\definecolor{Parhamcomment}{RGB}{0, 0, 255}
\definecolor{mygreen}{RGB}{0, 128, 0}
\definecolor{myred}{RGB}{255, 0, 0}
\definecolor{Parhamtext}{rgb}{0.0, 0.58, 0.71}
\definecolor{refers}{rgb}{1.0, 0.6, 0.4}
\definecolor{calcs}{rgb}{0.8, 0.6, 0.9}
\definecolor{ques}{rgb}{0.03, 0.91, 0.87}
\definecolor{late}{rgb}{1.0, 0.49, 0.0}
\def\acts{\mathrel{\reflectbox{$\righttoleftarrow$}}}
\newtheorem{theorem}{Theorem}[section]
\newtheorem{definition}[theorem]{Definition}
\newtheorem{proposition}[theorem]{Proposition}
\newtheorem{lemma}[theorem]{Lemma}
\newtheorem{remark}[theorem]{Remark}
\newtheorem{example}[theorem]{Example}
\newcommand{\R}{\mathbb{R}}
\newcommand{\C}{\mathbb{C}}
\newcommand{\E}{\mathbb{E}}
\renewcommand{\epsilon}{\varepsilon}
\renewcommand{\phi}{\varphi}
\newcommand{\lnorm}{\lvert \!\vert}
\newcommand{\rnorm}{\vert \! \rvert}
\DeclareMathOperator{\Id}{Id}
\renewcommand{\P}{\mathbb{P}}
\DeclareRobustCommand{\ammaG}{\text{\reflectbox{$\Gamma$}}}
\newcommand{\diff}{\mathop{}\!\mathrm{d}}
\DeclareMathOperator{\Cov}{Cov}
\DeclareMathOperator{\Crt}{Crt}
\newcommand{\injnorm}[1]{\lnorm #1 \rnorm_{\mathrm{inj}}}
\title{On the injective norm of random fermionic states and skew-symmetric tensors}
\author{Stephane Dartois}
\address{\'Ecole Polytechnique, Institut Polytechnique de Paris, Centre de Mathématiques Laurent Schwartz, Palaiseau, France}
\email{stephane.dartois@polytechnique.edu}
\author{Parham Radpay}
\address{Université Paris-Saclay, CEA, Palaiseau, France}
\email{parham.radpay@cea.fr}
\begin{document}
\maketitle
\begin{abstract}
We study the injective norm of random skew-symmetric tensors and the associated fermionic quantum states, a natural measure of multipartite entanglement for systems of indistinguishable particles. Extending recent advances on random quantum states, we analyze both real and complex skew-symmetric Gaussian ensembles in two asymptotic regimes: fixed particle number with increasing one-particle Hilbert space dimension, and joint scaling with fixed filling fraction. Using the Kac-Rice formula on the Grassmann manifold, we derive high-probability upper bounds on the injective norm and establish sharp asymptotics in both regimes. Interestingly, a duality relation under particle–hole transformation is uncovered, revealing a symmetry of the injective norm under the action of the Hodge star operator. We complement our analytical results with numerical simulations for low fermion numbers, which matches the predicted bounds.
\end{abstract}
\noindent{\bf Keywords:} Geometric entanglement, random fermionic states, Kac-Rice formula, random tensors, injective norm
\section{Introduction and main results}

\noindent{\bf Setup:} In this paper we study the injective norm of random skew-symmetric tensor $T\in \bigwedge^p \mathbb{K}^d\subseteq (\mathbb{K}^d)^{\otimes p}$, where $\mathbb{K}=\R$ or $\C$.  A skew-symmetric tensor is determined by a set of $d \choose p$ coordinates, as in fact, for $\{e_i\}_{i=1}^d$ a basis of $\mathbb{K}^d$,
$$T=\sum_{i_1,\ldots, i_p=1}^d T_{i_1,\ldots, i_p}e_{i_1}\otimes \ldots \otimes e_{i_p},$$
 where $T_{i_{\sigma(1)},\ldots,i_{\sigma(p)}}=\text{sg}(\sigma) T_{i_1,\ldots, i_p}$, for all $\sigma \in \mathfrak{S}_p$ the permutation group over $p$ elements, and $\text{sg}:\mathfrak{S}_p\to \{\pm 1\}$ the signature function. In particular, entries with two or more identical indices vanish. If $\mathbb{K}=\C$ to each $T$ we associate a quantum state of fermions $\ket{\psi_f}:=\frac{T}{\lnorm T\rnorm_{2}},$ where $\lnorm \cdot \rnorm_{2}$ denotes the Hilbert-Schmidt (or simply $2-$) norm, that is 
 $$\lnorm T\rnorm_{2}^2 = \sum_{i_1,\ldots, i_p=1}^d\lvert T_{i_1,\ldots, i_p}\rvert^2.$$
Furthermore, one has 
$$T=\sum_{i_1<\ldots<i_p}T_{i_1,\ldots, i_p} e_{i_1}\wedge\ldots\wedge e_{i_p},$$ 
where $\wedge$ denotes the wedge-product of vectors.  We are interested in the injective norm for both $T$ and $\ket{\psi_f}$ as defined by
\begin{equation}
    \injnorm{T}:=\max_{\substack{x_1, x_2, \ldots, x_p\in \mathbb{K}^d\\ \lnorm x_1\wedge x_2\wedge \ldots \wedge x_p\rnorm_2=1}}\lvert\langle T,x_1\wedge x_2\wedge \ldots \wedge x_p\rangle \rvert,
\end{equation}
$\lnorm x_1\wedge x_2\wedge \ldots \wedge x_p\rnorm_2$ is induced by the Hilbert-Schmidt norm  on $\mathbb{K}^d$ and the same definition applies to the quantum state $\ket{\psi_f}$
\begin{equation}
    \injnorm{\ket{\psi_f}}:=\max_{\substack{x_1, x_2, \ldots, x_p\in \mathbb{K}^d\\ \lnorm x_1\wedge x_2\wedge \ldots \wedge x_p\rnorm_2=1}}\lvert\braket{\psi_f}{ x_1\wedge x_2\wedge \ldots \wedge x_p}\rvert.
\end{equation}
In this context, the geometric measure of entanglement remains 
$$\text{GME}(\ket{\psi_f})=-2\log \injnorm{\ket{\psi_f}}.$$
\begin{remark}
This definition is the most natural since strictly separable skew-symmetric states do not exist. Moreover, it follows from the usual definition. In fact, for all $\sigma\in \mathfrak{S}_p$, the permutation group over $p$ elements, let $\sigma\acts (\mathbb{K}^d)^{\otimes p}$ by exchanging tensor factors, that is $\sigma:x_1\otimes\ldots\otimes x_p\mapsto x_{\sigma(1)}\otimes \ldots\otimes x_{\sigma(p)}$. Denote 
$$P_A:(\mathbb{K}^d)^{\otimes p}\to \bigwedge^p \mathbb{K}^d,\ P_A:=\frac1{k!}\sum_{\sigma\in \mathfrak{S}_p}\text{sg}(\sigma)\sigma$$ the projector onto the skew-symmetric subspace. Then, for all $T\in \bigwedge^p \mathbb{K}^d,$ $P_A(T)=T$. Therefore, coming back to the usual definition\cite{aubrun2017alice, dartois2024injective} of the injective norm one has, using the skew-symmetry of $T$,
\begin{align*}
\max_{x_1,\ldots, x_p\in \mathbb{K}^d,\lnorm x_i\rnorm_2=1} \lvert\langle T,x_1\otimes x_2\otimes \ldots \otimes x_p\rangle \rvert &=\lvert\langle T,x_1\otimes x_2\otimes \ldots \otimes x_p\rangle \rvert\\
&=\lvert\langle P_A(T),x_1\otimes x_2\otimes \ldots \otimes x_p\rangle \rvert\\
&=\lvert\langle T,P_A(x_1\otimes x_2\otimes \ldots \otimes x_p)\rangle \rvert=\lvert\langle T,x_1\wedge x_2\wedge \ldots \wedge x_p\rangle \rvert,
\end{align*}
by definition of the wedge product. 
Additionally, the set of simple wedge products has already been formalized as the natural set of separable states for fermions, see for instance \cite{grabowski2012segre}, where more generally, para-statistics are considered. 
\end{remark}

\noindent{\bf Motivations and main result: } Different notions of entanglement for multipartite states have been studied in the quantum information community. Among them, the geometric entanglement has been extensively investigated in the literature~\cite{shimony1995degree,wei2003geometric,aulbach2010maximally,aulbach2010geometric,orus2014geometric,aubrun2017alice,fitter2022estimating, steinberg2024finding,dartois2025injectivenormcssquantum}.  A central aim of this work is to extend the analysis performed in~\cite{dartois2024injective}, which determines (lower bounds on) the geometric measure of entanglement of random quantum states of distinguishable-particles, to random states of fermions. We treat two different scaling regimes:
\begin{enumerate}
    \item fixed-$p$ fermion number while letting the one-particle Hilbert space dimension $d\to \infty$.
    \item a double-scaling regime in which $p,d\to \infty$ while the filling fraction $\alpha=p/d$ of the random states is fixed asymptotically.
\end{enumerate}   
Following the approach of the first author in the article~\cite{dartois2024injective}, we study both real and complex skew-symmetric tensors. This puts our results within, and extends to multipartite fermionic entanglement, a line of work on bipartite entanglement measures for random fermionic states~\cite{BianchiKieburg2021,dartois2022entanglement,pastur2024entanglement,huang2023entropy}. \\

The main results of the paper are as follows:
\begin{theorem}
    Let $T$ be a skew-symmetric random tensor over the field $\mathbb{K}$, with i.i.d entries up to antisymmetry, with entries distributed as $\mathcal{N}_\R(0,1)$ in the case $\mathbb{K}=\R$ and $\mathcal{N}_\C(0,1)$ in the case $\mathbb{K}=\C$. And let $\ket{\psi_f}:=\frac{T}{\lnorm T\rnorm_{2}}$, where $\lnorm T\rnorm_{2}$ is the Hilbert-Schmidt norm of the tensor $T$. Then we  have for fixed $p$:
    \begin{align}
        \limsup_{d\rightarrow \infty}\frac1{p(d-p)}\log \P\left(\frac1{\sqrt{p(d-p)}}\injnorm{T}> E_0(p)+\epsilon\right)<0,
    \end{align}
    \begin{align}
        \limsup_{d\rightarrow \infty}\frac1{p(d-p)}\log \P\left(\frac{\injnorm{\ket{\psi_f}}}{\sqrt{p(d-p)}}>\frac1{d^{\frac{p}{2}}}(E_0(p)+\epsilon)\right)<0.
    \end{align}
    and for a double scaling regime where both $d,p \rightarrow \infty$, with $p =\lfloor \alpha  d \rfloor$ and a fixed $\alpha \in (0,1)$:
    \begin{align}
    \limsup_ {d \rightarrow \infty} \frac{1}{p(d-p)}\log \P\left( \frac{\injnorm{T}}{\sqrt{d^2\alpha(1-\alpha)}} >\gamma_\alpha(d)+ \frac{\epsilon}{\sqrt{\log d}} \right) < 0,
\end{align}
\begin{align}
    \limsup_{d\rightarrow \infty}\frac1{p(d-p)}\log \P\left(\frac{\injnorm{ \ket{\psi_f}}}{\sqrt{d^2\alpha(1-\alpha)}}> d^{-\frac{\alpha d}{2}} e^{\frac12(\alpha + (1-\alpha)\log(1-\alpha))d} (\gamma_\alpha(d)+\epsilon)\right)<0,
\end{align}
with $\gamma_\alpha(d)$ and $E_0(p)$ defined in \ref{lem: Asympgamma} and \ref{lem:annealed_complexity_real} respectively. 
\end{theorem}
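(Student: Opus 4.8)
The plan is to realize the injective norm as the supremum of a smooth Gaussian field on the Grassmann manifold and then to bound its upper tail by a first-moment (annealed) count of high critical points through the Kac--Rice formula. Any unit decomposable $p$-vector $\omega=x_1\wedge\ldots\wedge x_p$ is, up to a phase, the unit volume form $\omega_V$ of the $p$-plane $V=\mathrm{span}(x_1,\ldots,x_p)$, so that $\injnorm{T}=\max_{V\in \mathrm{Gr}(p,d)}\lvert\langle T,\omega_V\rangle\rvert$. In the real case this equals $\max_{\tilde V} h(\tilde V)$ with $h(\tilde V)=\langle T,\omega_{\tilde V}\rangle$ a centered Gaussian field on the oriented Grassmannian $\tilde{\mathrm{Gr}}(p,d)$, a compact manifold of dimension $p(d-p)$; in the complex case one optimizes additionally over the phase, giving a field on a circle bundle over $\mathrm{Gr}_{\C}(p,d)$. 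The scaling $\sqrt{p(d-p)}$ is precisely the square root of this dimension, the natural energy scale for the extreme value of a field on such a manifold.

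First I would use that the maximizer of a smooth field on a compact manifold is a critical point, so by Markov's inequality
\[
\P\!\left(\max h> u\right)\le \P\big(\Crt(u)\ge 1\big)\le \E\,\Crt(u),
\]
where $\Crt(u)$ counts critical points of $h$ with value exceeding $u$. The Kac--Rice formula expresses $\E\,\Crt(u)$ as an integral over the manifold of $\E\big[\lvert\det\nabla^2 h\rvert\,\mathbf 1\{h>u\}\mid \nabla h=0\big]$ weighted by the density of $\nabla h$ at $0$. Because the orthogonal (resp.\ unitary) group acts transitively by isometries on the Grassmannian and preserves the law of $T$, the integrand is independent of the base point, so $\E\,\Crt(u)=\Vol\cdot(\text{single-point integrand})$. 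At a critical point the conditioned Hessian is, after a standard invariant reduction, a GOE/GUE-type random matrix shifted by a multiple of the identity set by the level $u$ (the shift coming from the curvature of the Grassmannian), so the conditional Hessian expectation becomes an expected absolute characteristic polynomial of such a matrix.

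The heart of the argument is the asymptotic evaluation of $\frac{1}{p(d-p)}\log\E\,\Crt(u)$ as $d\to\infty$. This is exactly the annealed complexity whose threshold is computed in Lemma~\ref{lem:annealed_complexity_real} (fixed $p$) and Lemma~\ref{lem: Asympgamma} (double scaling): the log-volume term and the large-deviation rate of the expected log-determinant combine into a variational expression in the rescaled level $E=u/\sqrt{p(d-p)}$, and $E_0(p)$ (resp.\ $\gamma_\alpha(d)$) is defined as the value of $E$ at which this complexity vanishes. For $E>E_0(p)+\epsilon$ the complexity is strictly negative, hence $\frac{1}{p(d-p)}\log\E\,\Crt(u)$ tends to a negative limit, and the Markov bound above transfers this exponential decay to the probability, giving the first and third displays; the $\epsilon/\sqrt{\log d}$ cushion in the double-scaling bound is the correct fluctuation scale for the $d$-dependent threshold $\gamma_\alpha(d)$ and absorbs the subexponential corrections. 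The main obstacle is precisely this random-matrix step: identifying the exact Hessian ensemble on $\mathrm{Gr}(p,d)$ and controlling the large-deviation rate of its log-determinant uniformly in the level, which is where the bulk of the technical work resides and where the two Lemmas are invoked (separately for $\mathbb{K}=\R$ and $\mathbb{K}=\C$).

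Finally, the state bounds follow from the tensor bounds together with concentration of $\lnorm T\rnorm_2$. Since $\ket{\psi_f}=T/\lnorm T\rnorm_2$ we have $\injnorm{\ket{\psi_f}}=\injnorm{T}/\lnorm T\rnorm_2$, so it suffices to bound $\lnorm T\rnorm_2$ from below with overwhelming probability. As $\lnorm T\rnorm_2^2=\sum_{i_1,\ldots,i_p}\lvert T_{i_1\ldots i_p}\rvert^2=p!\sum_{i_1<\ldots<i_p}\lvert T_{i_1\ldots i_p}\rvert^2$ is a multiple of a sum of $\binom{d}{p}$ i.i.d.\ squared Gaussians, it concentrates around $\E\lnorm T\rnorm_2^2=p!\binom{d}{p}=d!/(d-p)!$. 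Taking square roots gives $\lnorm T\rnorm_2\sim d^{p/2}$ for fixed $p$ and, via Stirling, $\lnorm T\rnorm_2\sim d^{\alpha d/2}e^{-\frac12(\alpha+(1-\alpha)\log(1-\alpha))d}$ in the double-scaling regime; dividing the tensor bounds by this typical value reproduces exactly the prefactors $d^{-p/2}$ and $d^{-\alpha d/2}e^{\frac12(\alpha+(1-\alpha)\log(1-\alpha))d}$ of the second and fourth displays. A standard $\chi^2$/Gaussian concentration inequality renders the deviation of $\lnorm T\rnorm_2$ from its typical value negligible on the exponential scale $p(d-p)$, so a union bound combining the numerator and denominator estimates completes the proof.
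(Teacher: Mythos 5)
Your overall skeleton --- realizing $\injnorm{T}$ as the maximum of a Gaussian field on the Grassmannian, bounding the upper tail by the expected number of high critical points via Markov and Kac--Rice, using homogeneity of the integrand to factor out the volume, and dividing by the concentrated value of $\lnorm T\rnorm_2$ (via $\chi^2$ concentration and Stirling) to get the state bounds --- is exactly the paper's strategy, and your treatment of the normalization step is correct. However, there is a genuine gap at the step you yourself flag as the heart of the matter: you assert that the conditioned Hessian is ``a GOE/GUE-type random matrix shifted by a multiple of the identity,'' and this is false. The paper shows (Proposition \ref{prop:upper-KR}, Definition \ref{def:BHGAE}, and the covariance computation of Appendix \ref{AppB}) that the relevant ensemble is $u-W_N$ with $W_N$ a \emph{block hollow} Gaussian antisymmetric matrix: an $N\times N$ matrix, $N=p(d-p)$, partitioned into $p^2$ blocks of size $(d-p)\times(d-p)$, whose diagonal blocks vanish identically and whose off-diagonal blocks are skew-symmetrized Gaussian blocks. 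This is not a cosmetic difference: the limiting spectral measure of $W_N$ is a semicircle of radius $2\sqrt{\tfrac{p-1}{p}}$ rather than $2$, and the complexity function of Lemma \ref{lem:annealed_complexity_real} is $\Sigma^{(p)}_\R(u)=\frac{1+\log p}{2}-\frac{u^2}{2}+\Omega\bigl(u\sqrt{\tfrac{p-1}{p}}\bigr)+\frac12\log\tfrac{p-1}{p}$, whose zero defines $E_0(p)$. At fixed finite $p$ the $(p-1)/p$ corrections do not vanish, so running your argument with a genuine GOE/GUE Hessian produces a quantitatively wrong threshold; the numerical values $\alpha(p)=\sqrt{p}\,E_0(p)$ reported in Section 5 would not match.

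Relatedly, you cannot lean on Lemmas \ref{lem:annealed_complexity_real} and \ref{lem: Asympgamma} to ``control the large-deviation rate of the log-determinant'': those lemmas only define $E_0(p)$ and $\gamma_\alpha(d)$ as zeros of explicit functions. The passage from $\E_{\text{BHGAE}}\lvert\det(u-W_N)\rvert$ to its exponential asymptotics, uniformly in the level, is obtained by invoking the exponential-concentration machinery of \cite{arous2022exponential}, whose hypotheses must be verified for this nonstandard ensemble; that verification is the technical core of the paper (Propositions \ref{prop:operator_norm_bound} and \ref{prop:assumptions_check}): an operator-norm bound from matrix concentration \cite{bandeira2023matrix}, a Herbst-type Lipschitz-trace concentration, a local law derived from the Matrix Dyson Equation for Kronecker random matrices \cite{alt2019location}, and a spectral gap assumption. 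The gap assumption genuinely fails for arbitrary $u$ because of the structural zeros in $W_N$ (no smallest-singular-value anti-concentration is available for hollow ensembles); the paper circumvents this by restricting to levels $u$ in compact subsets of $(E_0(p),\infty)$ and conditioning on the event $\lnorm W_N\rnorm\le 2\sqrt{\tfrac{p-1}{p}}+\delta_{\mathcal K}$, on which the Hessian spectrum is deterministically bounded away from zero. Without identifying the correct ensemble and supplying these ingredients, your ``standard invariant reduction'' step does not go through, and neither display can be concluded.
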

Interestingly, we also uncover a duality relation for the geometric measure of entanglement of fermionic states. The GME, like the injective norm, is invariant under particle-hole or Hodge duality. This duality is recovered in the Kac-Rice integral and translates in the particular block matrix structure of the random Hessian. In computing the limiting injective norm, we recover this duality in terms of $\alpha$ as a symmetry of the entanglement measure under the transformation $\alpha \leftrightarrow 1-\alpha$. \\

From a random-matrix-to-random-tensors point of view, the injective norm of a random tensor generalizes the operator norm of a random matrix. It is therefore natural to study the behavior of the injective norm for different random tensor ensembles, as it is natural to study the behavior of the operator norm for different ensembles of random matrices. In the random matrix world, several symmetry classes are investigated \cite{mehta2004random,forrester2010log}: real asymmetric, real symmetric, real skew-symmetric \cite{kuriki2010distributions}, complex asymmetric, Hermitian. Those can display properties driven by their symmetry. In the tensor world however, results concentrate on the symmetric real or real tensors~\cite{nguyen2010tensor,auffinger2013random,tomioka2014spectral, bandeira2024geometric, boedihardjo2024injective}. But, symmetry-wise, the situation is richer. Symmetry classes of tensors of order $p$ are in bijection to irreducible representations of the symmetric group $\mathfrak{S}_p$. Hence, it is natural to investigate the properties of random tensors in various symmetry classes.
Under this light, the case of skew-symmetric Gaussian tensors is analogous, at least in the real case, to skew-symmetric Gaussian random matrices\footnote{which are themselves seldom studied, despite being a very natural Gaussian $O(N)$ invariant ensemble}. Our result can be seen as part of a program aimed at understanding the asymptotic properties of the injective norm of random tensors for all their possible symmetry classes.  To fully achieve such a program, one should certainly develop systematic methods to obtain asymptotically sharp bounds. While this remains an open direction, the present work provides another example of the computation of a (conjecturally) sharp upper bound for a non-symmetric (here, skew-symmetric) Gaussian tensor ensemble.\\

The study of the injective norm is closely related to the study of the ground state in certain disordered systems, notably spin glasses. The first steps in studying the complexity of these systems and consequently their ground states using the Kac-Rice formula were taken in \cite{auffinger2013random}. In \cite{auffinger2013random} the authors provided precise asymptotic account of critical points at given energy levels and revealed a complex structure near the ground state.\\
Building on these approaches and combining them with more recent random matrix results developed in \cite{arous2022exponential, mckenna2024complexity, bandeira2023matrix}, the work \cite{dartois2024injective} studied what, in the disordered systems world, could be considered the ground state of multispecies (eventually complex-valued) spin-glasses and interpreted it as an injective norm or geometric measure of entanglement, thus connecting quantum information with the former domains. High probability upper bounds were obtained in two asymptotic regimes in the case of non-symmetric real and complex tensors with independent Gaussian i.i.d. entries: the first regime being fixed-$p$ and $d\rightarrow \infty$ and the second being fixed-$d$ and $p\rightarrow \infty$. One of those upper bounds was later matched with a lower bound in the real case, fixed-$p$ and $d\rightarrow \infty$ regime in \cite{bates2025balanced}.\\

From the point of view of Kac-Rice formula, and the landscape complexity program, one novel aspect of our work is that we use the formula on the Grassmann manifold $\text{Gr}(p,d)$. This is one of several reasons why the relevant Gaussian process escapes the conditional results of \cite{Sub2023} coming from the spin glass literature in the real case (and even more so in the complex case) as was already the case of the earlier work in \cite[Remark 3.9]{dartois2024injective}. It also escapes more recent results such as \cite{bates2025balanced, stojnic2025ground}.\\

In parallel \cite{bandeira2024geometric} used asymptotic geometric analysis to produce upper bounds on $\ell_p$-injective norms and general structured Gaussian tensors. Additionally, \cite{boedihardjo2024injective} provided bounds for the injective norm of random Gaussian tensors with independent entries having a variance profile. 
Both works extend well-known random matrix results to random tensors.  \\

Very recently, the physics work \cite{delporte2025real} studied what they call the (signed) eigenvalue distribution of random real skew-symmetric tensors using quantum field theory related methods in the large one-particle Hilbert spaces dimension regime. This provides  heuristics for the typical value of the injective norm in this regime. This very recent work can be seen as extending the previous physics work \cite{sasakura2024signed}.\\

\noindent{\bf Organization:} The paper is organized in a way that mostly mimics the organization of \cite{dartois2024injective}. In fact, the proof of our main theorem is built around very similar ideas. In Section 2, we introduce the definitions related to quantum information and random matrix theory. We also present some fundamental properties of the defined concepts, which we use frequently in the paper. In Section 3, we prove the results for the real case in both asymptotic regimes. Section 4 deals with the proofs in the complex case. The proofs follow the line of logic of the real case, with only small differences. All proofs in these two sections rely heavily on \cite{alt2019location,arous2022exponential,bandeira2023matrix,dartois2024injective}. Finally, in Section 5 we present some numerical simulations that match our analytical results in the first asymptotic regime. This section can be seen as an extension of the work \cite{fitter2022estimating}. The appendix contains details about the geometric properties of the Grassmann manifold and the calculation of the correlation functions necessary in the Kac-Rice formula.
\section*{Acknowledgements}
S.D. is grateful to the Institut de Mathématiques de Bordeaux for their hospitality, where part of this work was carried out. The work of S.D. was partly supported by the ANR grants ANR-25-CE40-1380 and ANR-25-CE40-5672.
\section{Preliminaries}
\subsection{Random matrix models and their properties}
We here recall the definition of the BHGOE ensemble introduced in \cite{dartois2024injective}. Building on that we introduce the BHGAE ensemble and show that their operator norm is bounded from above.
\begin{definition}[Injective norm of a skew-symmetric tensor]Let $\mathbb{K}$ be $\C$ or $\R$ for the remainder of the paper.
The injective norm of a skew-symmetric tensor  $T \in \left( \mathbb{K}^d \right)^{\otimes p}$ can be defined by:
\begin{align} \label{eq:Injective_norm}
\injnorm{{T}}&=\max_{\substack{x^{(1)},\ldots, x^{(p)}\\ \lnorm x^{(1)}\wedge \ldots \wedge x^{(p)}\rnorm_2=1}}|\langle T, x^{(1)}\wedge \ldots  \wedge x^{(p)}\rangle|=\max_{\substack{x^{(1)},\ldots, x^{(p)}\\ \lnorm x^{(1)}\wedge \ldots \wedge x^{(p)}\rnorm_2\le1}}|\langle T, x^{(1)}\wedge \ldots\wedge x^{(p)}\rangle|\\ \nonumber &=\max_{\substack{x^{(1)},\ldots, x^{(p)}}}\frac1{\lnorm x^{(1)}\wedge \ldots \wedge x^{(p)}\rnorm_2}|\langle T, x^{(1)}\wedge \ldots \wedge x^{(p)}\rangle|,
\end{align}  
where $\lnorm x^{(1)} \wedge \dots \wedge x^{(p)} \rnorm$ is shown to be: 
\begin{align}
    \lnorm x^{(1)} \wedge \dots \wedge x^{(p)} \rnorm_2^2 = |\det(X^tX)|
\end{align}
with $X$ being a $d \times p$ matrix with its columns given by $x^{(1)},x^{(2)}, \ldots,x^{(p)}$.
\end{definition}
\noindent To turn $T$ into a quantum state we normalize it by defining $\tilde{T}$:
\begin{align}
    \tilde{T} = \frac{T}{\lnorm T \rnorm_{2}},
\end{align}
with the Hilbert-Schmidt norm of the tensor given as $\lnorm T \rnorm_{2}^2=\sum_{i_1 , \dots , i_p} T_{i_1 , \dots , i_p}^2$. By linearity, it follows:
\begin{align}
    \injnorm{\tilde{T}} = \frac{\injnorm{T}}{\lnorm T \rnorm_{2}} .
\end{align}
Given a tensor $T$ we define the function $f_T: Gr(p,d) \rightarrow \R$ as:
\begin{align}
    f_T(x^{(1)}\wedge \ldots \wedge x^{(p)})=\frac{\langle T, x^{(1)}\wedge \ldots \wedge x^{(p)}\rangle}{\lnorm x^{(1)}\wedge \ldots \wedge x^{(p)}\rnorm_2}.
\end{align}
It is clear that the maximum of the function $f_T$ is the injective norm of the tensor $T$. 
In the asymptotic regime, it is more convenient to instead consider the normalized version:
\begin{align} \label{eq:definition_f}
    f_T(x^{(1)}\wedge \ldots \wedge x^{(p)})= \frac{1}{\sqrt{p(d-p)}}\frac{\langle T, x^{(1)}\wedge \ldots \wedge x^{(p)}\rangle}{\lnorm x^{(1)}\wedge \ldots \wedge x^{(p)}\rnorm_2}.  
\end{align}
 \\

\begin{definition}
    Let $T$ be a skew-symmetric complex p-tensor. A function $g_T: (\C^d)^p \rightarrow \R$ can be defined:
    \begin{align}
        g_T(x^1,\dots, x^p)= \frac{1}{\sqrt{2p(d-p)}} \frac{\Re{\langle T, x^{(1)} \wedge\dots \wedge x^{(p)}\rangle}}{{\lnorm x^{(1)}\wedge \ldots \wedge x^{(p)}\rnorm_2}}.
    \end{align}
\end{definition}
\noindent  Lemma \ref{eq-comp-def} shows that  the function $g_T$ can be used in place of the function $    f_T(x^{(1)}\wedge \ldots \wedge x^{(p)})$ in the complex setting.

\begin{lemma} \label{lem: Frob. norm concen.}In the real case, the squared Hilbert-Schmidt norm of a skew-symmetric tensor $T$ is distributed as $p!$ times a $\chi^2$ random variable with ${d \choose p}$ degrees of freedom. In the complex case, it has the same law as $\frac{p!}{2}$ times a $\chi^2$-distribution with $2 {d \choose p}$ degrees of freedom. For any $x>0$, the following concentration inequalities hold for the real and complex case
\begin{align}
    \label{Frob-estimate}
    real \; case: \quad \P\left(\lnorm T \rnorm_{2}^2 \leq \frac{d!}{(d-p)!} - 2 \sqrt{\frac{{p!}d!}{(d-p)! }}x\right) \leq \exp(-x^2) \\ \nonumber
    complex \; case: \quad \P\left(\lnorm T \rnorm_{2}^2 \leq \frac{d!}{(d-p)!} -  \sqrt{2\frac{{p!}d!}{(d-p)! }}x\right) \leq \exp(-x^2)
\end{align}
In particular the mean $\frac{d!}{(d-p)!}$ behaves asymptotically as follows for the two cases of (fixed $p$, $d\rightarrow \infty$) and (fixed $\alpha\in(0,1),p=\lfloor \alpha d\rfloor$, $d\rightarrow \infty$):
\begin{align}
    \label{frob_concentrate}
    &(\text{fixed} \; p,\;d\rightarrow \infty) : \quad \E(\lnorm T \rnorm_{2}^2) = d^{p} {\left(1+o\left(\frac1d\right)\right)}\\  \nonumber
    &(\text{fixed} \; p=\lfloor \alpha d\rfloor ,\;d\rightarrow \infty) : \quad \E(\lnorm T \rnorm_{2}^2) =  d^{\alpha d}e^{-(\alpha +(1-\alpha)\log(1-\alpha))d} {\left(1+o\left(\frac1d\right)\right)}
\end{align}
\end{lemma}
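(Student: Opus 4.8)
The plan is to exploit skew-symmetry to rewrite $\lnorm T\rnorm_{2}^2$ as a sum of independent squared Gaussians, identify the resulting $\chi^2$ law, and then apply a standard $\chi^2$ tail bound. First I would use that a skew-symmetric tensor is determined by its $\binom{d}{p}$ strictly ordered entries $T_{i_1,\ldots,i_p}$ with $i_1<\cdots<i_p$: permuting such an index tuple produces entries of equal modulus, and entries with a repeated index vanish. Collecting the terms of $\sum_{i_1,\ldots,i_p}\lvert T_{i_1,\ldots,i_p}\rvert^2$ according to their underlying unordered index set therefore gives
\begin{align*}
\lnorm T\rnorm_{2}^2=p!\sum_{i_1<\cdots<i_p}\lvert T_{i_1,\ldots,i_p}\rvert^2,
\end{align*}
a sum of $\binom{d}{p}$ independent contributions. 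In the real case each $T_{i_1,\ldots,i_p}^2$ is a squared $\mathcal{N}_\R(0,1)$ variable, so the sum is a $\chi^2$ with $\binom{d}{p}$ degrees of freedom and $\lnorm T\rnorm_{2}^2\sim p!\,\chi^2_{\binom{d}{p}}$. In the complex case $\lvert T_{i_1,\ldots,i_p}\rvert^2$ has the law of $\tfrac12\chi^2_2$ (real and imaginary parts being independent $\mathcal{N}_\R(0,\tfrac12)$), so the sum is $\tfrac12\chi^2_{2\binom{d}{p}}$ and $\lnorm T\rnorm_{2}^2\sim\tfrac{p!}{2}\chi^2_{2\binom{d}{p}}$. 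Taking expectations gives the common mean $p!\binom{d}{p}=\tfrac{d!}{(d-p)!}$.

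For the concentration bounds I would invoke the Laurent--Massart lower-tail inequality for a $\chi^2$ variable $Z$ with $k$ degrees of freedom, $\P(Z\le k-2\sqrt{kx})\le e^{-x}$. In the real case, multiplying $Z\sim\chi^2_{\binom{d}{p}}$ by $p!$ sends $k=\binom{d}{p}$ to the mean $\tfrac{d!}{(d-p)!}$ and the deviation $2p!\sqrt{kx}$ to $2\sqrt{\tfrac{p!\,d!}{(d-p)!}}\sqrt{x}$, using $(p!)^2\binom{d}{p}=\tfrac{p!\,d!}{(d-p)!}$; the substitution $x\mapsto x^2$ then yields exactly the stated bound with $e^{-x^2}$ tail. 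The complex case is identical after rescaling $\chi^2_{2\binom{d}{p}}$ by $\tfrac{p!}{2}$, where the deviation becomes $\sqrt{2\tfrac{p!\,d!}{(d-p)!}}\sqrt{x}$ and the same substitution produces the second inequality.

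It remains to extract the asymptotics of the mean $\tfrac{d!}{(d-p)!}=\prod_{j=0}^{p-1}(d-j)$. For fixed $p$ I would factor out $d^p$ and expand $\prod_{j=0}^{p-1}(1-j/d)=1-\tfrac{p(p-1)}{2d}+O(1/d^2)$, so that the leading term is $d^p$. In the double-scaling regime $p=\lfloor\alpha d\rfloor$ I would take logarithms and apply Stirling's formula to $\log d!-\log(d-p)!$; the dominant terms $d\log d-(d-p)\log(d-p)-p$ simplify, using $d-p\sim(1-\alpha)d$, to $\alpha d\log d-(\alpha+(1-\alpha)\log(1-\alpha))d$, and exponentiating recovers the claimed form. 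The only real work is bookkeeping: tracking the complex normalization constants and controlling the Stirling remainder together with the floor in $p=\lfloor\alpha d\rfloor$, which I expect to be the most delicate --- though still routine --- point, since everything else reduces to the identity above and off-the-shelf $\chi^2$ tail estimates.
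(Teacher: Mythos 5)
Your proposal is correct and follows essentially the same route as the paper: the same decomposition $\lnorm T\rnorm_{2}^2 = p!\sum_{i_1<\cdots<i_p}\lvert T_{i_1,\ldots,i_p}\rvert^2$, the same $\chi^2$ identification in the real and complex cases, and the same Laurent--Massart lower-tail bound with the rescaling and $x\mapsto x^2$ substitution. The only cosmetic difference is in the asymptotics of the mean, where you expand the product directly (fixed $p$) and apply Stirling to the factorials (double scaling), while the paper sandwiches $\sum_{i=0}^{p-1}\log(d-i)$ between integrals; both computations give the identical result.
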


\begin{proof}
    In the real case, the Hilbert-Schmidt norm of a random skew-symmetric tensor is  a random variable  given as: 
    \begin{align}
        \lnorm T \rnorm^2_{2} = p! \sum_{i_1<\dots<i_p} {T}_{i_1,\dots,i_p}^2.
    \end{align}
    We can define the sum as  a random variable $\mathcal{S}=\sum_{i_1<\dots<i_p} {T}_{i_1,\dots,i_p}^2$, which is a $\chi^2$ distribution with $d \choose p$ degrees of freedom. After this, equation \eqref{Frob-estimate} and the identity $\E(\lnorm T \rnorm^2_F)= \frac{d!}{(d-p)!}$  follow directly from \cite[lemma 1]{laurent2000adaptive}. Using Stirling's formula in the  large-$d$ limit of the fixed-$p$ regime, the mean becomes:
    \begin{align}
        \lim_{d \rightarrow \infty} \E(\lnorm T \rnorm^2_{2}) = d^p\left(1+o\left(\frac1d\right)\right).
    \end{align}
   Furthermore, for the double-scaling behavior of $\E(\lnorm T \rnorm_{2}^2)=\E(\lnorm T\rnorm_{2}^2)=\prod_{i=0}^{p-1}(d-i)$, one obtains,
    \begin{align*}
    \log(\E(\lnorm T \rnorm_{2}^2))&=\sum_{i=0}^{p-1}\log(d-i)\le \int_{-1}^{p-1}\log(d-x)\mathrm{d}x\\
    &=\alpha d\int_0^1[\log d + \log(1+1/d-\alpha v)]\mathrm{d}v =\alpha d \log d+d\int_{1+1/d-\alpha}^{1+1/d}\log x \,\mathrm{d}x\\
    &=\left[\alpha d\log d-d(\alpha+(1-\alpha)\log(1-\alpha))\right](1+o(1)).
    \end{align*}
    A lower bound comes 
    \begin{align*}
        \sum_{i=0}^{p-1}\log(d-i)&\ge \int_0^p\log(d-x)\mathrm{d}x\\
        &=\alpha d\log d -d[\alpha+(1-\alpha) \log(1-\alpha)].
    \end{align*}
    Therefore, we obtain the asymptotic behavior, 
    \begin{equation}
        \E(\lnorm T \rnorm_{2}^2)= d^{\alpha d}e^{-(\alpha +(1-\alpha)\log(1-\alpha))d}\left(1+o\left(\frac1d\right)\right),
    \end{equation}
which proves equation \eqref{frob_concentrate} for the real case.\\
In the complex case, we have $\E(\lnorm T \rnorm_{2}^2) = \frac{p!}{2}\mathcal{S}_\C$ with $\mathcal{S}_\C$ being a $\chi^2$ distribution with $2 {d \choose p}$ degrees of freedom. The rest of the calculation is similar and leads to the same result \eqref{frob_concentrate}. Applying \cite[lemma 1]{laurent2000adaptive} yields the complex part of \eqref{Frob-estimate}. 
\end{proof}
\subsubsection{Properties of injective norm of fermions and their geometric entanglement}
The injective norm naturally leads to the definition of the geometric measure of entanglement (GME) as:
\begin{equation}
    \text{GME}(\ket{\psi_f})=-2\log(\lnorm \ket{\psi_f}\rnorm_{\text{inj}}).
\end{equation}
We now recall the definition of the Hodge dual
        \begin{align}\label{def:Hodge_dual}
    *: \bigwedge^p \mathrm{R}^d \rightarrow \bigwedge^{d-p} \mathrm{R}^d, \quad \ket{\psi}=\sum_{\sigma \in ([d])_p} \psi_\sigma e_{\sigma_1} \wedge ... \wedge e_{\sigma_p} \mapsto *\ket{\psi}=\sum_{\sigma \in ([d])_p} \psi_\sigma e_{\sigma^c_1} \wedge ... \wedge e_{\sigma^c_{d-p}}  
\end{align}
where $\sigma$ is an injective (i.e. non-repeating) $p$-tuple from the set $\{1, \ldots,d \}$. Since the order of the components of $\sigma $ is important, we only consider, without loss of generality, $\sigma$ such that $\sigma_1<\ldots< \sigma_p$. Furthermore, we assign to each $p$-tuple a $(d-p)$-tuple $\sigma^c$ and we require it to have an ordering such that $\{\sigma_1,\ldots,\sigma_p,\sigma^c_1\,\ldots,\sigma^c_{d-p}\}$ is an even permutation of $\{1,\ldots,d\}$. 
 
\begin{example} Let $\sigma =(1,3)$ and $\sigma \in {[4] \choose 2} $ then  $\sigma^c$ is ordered as $\sigma^c=(4,2)$ since $(1,3,4,2)$ is an even permutation of $(1,2,3,4)$.
\end{example}
\noindent Given a state of $p$ fermions $\psi_f$ the Hodge dual maps it to a state of $d-p$ fermions $\ket{\psi_h}=*\ket{\psi_f}$, by swapping the filled and empty dimensions.
\begin{proposition}[Geometric entanglement particle-hole duality]\label{prop:particle-hole-duality}
For all $\ket{\psi_f}\in \bigwedge^p\R^d$ 
\begin{equation}
 \injnorm{\ket{\psi_f}}=\injnorm{\ket{\psi_h}},
\end{equation}
which translates at the level of the Geometric Measure of Entanglement
\begin{equation}
    \text{GME}(\ket{\psi_f})=\text{GME}(\ket{\psi_h}).
\end{equation}
\end{proposition}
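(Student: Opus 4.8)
The plan is to derive the identity from two standard structural facts about the Hodge star on oriented Euclidean $\R^d$, where $\bigwedge^\bullet\R^d$ carries the inner product underlying $\lnorm\cdot\rnorm_2$ (the one for which $\lnorm x^{(1)}\wedge\dots\wedge x^{(p)}\rnorm_2^2=\det(X^tX)$, so that the ordered wedges $e_{\sigma_1}\wedge\dots\wedge e_{\sigma_p}$, $\sigma_1<\dots<\sigma_p$, form an orthonormal basis). The two facts are: (i) $*:\bigwedge^p\R^d\to\bigwedge^{d-p}\R^d$ is a linear isometry, i.e. $\langle *\alpha,*\beta\rangle=\langle\alpha,\beta\rangle$; and (ii) $*$ carries simple (decomposable) $p$-vectors bijectively onto simple $(d-p)$-vectors, sending a representative of a $p$-plane $V$ to a representative of $V^\perp$. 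Granting these, the result follows because the injective norm is scale-invariant in its argument and therefore only sees the $p$-plane spanned by $x^{(1)},\dots,x^{(p)}$.

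First I would record the reduction of the injective norm to a maximum over the Grassmannian. Since the ratio $|\langle\psi_f,x^{(1)}\wedge\dots\wedge x^{(p)}\rangle|/\lnorm x^{(1)}\wedge\dots\wedge x^{(p)}\rnorm_2$ is unchanged when $x^{(1)}\wedge\dots\wedge x^{(p)}$ is scaled, and any two nonzero simple $p$-vectors with the same span differ by a nonzero scalar, the ratio depends only on $V=\mathrm{span}(x^{(1)},\dots,x^{(p)})\in\mathrm{Gr}(p,d)$. Writing $\omega_V$ for any simple $p$-vector spanning $V$, this gives
\[
\injnorm{\ket{\psi_f}}=\max_{V\in\mathrm{Gr}(p,d)}\frac{|\langle\psi_f,\omega_V\rangle|}{\lnorm\omega_V\rnorm_2},\qquad \injnorm{\ket{\psi_h}}=\max_{W\in\mathrm{Gr}(d-p,d)}\frac{|\langle\psi_h,\omega_W\rangle|}{\lnorm\omega_W\rnorm_2}.
\]

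Next I would prove (i) and (ii). For (i) it suffices to check on the orthonormal basis: by the ordering convention fixed in \eqref{def:Hodge_dual}, $*$ sends $e_{\sigma_1}\wedge\dots\wedge e_{\sigma_p}$ to $e_{\sigma^c_1}\wedge\dots\wedge e_{\sigma^c_{d-p}}$, so it permutes the orthonormal basis of $\bigwedge^p\R^d$ onto that of $\bigwedge^{d-p}\R^d$ (the even-permutation convention making all signs $+1$), hence it is an isometry. For (ii), given $V$ with orthonormal basis $f_1,\dots,f_p$, complete it to a positively oriented orthonormal basis $f_1,\dots,f_d$ of $\R^d$; then $*(f_1\wedge\dots\wedge f_p)=\pm\, f_{p+1}\wedge\dots\wedge f_d$ is a simple $(d-p)$-vector spanning $V^\perp$ with $\lnorm *(f_1\wedge\dots\wedge f_p)\rnorm_2=\lnorm f_1\wedge\dots\wedge f_p\rnorm_2=1$. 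Since $V\mapsto V^\perp$ is a bijection $\mathrm{Gr}(p,d)\to\mathrm{Gr}(d-p,d)$, we may take $\omega_{V^\perp}=*\omega_V$.

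Finally I would combine the pieces: for each $V$, using (i) together with $\psi_h=*\psi_f$,
\[
\frac{|\langle\psi_h,\omega_{V^\perp}\rangle|}{\lnorm\omega_{V^\perp}\rnorm_2}=\frac{|\langle *\psi_f,*\omega_V\rangle|}{\lnorm *\omega_V\rnorm_2}=\frac{|\langle\psi_f,\omega_V\rangle|}{\lnorm\omega_V\rnorm_2}.
\]
Maximizing over $V$ and using that $V\mapsto V^\perp$ ranges over all of $\mathrm{Gr}(d-p,d)$ yields $\injnorm{\ket{\psi_f}}=\injnorm{\ket{\psi_h}}$, and the $\text{GME}$ identity follows by applying $-2\log(\cdot)$. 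The main obstacle is purely bookkeeping in steps (i)--(ii): verifying that the ordering convention of \eqref{def:Hodge_dual} makes $*$ a sign-free permutation of the orthonormal basis and is compatible with the normalization implicit in $\det(X^tX)$. Working throughout in the adapted orthonormal frame $f_1,\dots,f_d$ absorbs all signs into the absolute values and removes this difficulty.
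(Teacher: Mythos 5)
Your proposal is correct and takes essentially the same approach as the paper: both proofs rest on exactly the two facts you isolate, namely that the Hodge star is an isometry for the inner product and that it carries simple $p$-vectors to simple $(d-p)$-vectors representing orthogonal complements, so that the maximization problems for $\ket{\psi_f}$ on $\mathrm{Gr}(p,d)$ and for $\ket{\psi_h}$ on $\mathrm{Gr}(d-p,d)$ are identified. The only difference is presentational: you match the two objective functions under the bijection $V \mapsto V^\perp$ and maximize directly, whereas the paper transports the maximizer of $\ket{\psi_f}$ through $*$ and rules out a larger value for $\ket{\psi_h}$ by contradiction --- the same idea, with your write-up making the Grassmannian reduction and the isometry verification more explicit than the paper does.
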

\begin{proof}
      The Hodge dual preserves the Hermitian inner product up to a sign. Let $\ket{\psi_f}\in \bigwedge^p \C^d$ be a fermionic state and let $x_1\wedge\ldots\wedge x_p$ be chosen to be the set of vectors that maximize the function given in the definition of the injective norm of $\ket{\psi_f}$. \\ Denote $\ket{\psi_h}:=*\ket{\psi_f}$ the hole state dual to $\ket{\psi_f}$ in \eqref{eq:Injective_norm}.  Then one has 
      \begin{equation}
          \langle \psi_f, x_1\wedge\ldots \wedge x_p\rangle=\langle \psi_h,*(x_1\wedge \ldots \wedge x_p) \rangle.
      \end{equation}
    Denote $v=[x_1\wedge\ldots\wedge x_p]\in \text{Gr}(p,d)$. Assume there exists $\tilde {v} \in \text{Gr}(d-p,d), \, \tilde{v}\neq v$, represented by $y_1\wedge \ldots \wedge y_{d-p}\in \bigwedge^{d-p} \C^d$, so that $\lvert\langle \psi_f,x_1\wedge \ldots \wedge x_p\rangle\rvert < \lvert \langle \psi_h, y_1\wedge \ldots \wedge y_{d-p}\rangle\rvert$ then by isometry of the Hodge dual, we have
    $$\lvert \langle \psi_f, x_1\wedge\ldots \wedge x_p\rangle\rvert<\lvert \langle \psi_f, *(y_1\wedge \ldots \wedge y_{d-p})\rangle\rvert,$$
    where $[*(y_1\wedge \ldots \wedge y_{d-p})] \in \text{Gr}(p,d)$ which leads to a contradiction.   
\end{proof}
We now examine how the random function $f_T$ transforms under Hodge duality in a neighborhood of the "\textit{north pole}" of the Grassmannian. \footnote{For the exact definition of this reference point see Appendix \ref{AppA}.}\\ For this case, we first look at the transformation of the neighborhood itself under the action of Hodge star operator, that is $U \rightarrow *U$ with $U \subset Gr(p,d)$ and $*U \subset Gr(d-p,d)$. For a small enough $t$ the coordinates of a basis vector in the chart described in the Appendix \ref{AppA} can be displayed as:
\begin{align*}
M= \begin{pmatrix}
    I_p \\ 
    t E^{(d-p) \times p}_{ij}
\end{pmatrix},
\end{align*}
where $E^{(d-p) \times p}_{ij}$ is the $(d-p) \times p$-matrix with $1$ in the $ij$-th entry and zeros everywhere else. The point described by these coordinates corresponds to the space spanned by:
\begin{align*}
    e_1 \wedge ... \wedge (e_j+t e_{p+i}) \wedge ... \wedge e_p.
\end{align*}
This wedge product under the action of the Hodge operator is transformed to
\begin{align*}
    -e_{p+1} \wedge...\wedge e_{p+i-1} \wedge (e_{p+i}+te_j) \wedge e_{p+i+1} \wedge ...\wedge e_d ,
\end{align*}
which in the matrix form is presented as:
\begin{align*}
 M= \begin{pmatrix}
    t E^{p \times (d-p)}_{ji} \\ 
    I_{(d-p)}
\end{pmatrix}.
\end{align*}
Using the fact that Hodge operator is linear, it can be shown that the matrix $B \in \R^{(d-p) \times p} $ representing a point on the chart of $U$ is transformed into $B^T \in \R^{p \times (d-p)}$ on the chart of $*U$ under the pushforward of the Hodge duality and therefore the correlations of the functions and their derivatives as we calculated in Appendix \ref{AppB} are preserved under the action of the Hodge operator.  
\begin{definition}
A $\textup{BHGOE}(d,p,\sigma^2)$ random matrix is a $pd \times pd$ real-symmetric random matrix, thought of as being partitioned into $p^2$ blocks of size $d \times d$, with the diagonal blocks set to zero, and the remaining entries are independent up to symmetry, each distributed according to a normal distribution $\mathcal{N}(0,\sigma^2)$. In other words, such a matrix is a GOE matrix with the diagonal blocks zeroed out.
\end{definition}
For all $m,k \in \mathbb{Z}_+$, we define the partial transpose $\ammaG: \text{Mat}_{k}(\C)\otimes \text{Mat}_{m}(\C)\rightarrow \text{Mat}_{k}(\C)\otimes \text{Mat}_{m}(\C)$ as the map $\text{Id}\otimes t$ where $t:\text{Mat}_{m}(\C)\rightarrow \text{Mat}_{m}(\C)$ denotes the transpose of a matrix. 
%We use this definition to define the follo
Let $N=p(d-p)$ and let the family $E_{ij}^{(p)}$ be the standard matrix basis elements of $\text{Mat}_p(\C)$ (that is, $E_{ij}^{(p)}$ is the $p\times p$ matrix with a one at position $(i,j)$ and zeros everywhere else).
\begin{definition}[Block Hollow Gaussian Antisymmetric Ensemble]\label{def:BHGAE}
Let $d,p\in \mathbb{Z}_{>0}$ and let $A_N$ be a $N \times N$ $\text{BHGOE}(d,p,\sigma^2)$ random matrix. Then,
$$A_N=\sum_{1\le i < j \le p} (E_{ij}^{(p)}\otimes G_{i,j}+E^{(p)}_{ji}\otimes G_{i,j}^t),$$
where the $G_{i,j}$ are independent $d\times d$ random matrices whose elements are independent and distributed according to a normal distribution $\mathcal{N}(0,\sigma^2).$

We define $W_N$ to a be an $\text{BHGAE}(d,p,\sigma^2)$ as being the partial antisymmetrization of $A_N$
$$W_N:=\frac{1}{\sqrt{2}}(A_N-A_N^{\ammaG}),$$
where $A_N^{\ammaG}$ is the partial transpose of $A_N$, so that formally,
$$A_N^{\ammaG}=\sum_{1\le i < j \le p} (E_{ij}^{(p)}\otimes G_{i,j}^t+E^{(p)}_{ji}\otimes G_{i,j}).$$
\end{definition}
By construction, $W_N$ consists of $p^2$ blocks, with vanishing diagonal blocks. We index the elements of $W_N$ by pairs of doublets $(i,\alpha),(j,\beta)$, where $i,j$ determine the block and $\alpha, \beta$ the line and column inside a block. A given non trivial block $(i,j)$ is a skew-symmetric random matrix.  Its independent elements $(W_N)_{(i,\alpha),(j,\beta)}$ for $\alpha<\beta$ are $\mathcal{N}(0,\sigma^2)$ random variables, while the elements on the diagonal of a block vanish. \\ 
We use the work \cite{bandeira2023matrix} to obtain the following. The proof is very similar to what can be found in \cite[Lemma 2.3 \& Corollary 2.4]{dartois2024injective}
\begin{proposition}[Operator norm upper bound]\label{prop:operator_norm_bound}
Let $W_N$ be a $\text{BHGAE}(d-p,p,\frac{1}{p(d-p)})$ matrix. There exists a universal constant $C>0$ such that for all $t\ge 0$
\begin{equation}
   \P\left(\lnorm W_N \rnorm > 2\sqrt{\frac{p-1}{p}}+\frac{2C}{\sqrt{p(d-p)}}\sqrt{\frac{p-1}{p}\frac{d-p-1}{d-p}}(\log(p(d-p)))^{3/4}+Ct\right)\le e^{-t^2}. 
\end{equation}
This implies that for any fixed $\epsilon,\delta$ and $p$ fixed, there exists $d_0(\epsilon,\delta)$ so that
\begin{equation}
    \P\left(\lnorm W_N\rnorm> 2\sqrt{\frac{p-1}{p}}+\epsilon\right)\le e^{-N^{1-\delta}} \quad \forall d\ge d_0(\epsilon,\delta).
\end{equation}
In particular for $p \rightarrow \infty$
\begin{equation}
    \P\left( \lnorm W_N \rnorm > 2+\epsilon \right) \leq e^{-N^{1-\delta}} \quad \forall d \geq d_0(\epsilon,\delta).
\end{equation}
\end{proposition}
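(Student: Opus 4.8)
The plan is to follow the template of \cite[Lemma 2.3 \& Corollary 2.4]{dartois2024injective}, treating $W_N$ as a self-adjoint Gaussian matrix and feeding it into the non-commutative matrix concentration machinery of \cite{bandeira2023matrix}. First I would record the Gaussian-series representation that drops out of Definition \ref{def:BHGAE}: carrying out the partial antisymmetrization and writing $\sigma^2=\tfrac1{p(d-p)}$, one finds
$$W_N=\sum_{1\le i<j\le p}\left(E_{ij}^{(p)}-E_{ji}^{(p)}\right)\otimes S_{ij},$$
where the $S_{ij}$ are independent $(d-p)\times(d-p)$ skew-symmetric Gaussian matrices with entries of variance $\sigma^2$ above the diagonal. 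Since both $E_{ij}^{(p)}-E_{ji}^{(p)}$ and $S_{ij}$ are antisymmetric, the tensor product is symmetric, so $W_N$ is genuinely real-symmetric and the self-adjoint theory of \cite{bandeira2023matrix} applies directly.

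Next I would compute the two covariance parameters that drive the bound. A short second-moment computation gives $\E[S_{ij}^2]=-(d-p-1)\sigma^2 I_{d-p}$ and $(E_{ij}^{(p)}-E_{ji}^{(p)})^2=-(E_{ii}^{(p)}+E_{jj}^{(p)})$; cross-terms vanish by independence, so
$$\E W_N^2=(d-p-1)\sigma^2\sum_{i<j}\left(E_{ii}^{(p)}+E_{jj}^{(p)}\right)\otimes I_{d-p}=(p-1)(d-p-1)\sigma^2\,I_N.$$
Hence the strong variance parameter is $\sigma(W_N)=\|\E W_N^2\|^{1/2}=\sqrt{\tfrac{(p-1)(d-p-1)}{p(d-p)}}$, which is bounded above by $\sqrt{(p-1)/p}$ since $(d-p-1)/(d-p)\le 1$. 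A parallel and routine estimate of the weak variance parameter $\sigma_*(W_N)=\sup_{\|u\|=\|v\|=1}(\E|\langle u,W_N v\rangle|^2)^{1/2}$ gives $\sigma_*(W_N)=O(1/\sqrt{p(d-p)})$, and keeping the constant reproduces the precise coefficient $\tfrac{2}{\sqrt{p(d-p)}}\sqrt{\tfrac{p-1}{p}\tfrac{d-p-1}{d-p}}$ appearing in the statement.

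With these in hand I would invoke the main inequality of \cite{bandeira2023matrix} in the form $\E\|W_N\|\le 2\sigma(W_N)+C\,\sigma_*(W_N)(\log N)^{3/4}$ (the associated operator-valued semicircular model has norm at most $2\sigma(W_N)$), and then bound $2\sigma(W_N)\le 2\sqrt{(p-1)/p}$ to recover the deterministic part of the threshold. The tail is supplied by Gaussian concentration: as a function of the standard Gaussian coordinates, $X\mapsto\|X\|$ is Lipschitz with constant of order $\sigma_*(W_N)$, so that $\P(\|W_N\|>\E\|W_N\|+s)\le\exp(-s^2/(2\sigma_*(W_N)^2))$, which after reparametrizing the deviation by the natural scale $\sigma_*(W_N)$ yields the stated $e^{-t^2}$ bound. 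The two corollaries then follow by taking the deviation to be a fixed $\epsilon$: since $\sigma_*(W_N)(\log N)^{3/4}\to 0$ and $\sigma_*(W_N)=O(N^{-1/2})$, the sub-Gaussian tail at scale $N^{-1/2}$ gives probability $\le e^{-cN\epsilon^2}\le e^{-N^{1-\delta}}$ for $d$ large, while $2\sqrt{(p-1)/p}\to 2$ as $p\to\infty$ produces the final display.

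The main obstacle I anticipate is bookkeeping rather than conceptual: correctly computing $\sigma(W_N)$ and especially $\sigma_*(W_N)$ for the antisymmetrized, block-hollow ensemble, where the antisymmetrization couples the $(i,j)$ and $(j,i)$ blocks and forces within-block skew-symmetry, so the second-moment combinatorics differ from the plain BHGOE case; and verifying that the free-probability model is governed by $2\sigma(W_N)$ so that no hidden constant larger than $2\sqrt{(p-1)/p}$ slips into the leading term. Once the two parameters are pinned down, the concentration step and the passage to both asymptotic regimes are routine.
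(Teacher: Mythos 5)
Your plan follows the same route as the paper's own proof of Proposition \ref{prop:operator_norm_bound}: the Gaussian-series representation $W_N=\sum_{i<j}(E_{ij}^{(p)}-E_{ji}^{(p)})\otimes S_{ij}$ with skew-symmetric Gaussian blocks, the computation $\E W_N^2=(p-1)(d-p-1)\sigma^2 I_N$ giving $\sigma(W_N)^2=\frac{(p-1)(d-p-1)}{p(d-p)}\le\frac{p-1}{p}$, the Pisier/Lehner bound $\lnorm W_{\mathrm{free}}\rnorm\le 2\sigma(W_N)$ for the free model, matrix concentration from \cite{bandeira2023matrix}, and a Gaussian--Lipschitz tail. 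All of those computations are correct, and the deduction of the two $\epsilon$-corollaries from a vanishing correction term is also fine.

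The genuine gap is the inequality you propose to invoke: $\E\lnorm W_N\rnorm\le 2\sigma(W_N)+C\sigma_*(W_N)(\log N)^{3/4}$ is not the result of \cite{bandeira2023matrix}. In that reference (and as quoted in the paper's proof), the coefficient of $(\log N)^{3/4}$ is the parameter $\tilde{v}(X)$ built from the \emph{covariance norm} $v(X)^2=\lnorm\Cov(X)\rnorm$ of the entries, while $\sigma_*(X)$ multiplies only the deviation variable $t$ in the tail. Since one always has $\sigma_*(X)\le\sigma(X)$ and $\sigma_*(X)\le v(X)$, hence $\sigma_*(X)\le \sigma(X)^{1/2}v(X)^{1/2}$, the form you write is strictly stronger than the cited theorem and cannot simply be invoked; for this ensemble the discrepancy is a genuine power of $N$, because $\sigma_*(W_N)=\Theta(N^{-1/2})$ whereas $\sigma(W_N)^{1/2}v(W_N)^{1/2}=\Theta(N^{-1/4})$ (here $\sigma(W_N)=\Theta(1)$ and $v(W_N)=\Theta(N^{-1/2})$). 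The missing ingredient is exactly the computation the paper performs and you skip: each independent Gaussian of $W_N$ occupies four entries through the pattern $(E_{ij}^{(p)}-E_{ji}^{(p)})\otimes(E_{\alpha\beta}-E_{\beta\alpha})$, so $\Cov(W_N)$ is a multiple of an orthogonal projector (the paper writes this as $2\lnorm P\otimes Q\rnorm$ with $P,Q$ built from the antisymmetric basis elements), yielding $v(W_N)=\Theta(N^{-1/2})$. Plugging the correct parameter into \cite{bandeira2023matrix} gives a middle term of order $N^{-1/4}(\log N)^{3/4}$ rather than $N^{-1/2}(\log N)^{3/4}$; this still vanishes as $d\to\infty$, so the second and third displays — the only statements used downstream — survive unchanged, but the first display with its precise prefactor does not follow from the result as you have set it up. Your final step, Gaussian concentration for the $\sigma_*(W_N)$-Lipschitz function $\lnorm W_N\rnorm$ of the underlying Gaussian coordinates, is correct and is indeed how the concentration corollary in \cite{bandeira2023matrix} is obtained.
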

{
\begin{proof}
    We rely on the results \cite[corollary 2.2]{bandeira2023matrix} and recall them here for the computations to come. Let the parameters $\sigma$ and $v$ for a matrix $X$ be defined as:\\
\begin{align*}
    \sigma(X)^2=\lnorm\E(X^2)\rnorm \\
    v(X)^2=\lnorm\Cov(X)\rnorm,
\end{align*}\
then
    \begin{equation}
       p(d-p) \sigma(W_N)^2=(d-p-1)\left\lvert\!\left\lvert \sum_{1\le i<j\le p} (E_{ii}^{(p)}+E_{jj}^{(p)})\otimes I_{d-p}\right\rvert\!\right\rvert_{op}=(p-1)(d-p-1). 
    \end{equation}
    The covariance operator norm is given by
    \begin{equation}
        p(d-p)v(W_N)^2:=\left\lvert\!\left\lvert \text{Cov}(W_N)\right\rvert\!\right\rvert_{op}=2\left\lvert\!\left\lvert P\otimes Q\right\rvert\!\right\rvert_{op}, \label{projector_covariance}
    \end{equation}
    where $P:\C^p\rightarrow \C^p$ and $Q:\C^{d-p}\rightarrow \C^{d-p}$ are projectors, so that $P\otimes Q:\C^{p\times (d-p)}\rightarrow \C^{p\times (d-p)}$. The explicit form of $P$ and $Q$ is given below
    \begin{equation}
        P=\sum_{1\le i<j\le p}\alpha_{ij}^{(p)}\otimes \left(\alpha_{ij}^{(p)}\right)^*, \quad Q=\sum_{1\le k<l\le d-p}\alpha_{kl}^{(d-p)}\otimes \left(\alpha_{kl}^{(d-p)}\right)^*, 
    \end{equation}
    where $\alpha_{ij}^{(p)}=(E_{ij}^{(p)}-E_{ji}^{(p)})$ and $(\cdot)^*$ denotes the dual induced by the Frobenius scalar product of matrices.
    Using \cite[corollary 2.2]{bandeira2023matrix}, one has, using the notations introduced in \cite{bandeira2023matrix}, and denoting $W_{\text{free}}$ the non commutative model for $W_N$ (playing the role of $X_{\text{free}}$ for $X_N$ in \cite{bandeira2023matrix}) 
    $$\P(\lnorm W_N\rnorm_{op}\ge \lnorm W_{\text{free}}\rnorm+C \tilde{v}(W_N)(\log N)^{3/4}+Ct\sigma_*(W_N))\le e^{-t^2},$$
    for all $t\ge 0$ and $C$ a universal constant. We recall that (see \cite[page 10]{bandeira2023matrix})
    \begin{align}
        &\tilde{v}(W_N)=\sigma(W_N)v(W_N)=\frac{2}{\sqrt{p(d-p)}}\sqrt{\frac{p-1}{p}\frac{d-p-1}{d-p}} \\ \nonumber & \sigma_*(W_N)\le \sigma(W_N)=\sqrt{\frac{p-1}{p}\frac{d-p-1}{d-p}}.
    \end{align}
    According to the estimate \cite[lemma 2.5]{bandeira2023matrix}, whose first version is attributed to Pisier \cite[page 208]{Pisier_2003}, $\lnorm W_{\text{free}}\rnorm_{\text{op}}\le 2\sigma(W_N)\le2\sqrt{\frac{p-1}{p}}.$
     This leads to the following explicit bound for the operator norm of $W_N$
     \begin{equation}
         \P\left(\lnorm W_N \rnorm > 2\sqrt{\frac{p-1}{p}}+\frac{2C}{p(d-p)}\sqrt{\frac{p-1}{p}\frac{d-p-1}{d-p}}+Ct{ \sqrt{\frac{p-1}{p}\frac{d-p-1}{d-p}}}\right)\le e^{-t^2}.
     \end{equation}
\end{proof}}
\begin{definition}[Complex BHGAE] \label{def:complex BHGAE}
We define a complex block hollow Gaussian antisymmetric ensemble, $\text{cBHGAE}(p,d,\sigma^2)$, as given by the following block form:
$$V_N=\begin{pmatrix}
\begin{array}{c|c}
A & B \\ \hline
B & -A
\end{array}
\end{pmatrix}$$
where $A$ and $B$ are  independent $\text{BHGAE}(p,d,\sigma^2)$ matrices.
\end{definition}
\begin{proposition} \label{prop:complex_operator_bound}    
Let $V_N$ be a $\text{cBHGAE}(d-p,p,\frac{1}{2p(d-p)})$.
There exists a universal constant $C>0$ such that for all $t\ge 0$
\begin{equation}
   \P\left(\lnorm V_N \rnorm > 2\sqrt{\frac{p-1}{p}}+\frac{2C}{\sqrt{p(d-p)}}\sqrt{\frac{p-1}{p}\frac{d-p-1}{d-p}}(\log(p(d-p)))^{3/4}+Ct\right)\le e^{-t^2}. 
\end{equation}
Similarly, this implies that for any fixed $\epsilon,\delta$ and $p$ fixed, there exists $d_0(\epsilon,\delta)$ so that
\begin{equation}
    \P\left(\lnorm V_N \rnorm> 2\sqrt{\frac{p-1}{p}}+\epsilon\right)\le e^{-N^{1-\delta}} \quad \forall d\ge d_0(\epsilon,\delta).
\end{equation}
In particular, for $p \rightarrow \infty$
\begin{equation}
    \P\left( \lnorm V_N \rnorm > 2+\epsilon \right) \leq e^{-N^{1-\delta}} \quad \forall d \geq d_0(\epsilon,\delta)
\end{equation}
\end{proposition}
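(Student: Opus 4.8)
The plan is to mirror the proof of Proposition~\ref{prop:operator_norm_bound} for the real ensemble, reducing every parameter entering the matrix concentration inequality \cite[Corollary~2.2]{bandeira2023matrix} to the ones already computed for $W_N$, so that the very same numerical bound comes out. First I would record that $V_N$ is a self-adjoint $2N\times 2N$ Gaussian matrix and rewrite it in ``Pauli'' form
\begin{equation*}
V_N=\sigma_z\otimes A+\sigma_x\otimes B,\qquad \sigma_z=\left(\begin{smallmatrix}1&0\\0&-1\end{smallmatrix}\right),\quad \sigma_x=\left(\begin{smallmatrix}0&1\\1&0\end{smallmatrix}\right),
\end{equation*}
where $A,B$ are independent $\mathrm{BHGAE}(d-p,p,\tfrac{1}{2p(d-p)})$ matrices. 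The point of the halved variance $\tfrac{1}{2p(d-p)}$ (compared with $\tfrac1{p(d-p)}$ in the real case) will be exactly to offset the factors of $2$ produced by the doubled block structure.

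Next I would compute $\sigma(V_N)^2=\lnorm\E V_N^2\rnorm$. Using $\sigma_z^2=\sigma_x^2=I_2$ and $\sigma_z\sigma_x=-\sigma_x\sigma_z$ one gets $V_N^2=I_2\otimes(A^2+B^2)+\sigma_z\sigma_x\otimes[A,B]$, and since $A,B$ are independent and centered, $\E[A,B]=0$. Hence $\E V_N^2=I_2\otimes 2\,\E A^2$ and $\sigma(V_N)^2=2\sigma(A)^2$. As $\sigma(A)^2$ is half the value computed for $W_N$ in Proposition~\ref{prop:operator_norm_bound}, this gives $\sigma(V_N)^2=\sigma(W_N)^2=\tfrac{p-1}{p}\tfrac{d-p-1}{d-p}$; in particular the free-model bound $\lnorm V_{\mathrm{free}}\rnorm\le 2\sigma(V_N)\le 2\sqrt{(p-1)/p}$ follows from \cite[Lemma~2.5]{bandeira2023matrix} just as before.

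For the covariance parameter I would work with the operator $\Cov(V_N)(M)=\E[V_N M V_N]$ on $2N\times 2N$ matrices. Independence of $A,B$ again removes the cross terms, and writing a test matrix in $2\times2$ block form $M=(M_{rs})_{r,s=1}^2$ with $M_{rs}\in \mathrm{Mat}_N$, a short computation gives
\begin{equation*}
\Cov(V_N)(M)=\left(\begin{smallmatrix}\Phi(M_{11}+M_{22}) & \Phi(M_{21}-M_{12})\\[2pt] \Phi(M_{12}-M_{21}) & \Phi(M_{11}+M_{22})\end{smallmatrix}\right),\qquad \Phi:=\Cov(A).
\end{equation*}
Testing against $M=I_2\otimes M_\ast$, where $M_\ast$ is a leading eigenmatrix of $\Phi$, shows that $2\lnorm\Cov(A)\rnorm$ is an eigenvalue, and the block form shows no larger one occurs, so $v(V_N)^2=2v(A)^2=v(W_N)^2$. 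Therefore $\tilde v(V_N)=\sigma(V_N)v(V_N)=\tilde v(W_N)=\tfrac{2}{\sqrt{p(d-p)}}\sqrt{\tfrac{p-1}{p}\tfrac{d-p-1}{d-p}}$ and $\sigma_\ast(V_N)\le\sigma(V_N)\le1$. Since every quantity now equals its $W_N$ counterpart, feeding them into \cite[Corollary~2.2]{bandeira2023matrix} (and absorbing the harmless $\log 2$ coming from the dimension $2N$ into the universal constant $C$) reproduces the stated tail bound; the two asymptotic statements then follow by taking $t\sim N^{(1-\delta)/2}$ and letting $d\to\infty$, with $2\sqrt{(p-1)/p}\to2$ in the regime $p\to\infty$, exactly as in Proposition~\ref{prop:operator_norm_bound}.

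The main obstacle is the covariance computation, i.e.\ establishing the factor $\lnorm\Cov(V_N)\rnorm=2\lnorm\Cov(A)\rnorm$ rather than $\lnorm\Cov(A)\rnorm$. What must be checked is that the leading eigenspaces of the two independent contributions $\sigma_z\otimes A$ and $\sigma_x\otimes B$ reinforce rather than live in orthogonal sectors; the explicit block form above is what pins down this factor of $2$ and, together with the halved variance, guarantees that $V_N$ inherits verbatim the bound proved for the real ensemble $W_N$.
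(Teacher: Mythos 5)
Your strategy is the same as the paper's: decompose $V_N=\sigma_z\otimes A+\sigma_x\otimes B$ with $A,B$ independent $\mathrm{BHGAE}(d-p,p,\tfrac{1}{2p(d-p)})$ matrices, compute the parameters entering \cite[Corollary 2.2]{bandeira2023matrix}, check that they coincide with those of the real ensemble $W_N$, and import the real-case argument verbatim. Your computation of $\sigma(V_N)^2=2\sigma(A)^2=\frac{p-1}{p}\frac{d-p-1}{d-p}$ is correct (the anticommutation/independence argument is fine), as is the free bound $\lnorm V_{\mathrm{free}}\rnorm\le 2\sigma(V_N)$.

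The gap is in the covariance step. In \cite{bandeira2023matrix}, $v(X)^2=\lnorm\Cov(X)\rnorm$ is the norm of the \emph{entrywise} covariance matrix, i.e.\ of the operator $M\mapsto\E\left[X\,\langle X,M\rangle_{\mathrm{HS}}\right]$ on Hilbert--Schmidt space, not of the second-moment map $M\mapsto\E[XMX]$ that you adopt as your definition. These two operators have drastically different norms here. Writing $A=\sum_\mu g_\mu Z_\mu$ with $Z_\mu=\tfrac{1}{\sqrt{2p(d-p)}}\,\beta^{(p)}_{ij}\otimes\beta^{(d-p)}_{\alpha\beta}$, your map satisfies
\begin{equation*}
\E[A\,I_N\,A]=\E[A^2]=\frac{(p-1)(d-p-1)}{2p(d-p)}\,I_N,
\end{equation*}
so $I_N$ is an eigenmatrix of it with eigenvalue of order $1$; by contrast, since the $Z_\mu$ are mutually HS-orthogonal, the true parameter is $v(A)^2=\max_\mu\lnorm Z_\mu\rnorm_{\mathrm{HS}}^2=\frac{2}{p(d-p)}$. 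Had you honestly fed your $v$ into the inequality, the term $C\tilde v(V_N)(\log N)^{3/4}$ would diverge as $d\to\infty$ and no bound of the form $2\sqrt{(p-1)/p}+\epsilon$ could be extracted; the whole point of the proposition is that $v$ is of order $N^{-1/2}$. Your final identity $v(V_N)^2=2v(A)^2=v(W_N)^2$ is in fact the correct one, and the factor-of-two block mechanism does survive under the correct definition: the entrywise covariance acts as
\begin{equation*}
\Cov(V_N)(M)=\sigma_z\otimes\Cov(A)(M_{11}-M_{22})+\sigma_x\otimes\Cov(B)(M_{12}+M_{21}),
\end{equation*}
whose norm $2\lnorm\Cov(A)\rnorm$ is attained at $M=\sigma_z\otimes M_*$, with $M_*$ a top eigenmatrix of $\Cov(A)$. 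Note, however, that your test matrix $I_2\otimes M_*$ has $M_{11}-M_{22}=0$ and $M_{12}+M_{21}=0$, so it lies in the \emph{kernel} of the true $\Cov(V_N)$: your computation identifies the top eigenvalue of the wrong operator and lands on the right number only because you equated $\Phi=\E[A\cdot A]$ with $\Cov(A)$ by fiat. The fix is to redo the block computation with the entrywise covariance, which is exactly what the paper does (its covariance has the block structure with $\pm\Cov(A)$ and $\Cov(B)$ blocks, of operator norm $2\lnorm\Cov(A)\rnorm$); with that replacement the rest of your argument goes through.
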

\proof
The proof follows the one of  proposition \ref{prop:operator_norm_bound}. In particular the cBHGAE matrix can be represented as 
$$V_N=(E_{11}-E_{22})\otimes A + (E_{21}+E_{12})\otimes B. $$
Thus:
\begin{equation}
    \sigma(V_N)^2 = 2 \frac{1}{2 p (d-p)} (p-1)(d-p-1).
\end{equation}
Furthermore, the structure of $\Cov(V_N)$ is given by:
$$\Cov(V_N)=\begin{pmatrix}
\begin{array}{c|c}
\begin{matrix}
\Cov(A) & -\Cov(A) \\
-\Cov(A) & \Cov(A)
\end{matrix} & 0 \\ \hline
0 & 
\begin{matrix}
\Cov(B) & \Cov(B) \\
\Cov(B) & \Cov(B)
\end{matrix}
\end{array}
\end{pmatrix}$$
where $\Cov(A)$ and $\Cov(B)$ are given by equation \ref{projector_covariance}. Hence we have for $\Cov(V_N)$:
\begin{equation}
    \left\lvert\!\left\lvert \Cov(V_N)\right\rvert\!\right\rvert_{op} = 4\sigma^2
\end{equation}
which coincides with the real case, i.e. for $\text{BHGAE}(p,d-p,\frac{1}{p(d-p)})$, if divided by the 2 coming from the difference in the normalization factor. Hence, the rest follows exactly as in the proof of \ref{prop:operator_norm_bound}.
\endproof
\noindent Finally, let $\Sigma^{(p)}_\R(u): \R \rightarrow \R$ be  defined as: 
$$\Sigma^{(p)}_\R(u)=\frac{1+\log p}{2}-u^2/2+\Omega\left(u\sqrt{\frac{p-1}{p}}\right){+\frac12\log(\frac{p-1}{p})},$$
with the log-potential of the measure $\mu(x)= \frac{1}{2 \pi} \sqrt{(4-x^2)_+}$ defined by:
    \begin{align}
        \Omega_\mu(u) = \int_{\R} \log|u-\lambda| \dd \mu(\lambda).
    \end{align}
\noindent Then it has been shown:
\begin{lemma}\label{lem:annealed_complexity_real}
$\Sigma_\R^{(p)}(u)$ is strictly decreasing and has a unique solution $E_0(p)$ to $\Sigma_\R^{(p)}(E_0(p))=0$ on the positive real line.
\end{lemma}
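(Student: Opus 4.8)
The plan is to regard $\Sigma_\R^{(p)}$ as a smooth even function of $u$ and to reduce everything to the explicit log-potential of the semicircle law $\mu$. Writing $c \defeq \sqrt{(p-1)/p} \in (0,1)$ for $p \ge 2$, I would first rewrite
\[
\Sigma_\R^{(p)}(u) = \frac{1+\log p}{2} + \frac12\log\frac{p-1}{p} - \frac{u^2}{2} + \Omega_\mu(cu),
\]
and recall the two standard branches of $\Omega_\mu$: on the support $\Omega_\mu(x) = \frac{x^2}{4} - \frac12$ for $|x|\le 2$, and off the support $\Omega_\mu(x) = \frac{x^2}{4} - \frac{x\sqrt{x^2-4}}{4} + \log\frac{x+\sqrt{x^2-4}}{2} - \frac12$ for $x \ge 2$. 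These match to first order at $x=2$ and give $\Omega_\mu'(x) = x/2$ on $[0,2]$ and $\Omega_\mu'(x) = \frac{x-\sqrt{x^2-4}}{2}$ on $[2,\infty)$. Since $\Omega_\mu$ and $u^2/2$ are even, $\Sigma_\R^{(p)}$ is even and it suffices to argue on $[0,\infty)$.

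For monotonicity I would differentiate, obtaining $\frac{d}{du}\Sigma_\R^{(p)}(u) = -u + c\,\Omega_\mu'(cu)$, and then use the elementary bound $\Omega_\mu'(x) \le x/2$ valid for all $x \ge 0$ (equality on $[0,2]$, strict on $(2,\infty)$ because $\sqrt{x^2-4}>0$). This yields, for $u>0$,
\[
\frac{d}{du}\Sigma_\R^{(p)}(u) \le -u + c\cdot\frac{cu}{2} = -u\Bigl(1-\frac{c^2}{2}\Bigr) < 0,
\]
the last step being exactly where the skew-symmetric structure enters: $c^2 = (p-1)/p < 1$ forces $1 - c^2/2 > \tfrac12 > 0$. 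Hence $\Sigma_\R^{(p)}$ is strictly decreasing on $[0,\infty)$, with its global maximum at $u=0$.

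To pin down the zero, I would evaluate the endpoints. Using $\Omega_\mu(0) = -\tfrac12$,
\[
\Sigma_\R^{(p)}(0) = \frac{1+\log p}{2} - \frac12 + \frac12\log\frac{p-1}{p} = \frac12\log(p-1) > 0 \quad (p \ge 3),
\]
while the trivial bound $\Omega_\mu(x) \le \log(x+2)$ (from $\log|x-\lambda| \le \log(x+2)$ on $\supp\mu$) gives $\Sigma_\R^{(p)}(u) \le \mathrm{const} - \tfrac{u^2}{2} + \log(cu+2) \to -\infty$. Continuity together with strict monotonicity and the sign change then produce, via the intermediate value theorem, a unique $E_0(p) \in (0,\infty)$ with $\Sigma_\R^{(p)}(E_0(p)) = 0$.

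The only genuinely delicate point — and the part I would be most careful about — is the explicit form of the semicircle log-potential and the $C^1$ matching of its two branches at the edge $x=2$, which is what legitimizes the uniform bound $\Omega_\mu' \le x/2$ across the whole half-line; the rest is a routine sign analysis. I would also flag the boundary case $p=2$, where $\Sigma_\R^{(2)}(0) = \tfrac12\log 1 = 0$ places the root at the origin, so the statement should be read with $E_0(2)=0$ and the root being strictly positive exactly for $p \ge 3$.
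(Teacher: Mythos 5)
Your proof is correct. It is worth emphasizing that the paper itself does not prove this lemma at all: its ``proof'' is a one-line deferral to the literature (the analogous statements in \cite{dartois2024injective, mckenna2024complexity, auffinger2013random}), so your self-contained argument is genuinely different in presentation, even though it is essentially the standard computation underlying those references. Concretely, you supply what the citation hides: the two explicit branches of the semicircle log-potential, $\Omega_\mu(x)=\frac{x^2}{4}-\frac12$ on $[0,2]$ and $\Omega_\mu(x)=\frac{x^2}{4}-\frac{x\sqrt{x^2-4}}{4}+\log\frac{x+\sqrt{x^2-4}}{2}-\frac12$ on $[2,\infty)$, their $C^1$ matching at the edge (both one-sided derivatives equal $1$ at $x=2$), the uniform bound $\Omega_\mu'(x)\le x/2$ which combined with $c^2=(p-1)/p<1$ gives $\frac{d}{du}\Sigma_\R^{(p)}(u)\le -u\bigl(1-\tfrac{c^2}{2}\bigr)<0$ for $u>0$, and the endpoint values $\Sigma_\R^{(p)}(0)=\frac12\log(p-1)$ and $\Sigma_\R^{(p)}(u)\to-\infty$, so the intermediate value theorem closes the argument. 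All of these steps check out (including the value $\Omega_\mu(0)=-\frac12$ and the Stieltjes-transform identity $\Omega_\mu'(x)=\frac{x-\sqrt{x^2-4}}{2}$ off the support). Your caveat about $p=2$, where $\Sigma_\R^{(2)}(0)=0$ forces $E_0(2)=0$ rather than a strictly positive root, is also well taken and consistent with the paper, whose real fixed-$p$ theorem explicitly assumes $p\ge 3$; stating this boundary case makes the lemma's hypotheses more honest than the paper's version. The trade-off between the two routes is the usual one: the paper's citation is shorter and leans on results proved in greater generality, while your direct argument makes the lemma verifiable without chasing references and makes visible exactly where the factor $\sqrt{(p-1)/p}$ (the skew-symmetric normalization) enters the monotonicity.
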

\begin{proof}
     $\Sigma_\R^{(p)}(u)$ already appears in the literature, and similar lemma have been shown in for instance \cite{dartois2024injective, mckenna2024complexity,auffinger2013random}. We refer to those for the proof.
\end{proof}

\section{Real Case}
\noindent {\bf Kac-Rice formula:}
An extension of the Kac-Rice formula, obtained in the same way as in \cite{dartois2024injective}, allows us to prove 
\begin{proposition}\label{prop:upper-KR}
    Let $f_N$ be the Gaussian process described above. Then, for all $B$ a Borel set of $\R$,
    \begin{multline}\E(\Crt_{f_N}(B))\le \pi^{\frac12p(d-p)}\frac{\prod_{i=1}^p \Gamma(i/2)\prod_{i=1}^{d-p}\Gamma(i/2)}{\prod_{i=1}^d\Gamma(i/2)}\left(\sqrt{\frac{p(d-p)}{2\pi}}\right)^{p(d-p)}\\
    \times \int_B\sqrt{\frac{p(d-p)}{2\pi}}\diff u \, e^{-\frac{p(d-p)}{2}u^2}\E_{\text{BHGAE}}\left( \lvert\det(u-W_N) \rvert \right),
    \end{multline}
    where $W_N$ is a $\text{BHGAE}(d-p,p,\frac1{p(d-p)})$ random matrix as in definition \ref{def:BHGAE} and $\E_{\text{BHGAE}}$ denotes the average with respect to the randomness of $W_N$.
\end{proposition}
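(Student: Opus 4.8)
The plan is to apply the Kac--Rice formula, in its upper-bound form, to the smooth centered Gaussian field $f_N=f_T$ on the compact Riemannian manifold $\text{Gr}(p,d)$, and then to collapse the manifold integral to a single point by symmetry. Writing $N=p(d-p)$ for the dimension of the Grassmannian, the Kac--Rice bound reads
\begin{equation*}
\E(\Crt_{f_N}(B)) \le \int_{\text{Gr}(p,d)} \E\!\left[\,\lvert\det \nabla^2 f_N(x)\rvert\,\mathbb{1}_{f_N(x)\in B}\,\middle|\,\nabla f_N(x)=0\right] p_{\nabla f_N(x)}(0)\,\diff V(x),
\end{equation*}
where $\nabla f_N$ and $\nabla^2 f_N$ are the Riemannian gradient and Hessian, $p_{\nabla f_N(x)}(0)$ is the density of the gradient at the origin of $T_x\text{Gr}(p,d)$, and $\diff V$ is the Riemannian volume. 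The key structural input is that the law of $f_T$ is invariant under the transitive isometric action of $O(d)$ on the Grassmannian: since $T$ is a centered Gaussian tensor with $O(d)$-invariant covariance, the orthogonal group acts on $f_T$ by measure-preserving transformations. Hence the entire integrand is constant along the orbit, and I may evaluate it at the reference ``north pole'' $x_0=[e_1\wedge\dots\wedge e_p]$, pulling $\Vol(\text{Gr}(p,d))$ out of the integral.

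Second, I would compute $\Vol(\text{Gr}(p,d))$ for the metric induced by the Hilbert--Schmidt structure. Using $\Vol(\text{Gr}(p,d))=\Vol(O(d))/\big(\Vol(O(p))\Vol(O(d-p))\big)$ together with the standard volume of the orthogonal group, a short bookkeeping of the $\pi$-exponents (which sum to exactly $p(d-p)/2$) and of the Gamma factors produces precisely the prefactor $\pi^{\frac12 p(d-p)}\frac{\prod_{i=1}^p \Gamma(i/2)\prod_{i=1}^{d-p}\Gamma(i/2)}{\prod_{i=1}^d\Gamma(i/2)}$. The details of the chart and the induced metric are relegated to Appendix \ref{AppA}.

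Third, and this is where the main work lies, I would determine the joint Gaussian law of $\big(f_T(x_0),\nabla f_T(x_0),\nabla^2 f_T(x_0)\big)$ in the chart of Appendix \ref{AppA} using the correlation functions of Appendix \ref{AppB}. Expanding $f_T$ to second order in the chart coordinate $B\in\R^{(d-p)\times p}$, the zeroth-, first- and second-order coefficients are linear combinations of coordinates of $T$ indexed by exterior monomials differing from $e_1\wedge\dots\wedge e_p$ in zero, one and two positions respectively; these three families of entries are disjoint, so the value, gradient and Hessian are mutually independent. One then checks that $f_T(x_0)\sim\mathcal N(0,1/N)$, giving the density $\sqrt{N/(2\pi)}\,e^{-N u^2/2}$; that the gradient is isotropic, $\nabla f_T(x_0)\sim\mathcal N(0,\tfrac1N I_N)$, giving $p_{\nabla f_N(x_0)}(0)=\big(\sqrt{N/(2\pi)}\big)^{N}$; and that, conditioned on $f_T(x_0)=u$, the Hessian equals $u\,I_N-W_N$, where the diagonal shift $u\,I_N$ arises from differentiating the normalization $\lnorm\cdot\rnorm_2^{-1}$ in the denominator of $f_T$ and $W_N$ is the fluctuating bilinear part. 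The antisymmetry of $T$ forces the intra-factor second derivatives to vanish, which is exactly the block-hollow antisymmetric structure of Definition \ref{def:BHGAE}, and matching variances identifies $W_N$ with a $\text{BHGAE}(d-p,p,\tfrac1N)$ matrix independent of $u$.

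Finally, substituting these laws and using independence to factor the conditional expectation, the indicator $\mathbb{1}_{f_N\in B}$ contributes $\int_B \sqrt{N/(2\pi)}\,e^{-N u^2/2}\diff u$ while the determinant contributes $\E_{\text{BHGAE}}\lvert\det(u-W_N)\rvert$, and multiplying by the volume and gradient-density factors assembles the stated bound. I expect the main obstacle to be the third step: carrying out the second-order Taylor expansion of the normalized field in the Grassmannian chart and extracting the exact covariance structure — in particular the isotropy and variance $1/N$ of the gradient, the $u\,I_N$ shift in the Hessian, and the independence across the three index-classes — all with the bookkeeping of the exterior-algebra inner product. The symmetry reduction and the volume computation, though they require care with the induced metric, are otherwise standard.
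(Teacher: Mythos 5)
Your proposal is correct and follows essentially the same route as the paper, whose own proof simply defers to the Kac--Rice argument of \cite{dartois2024injective} together with the Grassmannian-specific ingredients you spell out: homogeneity under the $O(d)$ action (Lemma \ref{lem:homogeneity}), the volume formula of Lemma \ref{Volumeapp} giving the $\pi^{\frac12 p(d-p)}$ Gamma-ratio prefactor, and the Appendix \ref{AppB} correlations identifying the value density, the isotropic gradient of variance $1/(p(d-p))$, and the conditional Hessian $\pm(u-W_N)$ with $W_N$ a $\text{BHGAE}(d-p,p,\frac{1}{p(d-p)})$ matrix. Your third step, including the independence of the three families of tensor entries and the origin of the $u\,I_N$ shift from the normalization, is exactly the content the paper relegates to its appendices.
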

\begin{proof}
The proof of the proposition is very similar to the one given in the appendix of \cite{dartois2024injective}. The only difference arises from the particularities of the geometry of the Grassmann manifold. Some considerations regarding the specific geometry of the Grassmannian and its volume are shown in the appendix \ref{AppA} and in particular lemma \ref{Volumeapp}. 
\end{proof}
\subsection{Real, fixed $p$, $d \rightarrow \infty$}
\begin{theorem}[Main theorem - real case]
Let $p\ge 3$ be a fixed integer and let $T\in \bigwedge^p\R^d$ be a random skew-symmetric tensor with i.i.d. $\mathcal{N}_\R(0,1)$ entries. Then: 
$$\limsup_{d\rightarrow \infty}\frac1{p(d-p)}\log \P\left(\frac1{\sqrt{p(d-p)}}\injnorm{T}> E_0(p)+\epsilon\right)<0.$$
Defining the normalized state $\ket{\psi_f}:=\frac{T}{\lnorm T\rnorm_{2}}$. Then:
$$\limsup_{d\rightarrow \infty}\frac1{p(d-p)}\log \P\left(\frac{\injnorm{\ket{\psi_f}}}{\sqrt{p(d-p)}}>\frac1{d^{\frac{p}{2}}}(E_0(p)+\epsilon)\right)<0.$$
\end{theorem}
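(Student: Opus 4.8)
The plan is to run the first-moment (Kac--Rice) method on the Gaussian process $f_N:=f_T$ on $\mathrm{Gr}(p,d)$ of equation \eqref{eq:definition_f}, whose range of absolute values has maximum exactly $\injnorm{T}/\sqrt{p(d-p)}$. First I would observe that if $\max_{\mathrm{Gr}(p,d)}|f_N|$ exceeds a level $u$, then $f_N$ possesses at least one critical point whose critical value lies in $[u,\infty)$ or in $(-\infty,-u]$; since the law of $T$ is invariant under $T\mapsto -T$, these two expected counts coincide, and Markov's inequality yields
$$\P\!\left(\tfrac{1}{\sqrt{p(d-p)}}\injnorm{T} > u\right)\le 2\,\E\big(\Crt_{f_N}([u,\infty))\big).$$
It therefore suffices to prove that $\E(\Crt_{f_N}([u,\infty)))$ decays exponentially in $N=p(d-p)$ for the choice $u=E_0(p)+\epsilon$.

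Next I would insert $B=[u,\infty)$ into the Kac--Rice upper bound of Proposition \ref{prop:upper-KR} and estimate the three factors separately in the fixed-$p$, $d\to\infty$ limit: (i) the Gamma-function prefactor, treated by Stirling, contributes the constant $\tfrac{1+\log p}{2}$ together with the $\tfrac12\log\tfrac{p-1}{p}$ term while absorbing the Grassmannian volume normalization; (ii) the Gaussian weight $e^{-\frac{N}{2}u^2}$ contributes $-u^2/2$; and (iii) the determinant moment $\E_{\mathrm{BHGAE}}|\det(u-W_N)|$ contributes, to exponential order in $N$, the log-potential term $\Omega\!\left(u\sqrt{\tfrac{p-1}{p}}\right)$. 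Collecting these with the bookkeeping dictated by the definition of $\Sigma_\R^{(p)}$ gives
$$\limsup_{d\to\infty}\tfrac1N\log\E\big(\Crt_{f_N}([u,\infty))\big)\le \Sigma_\R^{(p)}(u).$$
Since $\Sigma_\R^{(p)}$ is strictly decreasing with unique positive zero $E_0(p)$ by Lemma \ref{lem:annealed_complexity_real}, we obtain $\Sigma_\R^{(p)}(E_0(p)+\epsilon)<0$, which proves the first display of the theorem.

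I expect step (iii) to be the main obstacle, as $W_N$ is the non-standard BHGAE ensemble rather than a GOE. Since only an upper bound is needed, I would control $\E|\det(u-W_N)|$ by splitting on the operator-norm tail of Proposition \ref{prop:operator_norm_bound}, which confines the spectrum of $W_N$ to a neighborhood of $[-2\sqrt{(p-1)/p},\,2\sqrt{(p-1)/p}]$ outside an event of probability at most $e^{-N^{1-\delta}}$: on the good event the convergence of the empirical spectral distribution of $W_N$ to the rescaled semicircle law $\mu$ gives $\tfrac1N\log|\det(u-W_N)|\to \Omega(\cdot)$, while on the rare complementary event a crude deterministic bound on the determinant renders its contribution negligible. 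This is also where it matters that $u=E_0(p)+\epsilon$ lies strictly above the spectral edge $2\sqrt{(p-1)/p}$ (the hypothesis $p\ge 3$ ensures this), so that the integrand $e^{N\Sigma_\R^{(p)}(\cdot)}$ is decreasing on $[u,\infty)$ and a Laplace estimate pins the exponential rate to the endpoint value $\Sigma_\R^{(p)}(u)$.

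Finally, for the normalized state I would use $\injnorm{\ket{\psi_f}}=\injnorm{T}/\lnorm T\rnorm_{2}$ together with the Hilbert--Schmidt concentration of Lemma \ref{lem: Frob. norm concen.}, which gives $\lnorm T\rnorm_{2}^2\ge d^{p}(1+o(1))$ outside an event of probability at most $e^{-x^2}$. Intersecting this lower bound on $\lnorm T\rnorm_{2}$ with the tensor estimate already proved, and choosing the deviation parameter $x$ so that both failure probabilities remain $o(1)$ relative to the scale $e^{-cN}$, converts the threshold $E_0(p)+\epsilon$ for $\injnorm{T}/\sqrt{p(d-p)}$ into the stated threshold $d^{-p/2}(E_0(p)+\epsilon)$ for $\injnorm{\ket{\psi_f}}/\sqrt{p(d-p)}$, which completes the proof.
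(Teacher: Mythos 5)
Your overall architecture coincides with the paper's: the Kac--Rice upper bound of Proposition \ref{prop:upper-KR}, Markov's inequality applied to $\P\left(\Crt_{f_T}((E_0(p)+\epsilon,\infty))\ge 1\right)$, the complexity function $\Sigma_\R^{(p)}$ with its unique positive zero $E_0(p)$ from Lemma \ref{lem:annealed_complexity_real}, and Lemma \ref{lem: Frob. norm concen.} for the normalized state. The gap is in your step (iii), which you correctly identify as the crux but then treat with an argument that does not close. The paper does not estimate $\E_{\mathrm{BHGAE}}\lvert\det(u-W_N)\rvert$ by splitting on the operator-norm event; it invokes \cite[Theorem 4.1]{arous2022exponential} as a black box, after verifying its four hypotheses in Proposition \ref{prop:assumptions_check}: quantitative Wasserstein control (at rate $N^{-\kappa}$) of the \emph{mean} empirical spectral measure, obtained through the Matrix Dyson Equation and the local law of \cite{alt2019location}; sub-Gaussian concentration of Lipschitz linear statistics at scale $e^{-cN^2\delta^2}$ via the Herbst argument; the spectral-gap property on closed sets $\mathcal{K}\subset\mathcal{A}$; and the moment bound $\E\lvert\det H_N(u)\rvert\le (C\max(\lvert u\rvert,1))^N$. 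These quantitative inputs are not refinements of your sketch --- they are what your sketch is missing.

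Concretely, two steps fail as stated. First, on the good event, weak convergence of the empirical spectral distribution gives $\frac1N\log\lvert\det(u-W_N)\rvert\to\Omega(\cdot)$ only \emph{in probability}; since Jensen's inequality goes the wrong way, this does not bound $\frac1N\log\E\lvert\det(u-W_N)\rvert$. To control the expectation one must rule out rare events on which the determinant is exponentially larger than its typical value, and since the quantity being averaged is itself of size $e^{CN}$, this requires concentration of $\frac1N\Tr f(W_N)$ at a rate beating every $e^{-cN}$ --- exactly the $e^{-cN^2\delta^2/\lnorm f\rnorm_{\text{Lip}}^2}$ concentration of property \ref{enum:prop_Lip-Trace}, combined with the Wasserstein control of property \ref{enum:prop_Wass}; plain ESD convergence is strictly weaker. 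Second, on the bad event there is no ``crude deterministic bound'': the entries of $W_N$ are Gaussian, hence unbounded, so $\lvert\det(u-W_N)\rvert$ admits no almost-sure bound. The natural repair, Cauchy--Schwarz against a second-moment bound, yields a contribution of order $(C\max(u,1))^N e^{-N^{1-\delta}/2}$, and since $e^{-N^{1-\delta}/2}$ decays only \emph{sub}-exponentially in $N$, this term is not negligible at the exponential scale $e^{N\Sigma_\R^{(p)}(u)}$ at which the whole argument operates; the conclusion $\limsup_d \frac1N\log\E(\Crt_{f_T})\le\Sigma_\R^{(p)}(u)$ would not follow. (Your side remarks are essentially fine: the factor-of-$2$ symmetrization matches what the paper does in the double-scaling regime, and the role of $p\ge 3$ in keeping $E_0(p)$ strictly above the edge $2\sqrt{(p-1)/p}$ --- so that the gap property \ref{enum:prop_gap} can be verified on $\mathcal{K}\subset\mathcal{A}$ via Proposition \ref{prop:operator_norm_bound} --- is the right intuition; also, the term $\frac12\log\frac{p-1}{p}$ in $\Sigma_\R^{(p)}$ arises from the rescaling of the semicircle in the determinant asymptotics, not from the volume prefactor $K(p,d)$, whose limit is just $\frac{1+\log p}{2}$ by Lemma \ref{lem:asymptotics_constant_KR}.)
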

\begin{proof}
   We obtain this theorem by applying \cite[Theorem 4.1]{arous2022exponential} to estimate the upper bound on the annealed complexity $\log \E(\Crt_{f_T}(B))$ of the Gaussian process $f_T$ expressed in proposition \ref{prop:assumptions_check} for $B\subset(E_0(p),+\infty)$. \\
   
   The assumptions of \cite[Theorem 4.1]{arous2022exponential} are checked in proposition \ref{prop:assumptions_check}.\\

\noindent The upper bound on the injective norm follows from the application of the Markov inequality to the probability $\P\left(\Crt_{f_T}((E_0(p)+\epsilon, \infty))\ge 1\right).$ Formally, 
\begin{align*}
    \frac1{p(d-p)}\log\P\left(\frac1{\sqrt{p(d-p)}}\injnorm{T}> E_0(p)+\epsilon\right)&=\frac1{p(d-p)}\log \P\left(\Crt_{f_T}((E_0(p)+\epsilon, +\infty))\ge 1\right)\\
    &\le \frac1{p(d-p)}\log \E\left(\Crt_{f_T}((E_0(p)+\epsilon, +\infty))\right).
\end{align*}
According to lemma \ref{lem:annealed_complexity_real} and \cite[theorem 4.1]{arous2022exponential} one finds { for all $\epsilon>0$}
$$\frac1{p(d-p)}\log\P\left(\frac1{\sqrt{p(d-p)}}\injnorm{T}> E_0(p)+\epsilon\right)\le \sup_{u \in (E_0(p)+\epsilon, \infty)} \Sigma_\R(u)\le 0.$$

\noindent The normalized version follows then as a result of lemma \ref{lem: Frob. norm concen.}.
\end{proof}

  \begin{lemma}\label{lem:asymptotics_constant_KR}
     Let  $$K(p,d):=\pi^{\frac12p(d-p)}\frac{\prod_{i=1}^p \Gamma(i/2)\prod_{i=1}^{d-p}\Gamma(i/2)}{\prod_{i=1}^d\Gamma(i/2)}\left(\sqrt{\frac{p(d-p)}{2\pi}}\right)^{p(d-p)+1},$$
     then 
     \begin{equation}
         \lim_{d\rightarrow\infty}\frac1{p(d-p)}\log K(p,d)=\frac{\log p +1}{2}.
     \end{equation}
  \end{lemma}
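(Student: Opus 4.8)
The plan is to take logarithms and isolate the three multiplicative contributions to $K(p,d)$: the factor $\pi^{p(d-p)/2}$, the Gamma quotient, and the power of $\sqrt{p(d-p)/(2\pi)}$. The first contributes exactly $\frac12\log\pi$ to $\frac1{p(d-p)}\log K(p,d)$, and since $\frac{p(d-p)+1}{p(d-p)}\to 1$ the third contributes $\frac12\log\frac{p(d-p)}{2\pi}$ up to an error that vanishes after normalization. The middle factor I would first simplify by cancelling the common factor $\prod_{i=1}^{d-p}\Gamma(i/2)$:
\[
\frac{\prod_{i=1}^p \Gamma(i/2)\prod_{i=1}^{d-p}\Gamma(i/2)}{\prod_{i=1}^d\Gamma(i/2)}=\frac{\prod_{i=1}^p\Gamma(i/2)}{\prod_{i=d-p+1}^{d}\Gamma(i/2)}.
\]
The numerator depends only on $p$, so its logarithm divided by $p(d-p)\to\infty$ vanishes; the entire asymptotic content lies in the tail sum $S:=\sum_{i=d-p+1}^d\log\Gamma(i/2)$.

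Next I would estimate $S$ via Stirling's expansion $\log\Gamma(z)=(z-\tfrac12)\log z-z+\tfrac12\log(2\pi)+O(1/z)$ at $z=i/2$, where every $i\in\{d-p+1,\dots,d\}$ is comparable to $d$. Writing $i=d-j$ with $0\le j\le p-1$ and expanding $\log\frac{d-j}{2}=\log\frac d2-\frac jd+O(j^2/d^2)$, the leading terms $\frac i2\log\frac i2$ sum to $\frac{p(2d-p+1)}{4}\log\frac d2$ plus an $O(1)$ correction, the linear terms $-\frac i2$ sum to $-\frac{p(2d-p+1)}{4}$, and the remaining $-\frac12\log\frac i2$ and $\frac12\log(2\pi)$ contributions are $O(\log d)$ and $O(1)$ respectively. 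Dividing $-S$ by $p(d-p)$, the dominant piece yields $-\frac{2d-p+1}{4(d-p)}\log\frac d2\to-\frac12\log\frac d2$, the linear piece yields $+\frac{2d-p+1}{4(d-p)}\to+\frac12$, and all other contributions are $o(1)$.

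The main obstacle is that, taken separately, the Gamma quotient and the factor $\left(\sqrt{p(d-p)/(2\pi)}\right)^{p(d-p)}$ each diverge logarithmically---$-\frac{S}{p(d-p)}\sim-\frac12\log\frac d2$ against $\frac12\log\frac{p(d-p)}{2\pi}\sim\frac12\log d$---so the substance of the lemma is that these divergences cancel, the finite limit being produced entirely by the surviving $O(1)$ Stirling corrections. Combining the two,
\[
-\frac{S}{p(d-p)}+\frac12\log\frac{p(d-p)}{2\pi}=\frac12+\frac12\log\frac{p(d-p)}{\pi d}+o(1)\xrightarrow[d\to\infty]{}\frac12+\frac12\log\frac p\pi.
\]
Adding the $\frac12\log\pi$ from the first factor gives $\frac12\log\pi+\frac12+\frac12\log\frac p\pi=\frac{1+\log p}{2}$, as claimed. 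The only care required is to carry Stirling to the order capturing the $O(1)$ terms and to confirm that every quantity of order $o(p(d-p))$---the finite-$p$ numerator $\prod_{i=1}^p\Gamma(i/2)$, the subleading $O(\log d)$ pieces, and the $+1$ in $p(d-p)+1$---washes out after dividing by $p(d-p)$.
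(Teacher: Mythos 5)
Your proposal is correct and follows essentially the same route as the paper: decompose $\log K(p,d)$, discard the finite-$p$ product $\prod_{i=1}^p\Gamma(i/2)$ after dividing by $p(d-p)$, and extract the asymptotics of the tail sum $\sum_{i=d-p+1}^d\log\Gamma(i/2)$ via Stirling, whose $\tfrac12\log d$ divergence cancels against the $\tfrac12\log p(d-p)$ term. The only (immaterial) difference is that the paper sandwiches this tail sum between $p\log\Gamma\bigl(\tfrac{d-p}{2}\bigr)$ and $p\log\Gamma\bigl(\tfrac{d}{2}\bigr)$ using monotonicity and applies Stirling to the two bounds, whereas you expand each of the $p$ terms directly.
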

  \begin{proof}
 Noting that
      \begin{multline}
      \frac1{p(d-p)}\log K(p,d)=\frac1{p(d-p)}\Bigl(\frac12 p(d-p)\log \pi +\frac12(p(d-p)+1)\log p(d-p)\\
      - \frac12(p(d-p)+1)\log2\pi+\sum_{i=1}^{d-p}\log \Gamma(i/2)+\sum_{i=1}^p\log\Gamma(i/2)-\sum_{i=1}^d \log\Gamma(i/2) \Bigr).\\
      \end{multline}
      The only needed remarks are that for $p$ finite
      \begin{equation}
          \lim_{d\rightarrow \infty}\frac1{p(d-p)}\sum_{i=1}^p\log \Gamma(i/2)=0, \label{limp0}
      \end{equation}
      and bounding the sum:
      \begin{equation}
         \frac1{d-p}\log\Gamma\left(\frac{d-p}{2}\right)\le\frac1{p(d-p)} \left(\sum_{i=1}^d \log \Gamma(i/2)-\sum_{i=1}^{d-p}\log \Gamma(i/2)\right)\le \frac1{d-p}\log\Gamma\left(\frac{d}{2}\right). 
      \end{equation}
      Using Stirling's approximation on the upper and lower bound, we conclude that 
      \begin{equation}
         \lim_{d\rightarrow\infty}\frac1{p(d-p)}\log K(p,d)=\frac{\log p +1}{2}.
     \end{equation}
     This finishes the proof.
  \end{proof}

In particular, we recall the necessary assumptions (see \cite[theorem 1.2]{arous2022exponential} and its assumptions) that we have to check:
\begin{enumerate}
    \item \label{enum:prop_Wass} Control of the Wasserstein distance: Let $\mu_{H_N(u)}=\frac1{N}\sum_{i=1}^{N}\delta_{\lambda_i(H_N(u))}$ be the empirical spectral measure of the Hessian. According to \cite[Theorem 4.1 \& Theorem 1.2]{arous2022exponential}, one needs to show that $\exists \kappa>0$ such that $W_1(\E(\mu_{H_N(u)}),\mu_p(u))<N^{-\kappa}$.
    \item \label{enum:prop_Lip-Trace}Concentration of Lipschitz trace: One needs to prove that for every Lipschitz  function $f:\R\rightarrow \R$, $\exists C,c>0$ such that , 
    $$\, \P\left(\left\lvert\frac1N\Tr(f(H_N(u)))-\frac1N\E(\Tr(f(H_N(u)))) \right\rvert\ge \delta \right)\le Ce^{-c\frac{N^2 \delta^2}{\lnorm f\rnorm^2_{\text{Lip}} }}.$$
    \item \label{enum:prop_gap}Gap assumption:
    $$\forall \epsilon>0, \, \lim_{N\rightarrow \infty}\P\left(\text{Spec}(H_N(u))\cap [-e^{-N^\epsilon},e^{-N^\epsilon}]\right)=0,$$
    and as was realized  through lemma 3.11 in \cite{dartois2024injective}, one only needs this property to be true for $\lvert u \rvert$ large enough.
\end{enumerate}
We also need specifically from \cite[theorem 4.1]{arous2022exponential}
\begin{enumerate}
\setcounter{enumi}{3}
    \item \label{enum:prop_bound_determinant} There exists $C>0$ such that $\E\left(\lvert \det(H_N(u))\rvert \right)\le (C\max(\lvert u\rvert ,1))^N$ together with the map $u\mapsto H_N(u)$ being entrywise continuous (in our case it is just a translation of the diagonal elements by $u$).
\end{enumerate}
In fact, letting $\mathcal{A}:=(-\infty, -E_0(p))\cup (E_0(p),\infty)$, one shows, 
\begin{proposition}\label{prop:assumptions_check}
Let $H_N(u)=u-W_N$, where $W_N$ is a BHGAE. Then properties \ref{enum:prop_Wass}, \ref{enum:prop_Lip-Trace}, \ref{enum:prop_bound_determinant} are satisfied for all $u\in \R.$  Assuming $u\in \mathcal{K}\subset \mathcal{A}$ closed, property \ref{enum:prop_gap} is also satisfied.  
\end{proposition}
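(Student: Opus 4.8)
The plan is to verify, one by one, the four hypotheses of \cite[Theorem 4.1]{arous2022exponential} for the shifted ensemble $H_N(u)=u-W_N$, establishing the three ``global'' properties \ref{enum:prop_Wass}, \ref{enum:prop_Lip-Trace}, \ref{enum:prop_bound_determinant} for every $u\in\R$ and isolating \ref{enum:prop_gap} as the one that genuinely needs $u$ to sit outside the bulk. Since $H_N(u)$ differs from $-W_N$ only by the scalar shift $u$, and $W_N$ is a symmetric matrix obtained linearly from i.i.d.\ Gaussians of variance $1/N$ (with $N=p(d-p)$), every statement reduces to a spectral property of the $\mathrm{BHGAE}(d-p,p,\tfrac1{p(d-p)})$ ensemble; in particular entrywise continuity of $u\mapsto H_N(u)$ required in \ref{enum:prop_bound_determinant} is immediate, being a diagonal translation.

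I would treat \ref{enum:prop_Lip-Trace} first, as the softest ingredient. Writing $W_N=\mathcal{L}(g)$ as a linear image of a standard Gaussian vector $g$, the Hoffman--Wielandt inequality shows that $g\mapsto\frac1N\Tr f(u-\mathcal{L}(g))$ is $\tfrac{\|f\|_{\mathrm{Lip}}}{\sqrt N}$-Lipschitz for the Euclidean norm, the antisymmetrization $\tfrac1{\sqrt2}(A_N-A_N^{\ammaG})$ affecting only constants; standard Gaussian concentration with per-coordinate variance $1/N$ then yields the tail $Ce^{-cN^2\delta^2/\|f\|^2_{\mathrm{Lip}}}$ for all $u$. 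The determinant bound \ref{enum:prop_bound_determinant} I would deduce from this: with $m:=\max(|u|,1)\ge1$ one has $|\det(u-W_N)|=\prod_i|u-\lambda_i|\le m^N\prod_i(1+|\lambda_i|)\le m^N e^{\Tr|W_N|}$, so it suffices that $\E\,e^{\Tr|W_N|}\le C^N$. Applying \ref{enum:prop_Lip-Trace} to the $1$-Lipschitz function $f(x)=|x|$ shows $\tfrac1N\Tr|W_N|$ concentrates at scale $1/N$ around its mean, which stays bounded (e.g.\ $\tfrac1N\E\Tr|W_N|\le N^{-1/2}(\E\Tr W_N^2)^{1/2}=O(1)$ by Cauchy--Schwarz); the sub-Gaussian tail controls the exponential moment $\E\,e^{N\cdot\frac1N\Tr|W_N|}\le C^N$, giving $\E|\det(u-W_N)|\le(Cm)^N$ for all $u\in\R$.

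The Wasserstein control \ref{enum:prop_Wass} I would obtain from the quantitative local-law machinery for structured Gaussian matrices with a variance profile, following \cite{alt2019location} exactly as in \cite{dartois2024injective}. Through the matrix Dyson equation, the block-hollow, intra-block antisymmetric profile of the BHGAE yields a self-consistent density that one identifies with the semicircle of variance $\tfrac{p-1}{p}$, consistent with the edge $2\sqrt{\tfrac{p-1}{p}}$ of Proposition \ref{prop:operator_norm_bound}; the measure $\mu_p(u)$ is then its pushforward under $\lambda\mapsto u-\lambda$, and the local law delivers $W_1(\E\mu_{H_N(u)},\mu_p(u))\le N^{-\kappa}$. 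I expect this to be the main obstacle: it is where the only real analysis lies, namely verifying that the skew-symmetric block pattern feeds the variance-profile local law with the right regularity and produces the semicircle rather than some deformed density. That said, the structure parallels \cite{dartois2024injective} closely enough that the new content is bookkeeping of the profile rather than a new estimate.

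Finally, \ref{enum:prop_gap} is where the hypothesis $u\in\mathcal{K}\subset\mathcal{A}$ enters, and here it is comparatively soft. Since $\mathcal{K}$ is closed and disjoint from the compact set $\mathcal{A}^c=[-E_0(p),E_0(p)]$, one has $\inf_{u\in\mathcal{K}}|u|\ge E_0(p)+\eta$ for some $\eta>0$. The shape of $\Sigma_\R^{(p)}$ from Lemma \ref{lem:annealed_complexity_real} places $E_0(p)$ strictly beyond the spectral edge $2\sqrt{\tfrac{p-1}{p}}$ (for $p\ge3$ a direct evaluation of $\Sigma_\R^{(p)}$ at the edge is positive, so the unique zero lies to its right), hence there is a fixed $\rho>0$ with $\inf_{u\in\mathcal{K}}|u|\ge 2\sqrt{\tfrac{p-1}{p}}+\rho$. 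Proposition \ref{prop:operator_norm_bound} then gives $\|W_N\|<2\sqrt{\tfrac{p-1}{p}}+\rho/2$ off an event of probability at most $e^{-N^{1-\delta}}$, so on the complementary event every eigenvalue of $H_N(u)=u-W_N$ has modulus at least $\rho/2\gg e^{-N^\epsilon}$; this forces $\mathrm{Spec}(H_N(u))\cap[-e^{-N^\epsilon},e^{-N^\epsilon}]=\emptyset$ with probability tending to $1$, uniformly over $u\in\mathcal{K}$, which is exactly \ref{enum:prop_gap}.
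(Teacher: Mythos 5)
Your proposal is correct and, for properties \ref{enum:prop_Wass}, \ref{enum:prop_Lip-Trace} and \ref{enum:prop_gap}, follows essentially the same route as the paper: the Lipschitz-trace concentration via the matrix-to-spectrum Lipschitz map plus Gaussian concentration (the paper phrases this as the Herbst argument adapted from Anderson--Guionnet--Zeitouni, which is the same mechanism and likewise only needs the Lipschitz constant of order $\lnorm f\rnorm_{\mathrm{Lip}}/N$ in the entries); the Wasserstein control via the Matrix Dyson Equation and the Kronecker-matrix local law of \cite{alt2019location}, yielding the semicircle with edge $2\sqrt{(p-1)/p}$ --- note that the paper actually carries out the computation you defer to ``bookkeeping'', namely writing $W_N$ in Kronecker form and checking $\mathscr{S}_i[c\Id]=\frac{p-1}{p}c\Id$, so nothing more is needed there; and the gap property via the $u$-independent event $\{\lnorm W_N\rnorm\le 2\sqrt{(p-1)/p}+\delta_{\mathcal K}\}$ controlled by Proposition \ref{prop:operator_norm_bound}, where your explicit observation that $E_0(p)$ lies strictly beyond the bulk edge for $p\ge 3$ is a fact the paper uses only implicitly in asserting that the spectrum is gapped on this event. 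The one genuinely different step is property \ref{enum:prop_bound_determinant}: the paper disposes of it by citation to \cite[Lemma 4.4]{arous2024landscape} and \cite[Lemma 3.18]{dartois2024injective}, whereas you derive it self-containedly from property \ref{enum:prop_Lip-Trace} via $|\det(u-W_N)|\le \max(|u|,1)^N e^{\Tr|W_N|}$, the Cauchy--Schwarz bound $\E\Tr|W_N|\le N$, and the sub-Gaussian tail of $\Tr|W_N|$ to control the exponential moment $\E\, e^{\Tr|W_N|}\le C^N$; this argument is valid and buys independence from external references, at the cost only of a lossier (but immaterial) constant $C$.
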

\begin{proof}
The proof consists in checking that the arguments proving lemma 3.15, lemma 3.16 and lemma 3.17 in \cite{dartois2024injective} can be adapted. In fact, \textbf{property \ref{enum:prop_Lip-Trace}} , the concentration of Lipschitz trace, is obtained as a consequence of the Herbst argument, which can also here be adapted from \cite[Lemma 2.3.3 \& Theorem 2.3.5]{anderson2010introduction} noticing that the function $g:\R^{p\frac{(d-p)(d-p-1)}{2}}\rightarrow \R$ maps the vector of $p\frac{(d-p)(d-p-1)}{2}$ independent entries  of $W_N$ to $\Tr(f(W_N))$ where $f$ is the Lipschitz function given in property \ref{enum:prop_Lip-Trace}. Thus, it can be shown that $g$ is also a Lipschitz function with a Lipschitz constant of order $\sqrt{N} \lnorm f\rnorm_{Lip}$. The rest of the Herbst argument can be taken analogously as in \cite{dartois2024injective}.  The only difference in this case is the argument that $\sup_N \E[\lnorm W_N\rnorm]<\infty$ , which is shown in proposition \ref{prop:operator_norm_bound}.\\
\textbf{Property \ref{enum:prop_Wass}} is obtained in the same way, as in \cite{dartois2024injective}. To this aim we need to import the local law of \cite{alt2019location}. This is obtained as a consequence of the Matrix Dyson Equation for $W_N$. 
Define, for all $i<j$, $\beta_{ij}^{(p)}:=E_{ij}^{(p)}-E_{ji}^{(p)}$ so that $(\beta_{ij}^{(p)})^t=-\beta_{ij}^{(p)}$, and $G_{ij}$ be a  random matrix with i.i.d. standard Gaussian entries. Then a BHGAE matrix $W_N$ can be expressed as:
$$W_N={\frac1{\sqrt{2p(d-p)}}}\sum_{1\le i<j\le p}\beta_{ij}^{(p)}\otimes G_{ij}+(\beta^{(p)}_{ij})^t\otimes G_{ij}^t,$$
so that we recover the form of a Kronecker random matrix, where comparing with notations from \cite[Definition 2.1]{alt2019location}, $L=p$, $N=d-p$ and, $\gamma_{ij}^{(p)}=-\beta_{ij}^{(p)}$. \\
The components $\mathscr{S}_i$ of the operator $\mathscr{S}$ do not depend on $i$, and we have $$\mathscr{S}_i[r]={\frac{1}{2p(d-p)}}\sum_{k=1}^{d-p}\sum_{1\le i < j \le p}\beta^{(p)}_{ij}r_k(\beta^{(p)}_{ij)})^t+\gamma^{(p)}_{ij}r_k(\gamma^{(p)}_{ij})^t,$$
so that the matrix Dyson equation is, $\forall i\in \{1,\ldots, d-p\}$ 
\begin{align}
    &\Id+(z\Id+\mathscr{S}_i[m(z)])m_i(z)=0, \label{MDE1}\\
    &\Im{m_i(z)}> 0, \label{MDE2}
\end{align}
As in \cite{dartois2024injective}, let $r=(c\Id,\ldots, c\Id)$ where $\Id$ is the $(d
-p)\times(d-p)$ identity matrix, one finds
$$\mathscr{S}_i[c\Id]=\frac{p-1}{p}c\Id,$$
which is sufficient according to the proof of \cite[Lemma 3.16]{dartois2024injective} to declare that $m_i(z)=m_p(z) \Id$ with $$m_p(z)=\frac{-z+\sqrt{z^2-4\left(\frac{p-1}{p}\right)}}{2\left( \frac{p-1}{p}\right)}.$$
$m_p(z)$ is the Stieltjes transform of the the density $\rho_p(z)=\frac{p}{2\pi (p-1)}\sqrt{\left(4\left( \frac{p-1}{p}\right)-x^2\right)_+}.$
Then, using the local law \cite[(B.5)]{dartois2024injective} leads to the same bound as in the proof of  \cite[lemma 3.16]{dartois2024injective} which is sufficient to prove \ref{enum:prop_Wass}. \\
We now come to \textbf{property \ref{enum:prop_gap}}. Due to the presence of structural zeros in the Hessian it is not possible to obtain a bound on the probability of occurrence of very small singular values for any value of $u$ with available techniques in the literature. However, as was found in \cite{dartois2024injective}, the trick is to work on the event $\mathcal{E}_{\mathcal K}:=\left\{\lnorm W_N\rnorm\le 2\sqrt{\frac{p-1}{p}}+\delta_\mathcal{K}\right\}$ for $\mathcal{K}\subset \mathcal{A}$, where $\delta_\mathcal{K}=d(\mathcal{K},\bar{\mathcal{A}})$ is the distance between $\mathcal{K}$ and $\bar{\mathcal A}$. In fact, conditioned on this event, the spectrum of $\text{Hess} f_N$ is gapped away from zero (i.e. the spectrum of $\text{Hess} f_N$ is at distance at least $\delta_{\mathcal{K}}$ of $0$). This translates into the lower bound 
$$\inf_{u\in \mathcal{K}}\P(\text{Spec}(\text{Hess}\,{f_N })\cap [-e^{-N^\epsilon}, e^{N^\epsilon}]=\emptyset)\ge \P(\mathcal{E}_{\mathcal K}).$$
One then has from proposition \ref{prop:operator_norm_bound} that $\lim_{N\rightarrow \infty}\P(\mathcal{E}_{\mathcal{K}})=1.$ Proving the necessary gap property.
\textbf{Property \ref{enum:prop_bound_determinant}} is proved exactly the same way as in \cite[Lemma 4.4]{arous2024landscape} and \cite[Lemma 3.18]{dartois2024injective}.

\end{proof}
\subsection{Real, double scaling}
\begin{theorem}[Real, skew-symmetric, double scaling, $p,d \rightarrow \infty$]
For a fixed $\alpha \in (0,1)$, let $p=\lfloor \alpha d \rfloor $ and $T\in \bigwedge^p\R^d$ be a random skew-symmetric tensor with i.i.d. $\mathcal{N}_\R(0,1)$ entries and let $\gamma_\alpha(d)$ be defined as in Lemma \ref{lem: Asympgamma} then 
for every $\epsilon>0$ we have:
\begin{align}
    \limsup_ {d \rightarrow \infty} \frac{1}{d(d-p)}\log \P\left( \frac{\injnorm{T}}{\sqrt{d^2\alpha(1-\alpha)}} > \gamma_\alpha(d)+ \frac{\epsilon}{\sqrt{\log d}} \right) < 0.
\end{align}
Let $\ket{\psi_f}:=\frac{T}{\lnorm T\rnorm_{2}}$, then:
$$    \limsup_{d\rightarrow \infty}\frac1{p(d-p)}\log \P\left(\frac{\injnorm{ \ket{\psi_f}}}{\sqrt{d^2\alpha(1-\alpha)}}\ge d^{-\frac{\alpha d}{2}} e^{\frac12(\alpha + (1-\alpha)\log(1-\alpha))d} (\gamma_\alpha(d)+\epsilon)\right)<0.$$
\end{theorem}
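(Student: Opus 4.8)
The plan is to run the same Kac-Rice argument as in the fixed-$p$ theorem, but to replace the black-box appeal to \cite[Theorem 4.1]{arous2022exponential} by a hands-on control of the annealed determinant that survives the joint limit in which both the number of blocks $p$ and the block size $d-p$ diverge. First I would observe that $\sqrt{p(d-p)}=d\sqrt{\alpha(1-\alpha)}\,(1+o(1))$, so the normalizations $\injnorm{T}/\sqrt{d^2\alpha(1-\alpha)}$ and $\injnorm{T}/\sqrt{p(d-p)}$ agree up to a $(1+o(1))$ factor; writing $N=p(d-p)$ and $E_d=\gamma_\alpha(d)+\epsilon/\sqrt{\log d}$, Markov's inequality applied to $\Crt_{f_T}((E_d,\infty))$ gives, exactly as in the fixed-$p$ case,
$$\frac1N\log\P\!\left(\tfrac{\injnorm{T}}{\sqrt{d^2\alpha(1-\alpha)}}>E_d\right)\le \frac1N\log\E\bigl(\Crt_{f_T}((E_d,\infty))\bigr),$$
and Proposition \ref{prop:upper-KR} bounds the right-hand side by $\tfrac1N\log K(p,d)+\tfrac1N\log\int_{E_d}^\infty e^{-\frac N2 u^2}\,\E_{\mathrm{BHGAE}}(\lvert\det(u-W_N)\rvert)\,\mathrm du$ up to negligible $\tfrac1N\log\sqrt{N/2\pi}$ terms.

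Next I would redo the prefactor asymptotics of Lemma \ref{lem:asymptotics_constant_KR} with $p=\lfloor\alpha d\rfloor\to\infty$. The $\Gamma$-sums now become Riemann sums producing $\alpha$-dependent corrections, but the dominant behaviour is still driven by the $\tfrac12\log p(d-p)$ term, so $\tfrac1N\log K(p,d)\sim\tfrac12\log p\sim\tfrac12\log(\alpha d)$ \emph{grows} with $d$. This growing constant is exactly what forces the transition energy onto the scale $\gamma_\alpha(d)\sim\sqrt{\log d}$, and it also explains the form of the $\epsilon$-shift: if $\Sigma_\alpha$ denotes the limiting per-site complexity, then $\Sigma_\alpha'(\gamma_\alpha(d))\approx-\gamma_\alpha(d)\approx-\sqrt{\log d}$, so displacing the energy by $\epsilon/\sqrt{\log d}$ lowers the complexity by $\approx\epsilon$, producing the strictly negative gap we need.

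The technical heart is the estimate of $\E_{\mathrm{BHGAE}}(\lvert\det(u-W_N)\rvert)$ when the matrix dimensions grow jointly. I would split the expectation over the confinement event $\mathcal E=\{\lnorm W_N\rnorm\le 2\sqrt{(p-1)/p}+\delta\}$ and its complement. On $\mathcal E$ all eigenvalues lie in a fixed compact set, and for $u$ in the relevant range $u\gtrsim\sqrt{\log d}\gg 2$ one has $\lvert\det(u-W_N)\rvert=\exp\bigl(N\int\log(u-\lambda)\,\mathrm d\mu_{W_N}(\lambda)\bigr)$; the empirical measure $\mu_{W_N}$ concentrates around the self-consistent density $\rho_p(x)=\tfrac{p}{2\pi(p-1)}\sqrt{(4\tfrac{p-1}{p}-x^2)_+}$ of the Matrix Dyson Equation from Proposition \ref{prop:assumptions_check} (via the local law of \cite{alt2019location}), so this contribution is $e^{N(\Omega_{\rho_p}(u)+o(1))}$ with $\Omega_{\rho_p}(u)=\log u+O(1/u^2)$. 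The complement is handled by Cauchy--Schwarz, $\E[\lvert\det\rvert\mathbf 1_{\mathcal E^c}]\le(\E\det^2)^{1/2}\,\P(\mathcal E^c)^{1/2}$, where $\tfrac1{2N}\log\E\det^2=\log u+o(1)$ for large $u$ and $\P(\mathcal E^c)\le e^{-N^{1-\delta}}$ by Proposition \ref{prop:operator_norm_bound}, so this term does not exceed the rate of the main contribution. Combining, $\tfrac1N\log\E_{\mathrm{BHGAE}}(\lvert\det(u-W_N)\rvert)=\Omega_{\rho_p}(u)+o(1)$ uniformly on the range, whence $\Sigma_\alpha(u)=\tfrac1N\log K(p,d)-\tfrac{u^2}2+\Omega_{\rho_p}(u)+o(1)$; the positive zero of $\Sigma_\alpha$ is $\gamma_\alpha(d)$ by Lemma \ref{lem: Asympgamma}, and a Laplace analysis of the $u$-integral gives $\sup_{u>E_d}\Sigma_\alpha(u)<0$, which is the first claim.

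Finally, the normalized-state bound follows by dividing through by $\lnorm T\rnorm_2$: by Lemma \ref{lem: Frob. norm concen.} the factor $d^{-\alpha d/2}e^{\frac12(\alpha+(1-\alpha)\log(1-\alpha))d}$ is exactly $1/\sqrt{\E\lnorm T\rnorm_2^2}$ to leading order, and the concentration inequality there shows $\lnorm T\rnorm_2$ falls below this value only on an event of probability super-exponentially small on the scale $N$, which is absorbed into the $\limsup$. The main obstacle is the uniform two-sided control of $\tfrac1N\log\E_{\mathrm{BHGAE}}(\lvert\det(u-W_N)\rvert)$ in the joint limit: unlike the fixed-$p$ case one cannot quote fixed-dimension determinant asymptotics, and one must instead marry the local law with the operator-norm confinement of Proposition \ref{prop:operator_norm_bound}, using in an essential way that the competing energy scale is $\sqrt{\log d}\to\infty$ so that all error terms are $o(1)$ per site.
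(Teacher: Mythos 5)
Your proposal is correct and takes essentially the same approach as the paper: Markov's inequality on the number of critical points, the Kac--Rice upper bound of Proposition \ref{prop:upper-KR}, the double-scaling asymptotics of the volume prefactor $K(\lfloor\alpha d\rfloor,d)$, a Laplace analysis of the determinant integral around the diverging level $\gamma_\alpha(d)\sim\sqrt{\log d}$ (where the shift by $\epsilon/\sqrt{\log d}$ yields the $-\epsilon$ gap), and Lemma \ref{lem: Frob. norm concen.} for the normalized state. The only real difference is that you sketch inline (operator-norm confinement, local law, Cauchy--Schwarz with a second-moment determinant bound) the control of $\E_{\text{BHGAE}}\left[\lvert\det(u-W_N)\rvert\right]$ that the paper packages as Lemma \ref{lem: LimitDeterminant} and delegates to the corresponding lemma of \cite{dartois2024injective}, and your sketch correctly identifies the essential mechanism, namely that $u\to\infty$ in this regime makes all constant-order errors $o(1)$ per site.
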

\begin{proof}
We begin with the inequality:
\begin{align}
    &\P\left( \frac{\injnorm{T}}{d^2\alpha(1-\alpha) }>\gamma_\alpha(d)+ \frac{\epsilon}{\sqrt{\log(d)}} \right) \\ \nonumber & \leq \P\left[\text{Crt}_{f_T}([\gamma_\alpha(d)+\epsilon/\sqrt{\log(d)}, \infty)) \geq 1\right] + \P\left[\text{Crt}_{f_T}((-\infty,-\gamma_\alpha(d)-\epsilon/\sqrt{\log(d)} ]) \geq 1\right] \\ \nonumber & = 2 \P\left[\text{Crt}_{f_T}([\gamma_\alpha(d)+\epsilon/\sqrt{\log(d)}, \infty)) \geq 1\right] \leq  2 \E \left[\text{Crt}_{f_T}\left([\gamma_\alpha(d)+\epsilon/\sqrt{\log(d)}, \infty)\right) \right]
\end{align}
Furthermore, the term on the right can be bounded by:
\begin{align}
    & \limsup_{d \rightarrow \infty} \E \left[\text{Crt}_{f_T}\left([\gamma(d)+\epsilon/\sqrt{\log(d)}, \infty\right) \right] \\ \nonumber & \leq K(\alpha d,d) \int_{\gamma_\alpha(d)}^\infty e^{-N\frac{u^2}{2}} \E_{\text{BHGAE}}\left[ |\det(W_N-u)| \right] \dd u 
\end{align}
Where we have used proposition \ref{prop:upper-KR} for the upper bound.
Taking the logarithm of the right hand side and using lemmas \ref{lem: Asympgamma} and \ref{lem: LimitDeterminant} we arrive at the following expression:
\begin{align}
    & \limsup_{d \rightarrow \infty} \frac{1}{d^2 \alpha (1-\alpha)} \log \E \left[\text{Crt}_{f_T}\left([\gamma(d)+\epsilon/\sqrt{\log(d)}, \infty\right) \right] \\ \nonumber & \leq \beta(\alpha) + \limsup_{d \rightarrow \infty} \left[\frac{\log d}{2} + \Omega\left(\gamma_\alpha(d) +\epsilon/\sqrt{\log d}\right) - \frac{(\gamma_\alpha(d) +\epsilon/\sqrt{\log d})^2}{2}  \right] \leq -\epsilon 
\end{align}
with $\beta(\alpha)$ defined in lemma \ref{lem: Asympgamma}. Finally, the normalized version follows as a result of lemma \ref{lem: Frob. norm concen.}.
\end{proof}
\begin{lemma} \label{lem: asym_real_Grass_vol}
    Let $p=\lfloor\alpha d \rfloor$ and let $K(p , d)$ be defined as in \ref{lem:asymptotics_constant_KR}. Then we have:
    \begin{align}
        \lim_{d \rightarrow \infty} \frac{1}{p(d-p)} \log K(p ,d) = \frac{3}{4}+ \frac{1}{4} \frac{\alpha}{1-\alpha} \log(\alpha)&+\frac{1}{4} \frac{1-\alpha}{\alpha} \log(1-\alpha) + \frac{1}{2} \log(d) \\ \nonumber & + \frac12 \log(\alpha(1-\alpha))+ \mathcal{O}\left( \frac{\log(d)}{d}\right)
    \end{align}
\end{lemma}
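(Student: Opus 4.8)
The plan is to reuse the exact decomposition of $\frac{1}{p(d-p)}\log K(p,d)$ already written out in the proof of Lemma~\ref{lem:asymptotics_constant_KR}, namely
\begin{multline*}
\frac{1}{p(d-p)}\log K(p,d)=\frac12\log\pi+\frac{p(d-p)+1}{2p(d-p)}\log\bigl(p(d-p)\bigr)\\
-\frac{p(d-p)+1}{2p(d-p)}\log(2\pi)+\frac{1}{p(d-p)}\Bigl(\textstyle\sum_{i=1}^{p}\log\Gamma(i/2)+\sum_{i=1}^{d-p}\log\Gamma(i/2)-\sum_{i=1}^{d}\log\Gamma(i/2)\Bigr),
\end{multline*}
and then re-estimate each piece in the regime $p=\lfloor\alpha d\rfloor$. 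The crucial difference from the fixed-$p$ case is that now all three indices $p$, $d-p$ and $d$ diverge linearly in $d$, so the sum $\sum_{i=1}^{p}\log\Gamma(i/2)$ — which was negligible before — contributes at the same leading order as the other two, and every Gamma sum must be expanded to quadratic order in $d$.

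The main technical input I would establish first is the asymptotic
$$S(n):=\sum_{i=1}^{n}\log\Gamma(i/2)=\frac{n^2}{4}\log n-\frac{\log 2}{4}n^2-\frac{3}{8}n^2+O(n\log n),$$
obtained by inserting the Stirling expansion $\log\Gamma(i/2)=\tfrac{i}{2}\log\tfrac{i}{2}-\tfrac{i}{2}+O(\log i)$ termwise and comparing $\sum_{i=1}^n i\log i$ and $\sum_{i=1}^n i$ with their integrals via Euler--Maclaurin. Writing $p=\alpha d+O(1)$ and $d-p=(1-\alpha)d+O(1)$, I would then form $S(p)+S(d-p)-S(d)$ and use the identity $\alpha^2+(1-\alpha)^2-1=-2\alpha(1-\alpha)$ to see the $d^2\log d$, $d^2\log 2$ and $\tfrac{3}{8}d^2$ coefficients all collapse onto multiples of $\alpha(1-\alpha)d^2$, while the surviving scale-dependent pieces produce the $\tfrac14\tfrac{\alpha}{1-\alpha}\log\alpha+\tfrac14\tfrac{1-\alpha}{\alpha}\log(1-\alpha)$ terms after dividing by $p(d-p)=\alpha(1-\alpha)d^2(1+o(1))$.

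The remaining elementary terms combine cleanly: $\tfrac12\log\pi$ cancels against the $-\tfrac12\log\pi$ inside $-\tfrac12\log(2\pi)$, the $\log 2$ produced by the Gamma sums cancels against the $-\tfrac12\log 2$ from $-\tfrac12\log(2\pi)$, and $\tfrac12\log(p(d-p))=\log d+\tfrac12\log(\alpha(1-\alpha))+O(1/d)$ supplies a $\log d$ that combines with the $-\tfrac12\log d$ coming from $S(p)+S(d-p)-S(d)$ to leave exactly $\tfrac12\log d$. Collecting everything yields the claimed expansion.

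The step I expect to be the main obstacle is pinning down the constant-order-in-$n^2$ coefficient $-\tfrac38$ (and the $\log 2$ coefficient) in $S(n)$ precisely, and then verifying that the seemingly dominant $d^2\log d$ contributions cancel exactly; a secondary but necessary point is controlling the floor: since $S'(n)\sim\tfrac{n}{2}\log n$, replacing $\lfloor\alpha d\rfloor$ by $\alpha d$ perturbs each $S$ by $O(d\log d)$, which after division by $\alpha(1-\alpha)d^2$ is $O(\log d/d)$ and hence matches the stated error term.
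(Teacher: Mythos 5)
Your proposal is correct, and I have checked its central ingredients: the quadratic asymptotic $S(n)=\sum_{i=1}^{n}\log\Gamma(i/2)=\frac{n^2}{4}\log n-\frac{\log 2}{4}n^2-\frac38 n^2+O(n\log n)$ (including the $-\frac38$ and $-\frac{\log 2}{4}$ coefficients), the combination via $\alpha^2+(1-\alpha)^2-1=-2\alpha(1-\alpha)$, the cancellation of the $\log\pi$ and $\log 2$ terms, and the floor-function control, which together reproduce the stated expansion with error $\mathcal{O}(\log d/d)$. Your route differs from the paper's in its decomposition rather than its analytic substance. The paper first telescopes the three Gamma products into a single sum of ratios, $\sum_{i}\log\bigl(\Gamma(i/2)/\Gamma(\tfrac{d-p+i}{2})\bigr)$, sandwiches that sum between two integrals, and extracts all the $\alpha$-dependence from one long explicit integral evaluation after inserting Stirling's formula into the integrand. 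You instead keep the three log-Gamma sums separate, prove a standalone asymptotic for $S(n)$ (essentially a Barnes-$G$-type expansion), and let elementary algebra produce the cancellations. Your version is more modular — the same $S(n)$ lemma, with $\Gamma(i)$ in place of $\Gamma(i/2)$, would immediately give the complex analogue for $L(p,d)$ — and it makes the $\alpha\leftrightarrow 1-\alpha$ symmetry of the limit transparent; the paper's version avoids stating any intermediate asymptotic but buries the same structure inside a heavier computation.

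One small correction of wording: your closing claim that the ``seemingly dominant $d^2\log d$ contributions cancel exactly'' is not what happens, and your own earlier computation says so. The combination $S(p)+S(d-p)-S(d)$ retains the term $-\frac{\alpha(1-\alpha)}{2}d^2\log d$, which after dividing by $p(d-p)$ gives $-\frac12\log d$; this then combines with the $+\log d$ coming from $\frac12\log\bigl(p(d-p)\bigr)$ to leave the $+\frac12\log d$ appearing in the statement. Nothing cancels identically there — indeed the ``limit'' in the lemma retains this divergent $\frac12\log d$ term — so the verification you should record is precisely the bookkeeping you described in your second paragraph, not an exact cancellation.
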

\begin{proof}
The goal is to compute $\lim_{p \rightarrow \infty } \log \prod_{i=0}^p \frac{(2i)!}{(d-p+2i)!} = \lim_{p \rightarrow \infty } \sum_{i=0}^p \log \left( \frac{(2i)!}{(d-p+2i)!} \right)$. The sum is bounded from above and below by:
\begin{align}
    \int_1^{p+1} \dd x \log \left( \frac{\Gamma(x/2)}{\Gamma(\frac{d-p+x}{2})} \right) \leq \sum_{i=0}^p \log \left( \frac{\Gamma(i/2)}{\Gamma(\frac{d-p+i}{2})} \right) \leq \int_0^{p} \dd x \log \left( \frac{\Gamma(x/2)}{\Gamma(\frac{d-p+x}{2})} \right).
\end{align}
Notice that for $p,d \rightarrow \infty$ with the fixed asymptotic ratio upper and lower bound multiplied by $\frac{1}{p(d-p)}$ converge to the same value for $d \rightarrow \infty$ (the difference between them being of order $\mathcal{O}\left(\frac{\log(d)}{d^2}\right)$).
Evaluating the upper bound using the Stirling's formula, we arrive at:
{\small
\begin{align}
    &\int_0^{p} \dd x \log \left( \frac{\Gamma(x/2)}{\Gamma(\frac{d-p+x}{2})} \right)  =  d \alpha \int_0^1 \dd u \Bigg[ \frac{d \alpha }{2} u \log(\frac{d \alpha}{2} u) +\frac12 \log(\frac{1}{d \alpha u})  \\ \nonumber & - \frac{d (1- \alpha)+d \alpha u}{2} \log(\frac{d (1-\alpha)+ d \alpha u }{2})   + \frac{d(1-\alpha)}{2} - \frac12 \log(\frac{1}{d(1-\alpha) + d \alpha u}) + \mathcal{O}\left(\frac 1{d\alpha u+1}\right) \Bigg] .
\end{align}
}
Evaluating the integral, we arrive at:
\begin{align}
    &\int_0^{p} \dd x \log \left( \frac{\Gamma(x/2)}{\Gamma(\frac{d-p+x}{2})} \right)  =(d \alpha) \Bigg\{ -\frac{1}{8}(d \alpha) \left[ 1+ \log(4)-2\log(d\alpha) \right] + \frac12 \left[ 1- \log(d \alpha)\right]  \frac{d(1-\alpha)}{2}\\ & \nonumber+ \frac{1}{8 d \alpha} \left[ (d \alpha)(2d - d \alpha)(1+\log(4)+2 d^2(1-\alpha)^2 \log(d(1-\alpha)) \log(d(1-\alpha)) - 2 d^2 \log(d) \right]  \\ &  \nonumber + \frac{1}{d(1-\alpha)} \left[ d \log(d) - d \alpha- d(1-\alpha)\log(d(1-\alpha)) \right] \Bigg\} + \mathcal{O}\left(\frac 1{d\alpha u+1}\right).
\end{align}
After dividing by $\frac{1}{p(d-p)}= \frac{1}{d^2\alpha(1-\alpha)}$ the expression can be given as:
\begin{align}
    &\frac{1}{d^2 \alpha (1-\alpha)} \log K(\alpha d , d) = - \frac{1}{8} \frac{\alpha}{1-\alpha}\left( 1 + \log(4)- 2 \log(d \alpha)\right)+ \frac{1}{2d(1-\alpha)}(1 -\log(d \alpha)) \\ \nonumber + \frac{1}{8} \bigg[ \frac{2-\alpha}{1-\alpha}&(1+\log(4))+2 \frac{1-\alpha}{\alpha} \log(d(1-\alpha)) - \left( 2 \frac{1}{\alpha(1-\alpha)}+\frac{4}{d \alpha (1-\alpha)}+\frac{2}{d^2 \alpha(1-\alpha)}\log(d)\right) \bigg]\\  \nonumber +\frac{1}{d \alpha} \bigg[ \frac{1}{1-\alpha}&\log(d)- \frac{\alpha}{1-\alpha}-\log(d(1-\alpha)) \bigg] +\frac12 + \log(d) + \frac{\log(\alpha(1-\alpha))}{2} -\frac{\log(2)}{2} +\left(\frac{\log(d)}{d}\right) \\ \nonumber  = \frac18 \frac{- \alpha}{1- \alpha} &[1+\log(4)-2\log(\alpha)] + \frac{1}{4} \frac{\alpha}{1-\alpha} \log(d) + \frac18 \frac{2-\alpha}{1-\alpha} (1+\log(4))\\  \nonumber + \frac{1}{4} \frac{1-\alpha}{\alpha} & \log(1-\alpha) + \frac14 
    \frac{1-\alpha}{\alpha} \log(d) - \frac14 \frac{1}{\alpha(1-\alpha)} \log(d)+\frac12 + \mathcal{O}\left(\frac{\log(d)}{d}\right) \\ \nonumber  = 
    \frac34 + \frac14 & \frac{\alpha}{1-\alpha} \log(\alpha)+ \frac14 \frac{1-\alpha}{\alpha} \log(1-\alpha) +\frac{1}{2} \log(d)+ \frac12 \log(\alpha(1-\alpha))+ \mathcal{O}\left(\frac{\log(d)}{d}\right).
\end{align}
The lower bound also converges to the same value as mentioned above. 

\begin{figure}[h!]
    \centering
    \includegraphics[scale=0.6]{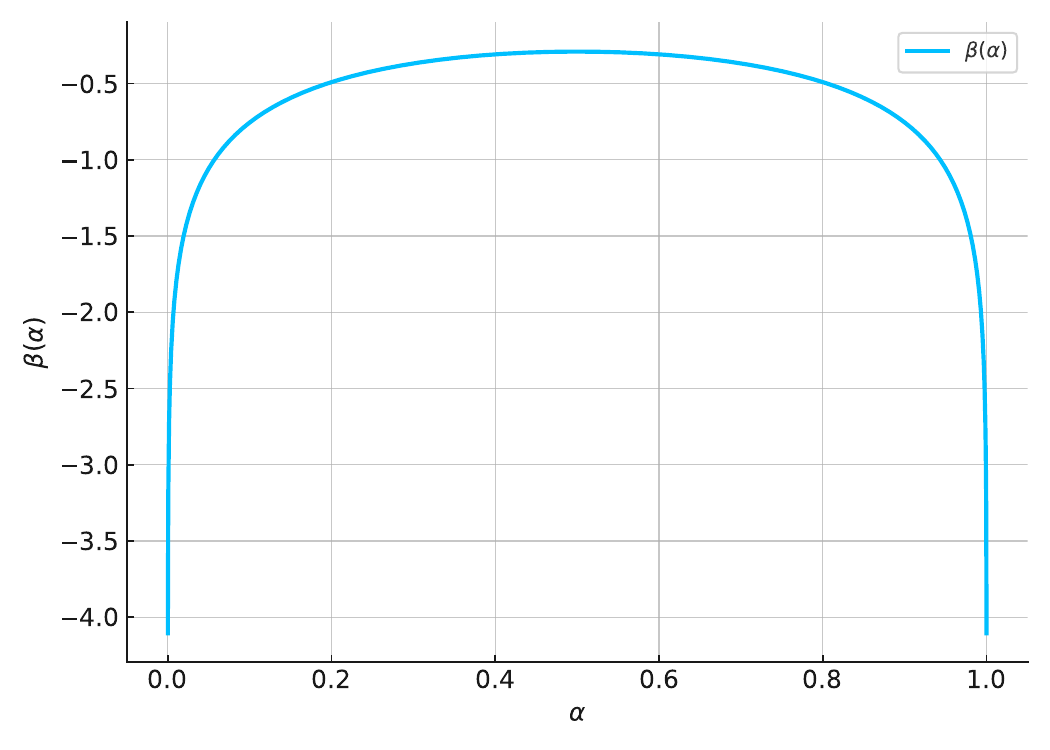}
    \caption{Plot of the function $\beta(\alpha)=\frac34 + \frac14 \frac{\alpha}{1-\alpha} \log(\alpha)+ \frac14 \frac{1-\alpha}{\alpha} \log(1-\alpha)+ \frac12 \log(\alpha(1-\alpha))$ over the interval from 0 to 1. The symmetry of the plot reflects the particle-hole duality of the injective norm of proposition \ref{prop:particle-hole-duality}.}
    \label{fig:plot}
\end{figure}
\end{proof}
\begin{lemma} \label{lem: Asympgamma}
    Let $0 \leq \alpha \leq 1$. Then, for every $d$, there exists a $\gamma_\alpha(d)$ such that:
    \begin{align}
        \frac{1}{2}\log(d)+\beta(\alpha)+ \Omega(\gamma_\alpha(d)) - \frac{\gamma^2_\alpha(d)}{2} = 0  , 
    \end{align}
with $\beta(\alpha)= \frac{1}{d^2 \alpha (1-\alpha)} \log K(\alpha d , d) - \frac12 \log(d)$, and for large $d$, $\beta(\alpha)= \frac34 + \frac14 \frac{\alpha}{1-\alpha} \log(\alpha)+ \frac14 \frac{1-\alpha}{\alpha} \log(1-\alpha) + \frac12 \log(\alpha(1-\alpha))+ \mathcal{O}\left(\frac{\log(d)}{d}\right)$. Furthermore, for large $d$, the solution scales as:
\begin{align}
    \gamma_\alpha(d)=\sqrt{\log d} +  \frac{\log \log d}{2 \sqrt{\log d}} + \frac{\beta(\alpha)}{\sqrt{\log d}} + o_{d \rightarrow \infty}(\frac
    1 {\sqrt{\log d}})  .
\end{align}
\end{lemma}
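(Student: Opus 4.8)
The plan is to prove the two assertions in turn: existence and uniqueness of $\gamma_\alpha(d)$ by a monotonicity argument, and the asymptotic expansion by dominant balance followed by a bootstrap. Throughout I set $G(u):=\Omega(u)-\tfrac{u^2}{2}$, so that the defining equation reads $G(\gamma_\alpha(d))=-\tfrac12\log d-\beta(\alpha)$. The two elementary facts about the log-potential of the semicircle $\mu$ that I would record first are that on the bulk $\Omega(u)=\tfrac{u^2}{4}-\tfrac12$ for $|u|\le 2$, and that for $u\ge 2$ one has $\Omega'(u)=m_\mu(u)=\tfrac{u-\sqrt{u^2-4}}{2}$, the Stieltjes transform of $\mu$. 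Differentiating gives
\begin{align}
    G'(u)=\begin{cases} -\tfrac{u}{2}, & 0<u<2,\\[2pt] -\tfrac{u+\sqrt{u^2-4}}{2}, & u>2,\end{cases}
\end{align}
which is strictly negative on $(0,\infty)$. Hence $G$ is continuous and strictly decreasing on $[0,\infty)$, with $G(0)=-\tfrac12$ and $G(u)\to-\infty$. By the intermediate value theorem, for every $d$ large enough that $\tfrac12\log d+\beta(\alpha)\ge\tfrac12$ there is a unique $\gamma_\alpha(d)\ge 0$ solving the equation; and since the right-hand side tends to $-\infty$, the solution satisfies $\gamma_\alpha(d)\to\infty$, so eventually $\gamma_\alpha(d)>2$ and we may work with the large-argument regime.

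Next I would establish the large-$u$ asymptotics of $\Omega$. Writing $\log(u-\lambda)=\log u+\log(1-\lambda/u)$ and using $\int\lambda\,\dd\mu(\lambda)=0$ and $\int\lambda^2\,\dd\mu(\lambda)=1$ for the semicircle yields
\begin{align}
    \Omega(u)=\log u-\frac{1}{2u^2}+\mathcal{O}(u^{-4}),\qquad u\to\infty,
\end{align}
so that $G(u)=\log u-\tfrac{u^2}{2}-\tfrac{1}{2u^2}+\mathcal{O}(u^{-4})$. Setting $L:=\log d$, the defining equation becomes
\begin{align}
    \gamma^2=L+2\beta(\alpha)+2\log\gamma+\mathcal{O}(\gamma^{-2}).
\end{align}

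Finally I would run the bootstrap. The leading balance $\gamma^2\sim L$ gives $\gamma\sim\sqrt{L}$, whence $2\log\gamma=\log L+\mathcal{O}(\log L/L)$. Substituting back produces $\gamma^2=L+2\beta(\alpha)+\log L+\mathcal{O}(\log L/L)$, and expanding the square root via $\sqrt{L}\,\sqrt{1+x}=\sqrt{L}(1+\tfrac{x}{2}+\mathcal{O}(x^2))$ with $x=\tfrac{\log L+2\beta(\alpha)}{L}$ gives
\begin{align}
    \gamma_\alpha(d)=\sqrt{L}+\frac{\log L}{2\sqrt{L}}+\frac{\beta(\alpha)}{\sqrt{L}}+o\!\left(\frac{1}{\sqrt{L}}\right),
\end{align}
which is the claimed expansion upon recalling $L=\log d$ and $\log L=\log\log d$. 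The correction $-\tfrac{1}{2\gamma^2}=\mathcal{O}(1/L)$ from $\Omega$ and the $\mathcal{O}(\log d/d)$ error inside $\beta(\alpha)$ (from Lemma \ref{lem: asym_real_Grass_vol}) both contribute to the remainder only at order $\mathcal{O}(L^{-3/2})$, hence are absorbed into $o(1/\sqrt{L})$. The main obstacle is precisely this last bookkeeping: carrying the bootstrap carefully enough to isolate the three displayed terms while proving that both the tail of the $\Omega$-expansion and the residual $d$-dependence concealed in $\beta(\alpha)$ are genuinely negligible at order $1/\sqrt{\log d}$.
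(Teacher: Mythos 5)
Your proof is correct. Note, however, that the paper itself does not write out an argument here: its entire proof is the one-line citation ``follows that of Lemma 3.20 in \cite{dartois2024injective}, with the substitution $p \rightarrow d$ and $d \rightarrow \alpha$.'' What you have produced is a self-contained version of exactly the argument that citation hides: strict monotonicity of $G(u)=\Omega(u)-u^2/2$ (via $\Omega'(u)=u/2$ on the bulk and $\Omega'(u)=\tfrac{u-\sqrt{u^2-4}}{2}$ outside), the intermediate value theorem for existence and uniqueness, the moment expansion $\Omega(u)=\log u-\tfrac{1}{2u^2}+\mathcal{O}(u^{-4})$, and a two-step bootstrap for the expansion of $\gamma_\alpha(d)$. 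Your bookkeeping at the end is also right: the $-\tfrac{1}{2\gamma^2}$ tail perturbs $\gamma$ only at order $\mathcal{O}((\log d)^{-3/2})$, and the $\mathcal{O}(\log d/d)$ drift hidden in $\beta(\alpha)$ perturbs it by $\mathcal{O}(\sqrt{\log d}/d)$, both absorbed into $o(1/\sqrt{\log d})$. One minor discrepancy with the statement as written: since $\max_{u\ge 0} G(u)=G(0)=-\tfrac12$, a nonnegative solution exists only once $\tfrac12\log d+\beta(\alpha)\ge\tfrac12$, so the lemma's claim ``for every $d$'' really means ``for every sufficiently large $d$''; your restriction is the careful reading, and it is all that is used downstream, where only the large-$d$ asymptotics matter.
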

\proof 
The proof of this lemma follows that of lemma 3.20 in \cite{dartois2024injective}, with the substitution $p \rightarrow d$ and $d \rightarrow \alpha$.  \endproof
\begin{lemma} \label{lem: LimitDeterminant}
Let $\alpha$ be fixed, and let $\gamma(d)= o(\exp(d^{1-\delta}))$ be a sequence diverging to infinity for some $\delta > 0$:
\begin{align}
            \lim_{d \rightarrow \infty} \left[ \frac{1}{d^2 \alpha (1-\alpha)} \log \int_{\gamma(d)}^\infty e^{-\frac{N}{2}u^2} \E_\text{BHGAE}[\lvert \det(W-u) \rvert] \dd u  - \left( \Omega(\gamma(d))-\frac{\gamma^2(d)}{2} \right) \right] = 0 .
\end{align}
\end{lemma}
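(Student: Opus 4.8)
\emph{Strategy.} The plan is to evaluate the integral by a Laplace-type argument localized at the left endpoint $u=\gamma(d)$. Set $N=p(d-p)$ and let $\psi(u):=\Omega(u)-\tfrac{u^2}{2}$ be the exponential rate of the integrand. The single analytic input I would use is the uniform annealed determinant asymptotic for the BHGAE ensemble: since the empirical spectral measure of $W_N$ converges in the double-scaling regime to $\mu(x)=\tfrac1{2\pi}\sqrt{(4-x^2)_+}$ (the $p\to\infty$ limit of the density $\rho_p$ of Proposition \ref{prop:assumptions_check}), the machinery verified in Proposition \ref{prop:assumptions_check} together with \cite[Theorem 4.1]{arous2022exponential} gives
\[
\frac1N\log\E_{\text{BHGAE}}\bigl[\lvert\det(W_N-u)\rvert\bigr]=\Omega(u)+o(1),
\]
with the $o(1)$ uniform for $u\ge\gamma(d)$. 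I would first record that $\psi$ is smooth and that, for $u>2$, $\psi'(u)=m(u)-u$ with $m(u)=\tfrac12\bigl(u-\sqrt{u^2-4}\bigr)$ the Stieltjes transform of $\mu$; since $0<m(u)<1$ there, $\psi'(u)\le -(u-1)\le -\tfrac u2$, so $\psi$ is strictly decreasing on $[\gamma(d),\infty)$ once $\gamma(d)>2$, and $\psi(u)\le\psi(\gamma(d))-\tfrac{\gamma(d)}2\bigl(u-\gamma(d)\bigr)$.

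\emph{Upper bound.} Writing the integrand as $e^{N\psi_N(u)}$ with $\psi_N(u)=\psi(u)+o(1)$ uniformly, the monotonicity bound above gives
\[
\int_{\gamma(d)}^\infty e^{N\psi_N(u)}\dd u\le e^{o(N)}e^{N\psi(\gamma(d))}\int_{\gamma(d)}^\infty e^{-\frac N2\gamma(d)\,(u-\gamma(d))}\dd u=\frac{2\,e^{N\psi(\gamma(d))+o(N)}}{N\gamma(d)}.
\]
Taking $\tfrac1N\log$ and noting that $\tfrac1N\log\bigl(N\gamma(d)\bigr)\to0$ — here the hypothesis $\gamma(d)=o(\exp(d^{1-\delta}))$ ensures $\log\gamma(d)=o(N)$ — yields the upper bound $\psi(\gamma(d))+o(1)$.

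\emph{Lower bound.} I would restrict the integral to the short interval $I_d:=[\gamma(d),\gamma(d)+1/(N\gamma(d))]$. On $I_d$ one has $\psi(u)\ge\psi(\gamma(d))-O(1/N)$ by the mean value theorem (since $\lvert\psi'\rvert\asymp\gamma(d)$ there), and $\psi_N(u)\ge\psi(u)-o(1)$ by the determinant asymptotic, so
\[
\int_{\gamma(d)}^\infty e^{N\psi_N(u)}\dd u\ge\lvert I_d\rvert\,\inf_{u\in I_d}e^{N\psi_N(u)}\ge\frac{1}{N\gamma(d)}\,e^{N\psi(\gamma(d))-o(N)},
\]
whence $\tfrac1N\log$ of the integral is at least $\psi(\gamma(d))-o(1)$. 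Combining the two bounds proves the lemma.

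\emph{Main obstacle.} The delicate point is the \emph{uniformity} of the annealed determinant asymptotic along the diverging endpoint $u=\gamma(d)\to\infty$, rather than at a fixed $u$ outside the bulk. The lower bound is the gentle direction (Jensen together with the convergence of the moments of the spectral measure of $W_N$ to those of $\mu$). For the upper bound one must control the contribution of the rare event $\{\lnorm W_N\rnorm>2+\epsilon\}$: splitting with Cauchy--Schwarz,
\[
\E\bigl[\lvert\det(W_N-u)\rvert\,\mathbf 1_{\lnorm W_N\rnorm>2+\epsilon}\bigr]\le\E\bigl[\det(W_N-u)^2\bigr]^{1/2}\,\P\bigl(\lnorm W_N\rnorm>2+\epsilon\bigr)^{1/2},
\]
where Proposition \ref{prop:operator_norm_bound} supplies $\P(\lnorm W_N\rnorm>2+\epsilon)\le e^{-N^{1-\delta'}}$, so one needs the second-moment rate $\tfrac1{2N}\log\E[\det(W_N-u)^2]=\Omega(u)+o(1)$ with an error term that is $o(N^{1-\delta'})$ rather than merely $o(N)$, i.e.\ genuine concentration of the log-determinant for $u$ outside the support. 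Granting this, the tail is negligible against the main term and the Laplace analysis above goes through unchanged.
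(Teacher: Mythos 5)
Your proposal is correct and takes essentially the same route as the paper: the paper's own proof consists of invoking the Laplace method for the Kac--Rice integral and deferring all details to \cite[Lemma 3.22]{dartois2024injective}, and your endpoint-localized Laplace argument (linear decay of $\psi(u)=\Omega(u)-u^2/2$ beyond $\gamma(d)$, the short-interval lower bound, and Cauchy--Schwarz plus operator-norm tails to get the annealed determinant asymptotics uniformly in $u\ge\gamma(d)$) is exactly that machinery made explicit. One quibble in your obstacle paragraph: since the lemma only concerns the normalized rate $\tfrac1N\log$, a tail contribution of the same exponential order as the main term is already harmless, so it suffices that the second-moment rate matches $\Omega(u)$ with $o(N)$ error in the logarithm; the stronger $o(N^{1-\delta'})$ concentration you ask for is not actually needed.
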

\proof
The key idea is to apply the Laplace method to the Kac-Rice integral. The details follow the proof of lemma 3.22 in \cite{dartois2024injective}. \endproof
\section{Complex Case}
The structure of the proof in the complex case closely parallels that of the real case. We propose an alternative but equivalent definition of the injective norm, which is more convenient to work with.  Moreover, we demonstrate that this definition admits an upper bound via the Kac-Rice formula.
 As in the real case, due to the homogeneity of the Hessian on the complex Grassmannian shown in lemma \ref{lem:homogeneity}, the Kac-Rice integral can be decomposed into two parts: a volume contribution and a determinant contribution arising from the integrand. Each of these contributions can then be evaluated analogously to the real case.\\
\begin{lemma} \label{eq-comp-def}
    Let T be a skew-symmetric d-dimensional random tensor of order $p$. The function $f_T(x^{(1)},\dots,x^{(p)})=\frac{|\langle T, x^{(1)}\wedge \ldots \wedge x^{(p)}\rangle|}{\lnorm x^{(1)}\wedge \ldots \wedge x^{(p)}\rnorm_2}$ has the same maximum as $g_T(x^{(1)},\dots,x^{(p)})=\frac{\Re{\langle T, x^{(1)}\wedge \ldots \wedge x^{(p)}\rangle}}{\lnorm x^{(1)}\wedge \ldots \wedge x^{(p)}\rnorm_2}$ and thus the injective norm of $T$ can be given as the maximum of $g_T$.
\end{lemma}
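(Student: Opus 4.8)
The plan is to prove the identity of the two maxima purely by a phase–rotation argument, exploiting the fact that the denominator $\lnorm x^{(1)}\wedge\cdots\wedge x^{(p)}\rnorm_2$ is invariant under multiplying any single factor by a complex number of modulus one, whereas the phase of the numerator $\langle T, x^{(1)}\wedge\cdots\wedge x^{(p)}\rangle$ can be adjusted at will by such a rotation. Since both $f_T$ and $g_T$ are invariant under rescaling any $x^{(i)}$ by a nonzero scalar (they are ratios homogeneous of degree zero), it suffices to compare their suprema over all nonzero decomposable configurations, and the claim reduces to showing $\sup f_T=\sup g_T$.

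The inequality $\sup g_T\le\sup f_T$ is immediate: for every configuration one has $\Re\langle T, x^{(1)}\wedge\cdots\wedge x^{(p)}\rangle\le \lvert\langle T, x^{(1)}\wedge\cdots\wedge x^{(p)}\rangle\rvert$, hence $g_T\le f_T$ pointwise. For the reverse inequality I would argue as follows. Fix $\epsilon>0$ and pick a configuration $(x^{(1)},\dots,x^{(p)})$ with $f_T(x^{(1)},\dots,x^{(p)})>\sup f_T-\epsilon$, writing $\langle T, x^{(1)}\wedge\cdots\wedge x^{(p)}\rangle=\rho\,e^{\ii\theta}$ with $\rho\ge 0$. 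Replacing $x^{(1)}$ by $e^{-\ii\theta}x^{(1)}$ and using multilinearity of the wedge product, the decomposable tensor picks up exactly the factor $e^{-\ii\theta}$, so the numerator becomes $\Re(e^{-\ii\theta}\rho\,e^{\ii\theta})=\rho=\lvert\langle T, x^{(1)}\wedge\cdots\wedge x^{(p)}\rangle\rvert$, while the denominator is unchanged because $\lvert e^{-\ii\theta}\rvert=1$. Thus at the rotated point $g_T$ equals the value of $f_T$ at the original point, giving $\sup g_T>\sup f_T-\epsilon$. Letting $\epsilon\to 0$ yields $\sup g_T\ge\sup f_T$, and combined with the previous paragraph we conclude $\sup f_T=\sup g_T=\injnorm{T}$.

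I do not expect any genuine obstacle here: the argument is elementary and the only points requiring care are (i) fixing the convention for the Hermitian pairing so that rescaling a single factor by $e^{-\ii\theta}$ produces the correct phase on $\langle T,\cdot\rangle$ (if the pairing is conjugate-linear in the second slot one simply rotates by $e^{\ii\theta}$ instead), and (ii) observing that the wedge-norm is invariant under such unit-modulus rescalings, which is clear from $\lnorm x^{(1)}\wedge\cdots\wedge x^{(p)}\rnorm_2^2=\lvert\det(X^{t}X)\rvert$ together with multilinearity. One may equivalently phrase the reverse bound using an exact maximizer: the supremum of $f_T$ is attained because, after restricting to the compact Grassmannian $\mathrm{Gr}(p,d)$ via the scale invariance, $f_T$ descends to a continuous function on a compact set, so one can rotate an actual optimizer rather than an $\epsilon$-optimal sequence.
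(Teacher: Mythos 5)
Your proposal is correct and follows essentially the same approach as the paper's proof: both establish $g_T \le f_T$ pointwise and then absorb the phase of $\langle T, x^{(1)}\wedge\cdots\wedge x^{(p)}\rangle$ into a single factor $x^{(1)}$ via a unit-modulus rotation, which leaves the wedge-norm unchanged. Your additional care with $\epsilon$-optimal sequences and the pairing convention is fine but not needed, since (as you note) compactness of the Grassmannian guarantees an exact maximizer, which is what the paper rotates directly.
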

\begin{proof} First, observe that for any set of vectors $\{x^{(1)}, \dots , x^{(p)} \}$ in $\C^d$, we have $f_T(x^{(1)},  \dots , x^{(p)}) \geq g_T(x^{(1)}, \dots , x^{(p)}).$ Therefore, it suffices to show that there exists $\{y^{(i)}\}_i$ such that $g_T(y^{(1)}, \dots , y^{(p)})=\max f_T$. Let $\{x^{(i)}\}_i$ maximize  $f_T$ and let $\theta$ denote the phase of $\langle T, x^{(1)}\wedge \ldots \wedge x^{(p)}\rangle$ before taking the absolute value. Thus setting $y^{(1)}=e^{-i\theta}x^{(1)}$ and $y^{(j)}=x^{(j)}$ provides the set such that $g_T(y^{(1)}, \dots , y^{(p)})=\max f_T$. 
\end{proof}

\begin{lemma} \label{lem:asymptotics_constant_KR_complex}
    For fixed p and d and denoting $N=2p(d-p)$, and defining
    \begin{align}
        L(p,d)=\frac{\pi^{p(d-p)} \prod_{j=1}^p \Gamma(j)\prod_{j=1}^{d-p} \Gamma(j)}{\prod_{j=1}^d \Gamma(j)} \left({\frac{p(d-p)}{ 2\pi}}\right)^{p(d-p)+1} \label{eq:complex_volume}.
    \end{align}
Then we have:
\begin{align}
    \E\left[ \text{Crit}_{g_T}(D)\right] \leq L(p,d) \int_D e^{-Nu^2} \E_\text{cBHGAE}\left[|\det(V_N-u)|\right].
\end{align}
where $V_N$ is a $ \text{cBHGAE}(p,d,\frac{1}{2p(d-p)})$ matrix as defined in \ref{def:complex BHGAE}.
\end{lemma}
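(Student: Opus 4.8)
The plan is to transcribe the real-case argument of Proposition~\ref{prop:upper-KR} to the complex Grassmannian, replacing $f_T$ by $g_T$ throughout. First I would apply the Kac--Rice formula to the smooth Gaussian field $g_T$ on $\text{Gr}(p,d)$, whose real dimension is $N=2p(d-p)$. This writes $\E[\text{Crit}_{g_T}(D)]$ as an integral over the manifold of the conditional expectation of $|\det \text{Hess}\,g_T|$ against the density at $0$ of the Riemannian gradient, restricted to the event $\{g_T \in D\}$.

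The second step removes the manifold integral. Since $T$ is a complex Gaussian tensor whose law is invariant under the $U(d)$ action and the pairing $X\mapsto \langle T, x^{(1)}\wedge\cdots\wedge x^{(p)}\rangle$ is equivariant, the joint law of the triple (value, gradient, Hessian) is the same at every point; this homogeneity is the content of Lemma~\ref{lem:homogeneity}. I would therefore evaluate the integrand at the reference north pole (Appendix~\ref{AppA}) and factor out the volume of the complex Grassmannian as an overall prefactor.

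Third, I would identify the three resulting factors. The volume of $\text{Gr}(p,d)$ over $\C$, written as $\text{Vol}(U(d))/(\text{Vol}(U(p))\text{Vol}(U(d-p)))$, produces the ratio $\prod_{j=1}^p\Gamma(j)\prod_{j=1}^{d-p}\Gamma(j)/\prod_{j=1}^d\Gamma(j)$ and the power $\pi^{p(d-p)}$ appearing in $L(p,d)$; note that the complex geometry yields integer Gamma arguments, in contrast to the half-integer $\Gamma(i/2)$ of the real case. The Gaussian density of the gradient at $0$ supplies the prefactor $(p(d-p)/2\pi)^{p(d-p)+1}$ and the value-weight $e^{-Nu^2}$; the correlation functions needed are those computed in Appendix~\ref{AppB}, which, as remarked there, are preserved under the Hodge pushforward. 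Finally, conditioning the Hessian on $\{\nabla g_T=0,\ g_T=u\}$, its fluctuating part is, after the deterministic diagonal shift by $u$, distributed as a $\text{cBHGAE}(p,d,\tfrac1{2p(d-p)})$ matrix $V_N$ of Definition~\ref{def:complex BHGAE}. The real and imaginary parts of the second covariant derivatives assemble into the two blocks of $V_N$, and the real-part functional $g_T$ is what forces the relative sign producing the $-A$ block, so the determinant factor becomes $\E_{\text{cBHGAE}}[|\det(V_N-u)|]$.

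The inequality, rather than an equality, comes exactly as in the real case: conditioning on the critical value $u$ on a curved manifold introduces, beyond the diagonal shift, a sign-definite correction to the Hessian coming from the correlation between $g_T$ and its second derivatives, and discarding this non-negative contribution bounds the conditional $|\det|$ by the unconditional cBHGAE average. I expect the main obstacle to be precisely the correlation-function computation underlying this last identification: one must verify that the conditioned complex Hessian reproduces the cBHGAE block structure of Definition~\ref{def:complex BHGAE}, with the correct $-A$ entry, tracking carefully the interplay between the complex structure of $\text{Gr}(p,d)$ and the real-part functional $g_T$. Once this is in place, the remaining steps are a routine transcription of Proposition~\ref{prop:upper-KR} with $p(d-p)$ replaced by $2p(d-p)$ and the real Grassmannian volume replaced by its complex counterpart.
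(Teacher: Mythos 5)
Your proposal takes essentially the same route as the paper: the paper's proof of this lemma is simply the observation that it is Proposition~\ref{prop:upper-KR} transcribed to the complex setting, the only change being that the complex Grassmannian volume $\mathrm{Vol}(U(d))/(\mathrm{Vol}(U(p))\mathrm{Vol}(U(d-p)))$ from Appendix~\ref{AppA} replaces the real one, with the conditioned Hessian becoming cBHGAE via the complex correlation computations of Appendix~\ref{AppB} --- exactly the steps you lay out. Your concluding speculation about the precise mechanism producing the inequality goes beyond what the paper states (it defers entirely to the real-case argument), but it does not alter the structure or validity of the proof.
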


\proof This lemma closely resembles Proposition \ref{prop:upper-KR}. The only difference lies in the replacement of the real Grassmannian with the complex Grassmannian, which leads us to use $\text{Vol}(U(n))$ instead of $\text{Vol}(O(n))$, as shown in Appendix \ref{AppA}. Any other detail remains the same. 
\endproof

\subsection{Complex, fixed $p$, $d \rightarrow \infty$}
\begin{theorem}[Complex, skew-symmetric, $p$-fixed, $d \rightarrow \infty$] 
    Let $T\in \bigwedge^p\C^d$ be a random skew-symmetric tensor with i.i.d. $\mathcal{N}_\C(0,1)$ entries. Then  for every $\epsilon > 0 $, we have:
\begin{align}
    \limsup_{d \rightarrow \infty} \frac1{p(d-p)} \log \P \left( \frac{\injnorm{T}}{\sqrt{p(d-p)}} > E_0(p)+\epsilon \right) < 0.
\end{align}
Define $\ket{\psi_f}:=\frac{T}{\lnorm T\rnorm_{2}}$. Then, 
\begin{align}
\limsup_{d\rightarrow \infty}\frac1{p(d-p)}\log \P\left(\injnorm {\ket{\psi_f}}\ge\frac1{d^{\frac{p-1}{2}}}(\alpha(p)+\epsilon)\right)\le0.
\end{align}
\end{theorem}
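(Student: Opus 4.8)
The plan is to run the same Kac-Rice complexity argument as in the real fixed-$p$ case, now on the complex Grassmannian and with the real-valued field $g_T$ in place of $f_T$. First I would use Lemma \ref{eq-comp-def} to identify $\injnorm{T}$ with $\max g_T$. Because the complex Grassmannian is compact, whenever the constrained maximum of $g_T$ exceeds a level $c$ the maximizer is a critical point of $g_T$ above $c$; hence $\{\max g_T > c\}\subseteq\{\Crt_{g_T}((c,\infty))\ge 1\}$ and Markov's inequality gives
\begin{align*}
\P\left(\frac{\injnorm{T}}{\sqrt{p(d-p)}} > E_0(p)+\epsilon\right) \le \E\left(\Crt_{g_T}(B_\epsilon)\right),
\end{align*}
where $B_\epsilon$ is the half-line of $g_T$-levels that, after the normalization $\frac1{\sqrt{2p(d-p)}}$ built into $g_T$, corresponds to $\injnorm{T}/\sqrt{p(d-p)} > E_0(p)+\epsilon$. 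This reduces the first display to an upper bound on the annealed complexity $\log\E(\Crt_{g_T}(B_\epsilon))$.

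Next I would invoke the complex Kac-Rice bound of Lemma \ref{lem:asymptotics_constant_KR_complex}, which writes $\E(\Crt_{g_T})$ as the constant $L(p,d)$ times an integral of $e^{-Nu^2}\,\E_{\text{cBHGAE}}(|\det(V_N-u)|)$ with $N=2p(d-p)$ and $V_N$ a $\text{cBHGAE}$ matrix of variance $\frac1{2p(d-p)}$. To extract the exponential rate of this integral I would apply \cite[Theorem 4.1]{arous2022exponential} to the family $H_N(u)=u-V_N$, exactly as Proposition \ref{prop:assumptions_check} does in the real case, which requires re-checking the four hypotheses (control of the Wasserstein distance, concentration of the Lipschitz trace, the eigenvalue-gap condition, and the determinant bound) for the block ensemble $V_N$. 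The Herbst concentration and determinant estimates transfer essentially verbatim; the gap condition is again obtained by restricting to the good event $\{\lnorm V_N\rnorm \le 2\sqrt{(p-1)/p}+\delta\}$, whose probability tends to one by Proposition \ref{prop:complex_operator_bound}.

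The main obstacle is the Wasserstein/local-law hypothesis for the structured matrix $V_N=(E_{11}-E_{22})\otimes A+(E_{21}+E_{12})\otimes B$: one must show that its empirical spectral measure converges, in $W_1$ at a polynomial rate, to the same density $\rho_p$ governing the real BHGAE. I would cast $V_N$ as a Kronecker random matrix in the sense of \cite{alt2019location} and solve the associated matrix Dyson equation; the Pauli-type block structure makes the self-energy operator $\mathscr{S}$ act as a scalar multiple of the identity on the relevant flat ansatz, so that, as in the real computation, the scalar self-consistent equation again produces the Stieltjes transform $m_p(z)$ of $\rho_p$. Granting this, $L(p,d)$ has the same exponential growth rate as $K(p,d)$ (a Stirling computation parallel to Lemma \ref{lem:asymptotics_constant_KR}), the determinant contributes the log-potential $\Omega$ of $\rho_p$, and the Gaussian weight contributes the quadratic term; assembling these reproduces, up to the factor $\sqrt 2$ built into the normalization of $g_T$, the complexity profile $\Sigma_\R^{(p)}$ of Lemma \ref{lem:annealed_complexity_real}, whose unique positive zero corresponds to the threshold $E_0(p)$ in the normalization of the theorem. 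Hence $\sup_{u\in B_\epsilon}\Sigma<0$, which yields the first display.

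Finally, for the normalized state I would combine the bound on $\injnorm{T}$ with the concentration of the Hilbert-Schmidt norm from Lemma \ref{lem: Frob. norm concen.}: in the complex case $\E(\lnorm T\rnorm_2^2)=d^p(1+o(1/d))$, so $\lnorm T\rnorm_2$ is bounded below by $d^{p/2}(1+o(1))$ with overwhelming probability. Since $\injnorm{\ket{\psi_f}}=\injnorm{T}/\lnorm T\rnorm_2$ and $\sqrt{p(d-p)}/\sqrt{d}\to\sqrt p$ for fixed $p$, the typical upper bound $\sqrt{p(d-p)}\,E_0(p)/d^{p/2}$ becomes $d^{-(p-1)/2}\sqrt p\,E_0(p)$, identifying the stated constant as $\alpha(p)=\sqrt p\,E_0(p)$. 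A union bound over the exponentially unlikely failure of the norm concentration and of the injective-norm bound then gives the second display.
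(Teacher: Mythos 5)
Your proposal is correct and follows essentially the same route as the paper: reduce to $\max g_T$ via Lemma \ref{eq-comp-def}, apply the complex Kac-Rice bound of Lemma \ref{lem:asymptotics_constant_KR_complex} together with Markov's inequality, verify the hypotheses of \cite[Theorem 4.1]{arous2022exponential} for the cBHGAE exactly as in Proposition \ref{prop:complex_assumptions_check} (including the matrix Dyson equation computation yielding $\rho_p$ and the gap condition from Proposition \ref{prop:complex_operator_bound}), match the volume and determinant asymptotics to the real complexity profile, and conclude the normalized statement from Lemma \ref{lem: Frob. norm concen.}. Your explicit identification $\alpha(p)=\sqrt{p}\,E_0(p)$ from the scaling $\sqrt{p(d-p)}/d^{p/2}$ is a detail the paper leaves implicit but is consistent with its numerical section.
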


\begin{lemma}[$d \rightarrow \infty $ limit of volume factor]
    In the limit $d \rightarrow \infty$ for a fixed $p$ the volume factor $L(p,d)$ tends toward:
    \begin{align}
        \lim_{d \rightarrow \infty} \frac1{2p(d-p)} \log L(p,d)= \frac{1+\log p}{2}.
    \end{align}
\begin{proof}
The proof of this lemma is very similar to that of Lemma \ref{lem:asymptotics_constant_KR} with the only difference in the fact that the non-$\Gamma$ contributions in eq. \eqref{eq:complex_volume} appear with a power of two compared to Lemma \ref{lem:asymptotics_constant_KR}, while the arguments of the $\Gamma$-contributions  are scaled by the factor of two. A straightforward calculation shows that these differences yield an overall factor of two in $\log L$ which is canceled by dividing by $2p(d-p)$ (as appropriate in the complex case) rather than $p(d-p)$. Thus, the limiting expression matches that of the real case.
\end{proof}
\end{lemma}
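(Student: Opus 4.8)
The plan is to mirror the computation of Lemma~\ref{lem:asymptotics_constant_KR} line by line, isolating the structurally distinct pieces of $\log L(p,d)$ and dividing by $2p(d-p)$ rather than $p(d-p)$. Writing $N=p(d-p)$, I would first take the logarithm of \eqref{eq:complex_volume} and group it into a ``non-$\Gamma$'' block $N\log\pi+(N+1)\log\frac{N}{2\pi}$ together with the ``$\Gamma$-ratio'' $\sum_{j=1}^{p}\log\Gamma(j)+\sum_{j=1}^{d-p}\log\Gamma(j)-\sum_{j=1}^{d}\log\Gamma(j)$. The non-$\Gamma$ block is exactly the square of the corresponding block in $K(p,d)$ — the power of $\pi$ is doubled and the prefactor has lost its square root — so after division by $2N$ it reproduces, term for term, what that block contributed to $\tfrac1N\log K(p,d)$; collecting the $\log\pi$ terms it tends to $\tfrac12\log N$ up to the constants carried by the $2\pi$ in the denominator.

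For the $\Gamma$-ratio, the first sum is constant in $d$ for fixed $p$, so $\tfrac{1}{2N}\sum_{j=1}^{p}\log\Gamma(j)\to 0$, and the remaining two sums telescope to $-\sum_{j=d-p+1}^{d}\log\Gamma(j)$, a sum of exactly $p$ terms. As in Lemma~\ref{lem:asymptotics_constant_KR}, I would sandwich it using the monotonicity of $\log\Gamma$ near $x=d$,
\[
\frac{1}{d-p}\log\Gamma(d-p+1)\ \le\ \frac{1}{N}\sum_{j=d-p+1}^{d}\log\Gamma(j)\ \le\ \frac{1}{d-p}\log\Gamma(d),
\]
and evaluate the two endpoints with Stirling, $\log\Gamma(x)=(x-\tfrac12)\log x-x+\tfrac12\log(2\pi)+O(1/x)$. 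Since $d/(d-p)\to1$ for fixed $p$, both bounds converge to the same value, so the $\Gamma$-ratio contributes $-\tfrac12\log d+\tfrac12+o(1)$ to $\tfrac1{2N}\log L(p,d)$.

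Adding the two blocks, the divergent $\tfrac12\log N\sim\tfrac12\log d$ coming from the power factor cancels the $-\tfrac12\log d$ coming from the $\Gamma$-ratio, leaving a finite constant, and the remaining $\log\pi$ contributions cancel as well. The step I expect to be the real obstacle is the exact bookkeeping of the $\log 2$ terms. Unlike the non-$\Gamma$ block, the integer-argument factors $\Gamma(j)$ are \emph{not} the squares of the half-integer factors $\Gamma(j/2)$ of $K(p,d)$: the Legendre duplication formula $\Gamma(j)=2^{\,j-1}\pi^{-1/2}\Gamma(j/2)\Gamma(\tfrac{j+1}{2})$ inserts a $2^{\,j-1}$ factor, so the naive ``overall factor of two'' heuristic is not literally an identity. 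The crux is therefore to verify that the $-\tfrac12\log 2$ produced by the $2\pi$ in the prefactor is exactly matched by the constants emerging from Stirling applied to the integer-argument $\Gamma$'s, so that the residual constant collapses to the claimed $\tfrac12(1+\log p)$; everything else is a direct transcription of the real-case argument.
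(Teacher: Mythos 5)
Your decomposition and your two block computations are correct, and they are essentially the paper's own route: the non-$\Gamma$ block contributes $\tfrac12\log N-\tfrac12\log 2+o(1)$ to $\tfrac1{2N}\log L(p,d)$ (after the $\log\pi$ cancellation, with $N=p(d-p)$), and the $\Gamma$-ratio contributes $-\tfrac12\log d+\tfrac12+o(1)$. But the step you postpone as "the real obstacle" is not a bookkeeping detail to be checked later — it is where the proof breaks, and the cancellation you hope for provably does not occur. The Stirling constants $\tfrac12\log(2\pi)$ enter once per term of the $p$-term sum $\sum_{j=d-p+1}^{d}\log\Gamma(j)$, hence contribute $O\bigl(p\log d/(p(d-p))\bigr)\to 0$ after normalization; they cannot produce a finite $+\tfrac12\log 2$. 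In the real case the compensating $+\tfrac12\log 2$ comes from the \emph{leading} Stirling term of the half-integer argument, $\tfrac j2\log\tfrac j2=\tfrac j2\log j-\tfrac j2\log 2$; with integer arguments, $\log\Gamma(j)= j\log j-j+O(\log j)$ has no such term. This is precisely the duplication-formula discrepancy you identified, $\log\Gamma(j)=2\log\Gamma(j/2)+j\log 2+O(\log j)$, and summed over the $p$ values $j\approx d$ it shifts the normalized limit by exactly $-\tfrac12\log 2$.

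Consequently, adding your own two blocks gives $\lim_{d\to\infty}\tfrac{1}{2p(d-p)}\log L(p,d)=\tfrac{1+\log p}{2}-\tfrac{\log 2}{2}$ for $L(p,d)$ exactly as printed in \eqref{eq:complex_volume}: the formula and the claimed limit are mutually inconsistent by $\tfrac{\log 2}{2}$, and an honest completion of your argument should state this rather than defer it. For what it is worth, the paper's own one-line proof suffers from the same defect — doubling the argument of $\Gamma$ does not double $\log\Gamma$, so "an overall factor of two in $\log L$" is not literally correct — and the likeliest resolution is a typo in \eqref{eq:complex_volume}: the Kac--Rice gradient density on the $2p(d-p)$-real-dimensional complex Grassmannian, with gradient covariance $\tfrac{1}{2p(d-p)}\Id$, is $\bigl(\tfrac{p(d-p)}{\pi}\bigr)^{p(d-p)}$ rather than $\bigl(\tfrac{p(d-p)}{2\pi}\bigr)^{p(d-p)}$, and with $\pi$ in place of $2\pi$ your computation yields exactly $\tfrac{1+\log p}{2}$. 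So your approach is the right one, but as proposed the proof is incomplete at the one point that matters, and the verification you postpone fails as stated.
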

Next, we verify that all the necessary properties established in the real case also hold for a cBHGAE matrix.
\begin{proposition}\label{prop:complex_assumptions_check}
Let $H_N(u)=u-W_N$, where $W_N$ is a cBHGAE random matrix. Then, properties \ref{enum:prop_Wass}, \ref{enum:prop_Lip-Trace}, \ref{enum:prop_bound_determinant} are satisfied for all $u\in \R.$  Assuming $u\in \mathcal{K}\subset \mathcal{A}$, with $\mathcal{K}$ closed, property \ref{enum:prop_gap} is also satisfied.  
\end{proposition}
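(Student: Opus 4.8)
The plan is to mirror the proof of Proposition \ref{prop:assumptions_check} almost line by line, checking each of the four properties for the complex ensemble cBHGAE, and exploiting the fact—established in Proposition \ref{prop:complex_operator_bound}—that $V_N$ carries exactly the same spectral edge and variance parameters ($\sigma(V_N)^2=\frac{p-1}{p}\frac{d-p-1}{d-p}$, operator-norm edge $2\sqrt{(p-1)/p}$) as its real counterpart $W_N$.

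First I would dispatch the two properties that transfer with no structural change. For \textbf{Property \ref{enum:prop_Lip-Trace}} (concentration of the Lipschitz trace), the Herbst argument of \cite[Lemma 2.3.3 \& Theorem 2.3.5]{anderson2010introduction} applies once one checks that the map sending the independent Gaussian entries of the two blocks $A,B$ to $\Tr(f(V_N))$ is Lipschitz with constant $\OO(\sqrt N \lnorm f \rnorm_{\text{Lip}})$; the doubling of the number of independent coordinates (coming now from both $A$ and $B$) only affects constants, and the required bound $\sup_N \E[\lnorm V_N\rnorm]<\infty$ is furnished by Proposition \ref{prop:complex_operator_bound}. For \textbf{Property \ref{enum:prop_bound_determinant}} (the subexponential bound $\E(\lvert\det(H_N(u))\rvert)\le (C\max(\lvert u\rvert,1))^N$), the argument of \cite[Lemma 4.4]{arous2024landscape} and \cite[Lemma 3.18]{dartois2024injective} carries over unchanged, the map $u\mapsto H_N(u)=u-V_N$ being once more a continuous diagonal translation.

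The heart of the matter is \textbf{Property \ref{enum:prop_Wass}}, the local law and the control of the Wasserstein distance. Here the plan is to recognize $V_N$ as a Kronecker random matrix in the sense of \cite[Definition 2.1]{alt2019location}: writing $V_N=(E_{11}-E_{22})\otimes A+(E_{12}+E_{21})\otimes B$ and expanding each of $A,B$ in the basis $\beta^{(p)}_{ij}\otimes G$ as in the real case, one obtains a single Kronecker matrix on $\C^2\otimes\C^p\otimes\C^{d-p}$, i.e.\ with flavor dimension $L=2p$ and internal dimension $d-p$, built from the two independent Gaussian ingredients $A,B$. I would then compute the self-energy operator $\mathscr{S}$ on the $2p$-dimensional flavor space; because $A$ and $B$ are independent and enter through the Pauli-type coefficients $E_{11}-E_{22}$ and $E_{12}+E_{21}$, the self-consistent equation \eqref{MDE1}--\eqref{MDE2} is expected to decouple and collapse to the same scalar fixed-point equation as in the real case, yielding a solution $m(z)=m_p(z)\Id$ with $m_p(z)=\frac{-z+\sqrt{z^2-4(p-1)/p}}{2(p-1)/p}$ and the same limiting density $\rho_p$. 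Feeding this into the local law \cite[(B.5)]{dartois2024injective} then reproduces the $N^{-\kappa}$ bound on $W_1(\E(\mu_{H_N(u)}),\mu_p(u))$ exactly as in \cite[Lemma 3.16]{dartois2024injective}.

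Finally, \textbf{Property \ref{enum:prop_gap}} (the spectral gap away from zero) is handled precisely as in the real case: restricting to the high-probability event $\mathcal{E}_{\mathcal K}=\{\lnorm V_N\rnorm\le 2\sqrt{(p-1)/p}+\delta_{\mathcal K}\}$ with $\delta_{\mathcal K}=d(\mathcal K,\bar{\mathcal A})$, the spectrum of $H_N(u)=u-V_N$ remains at distance at least $\delta_{\mathcal K}$ from $0$ for every $u\in\mathcal K\subset\mathcal A$, while Proposition \ref{prop:complex_operator_bound} gives $\lim_{N\to\infty}\P(\mathcal E_{\mathcal K})=1$. The main obstacle I anticipate is purely in Property \ref{enum:prop_Wass}: verifying that the correlated block structure of $V_N$—the same $A$ appearing in the $(1,1)$ and $(2,2)$ blocks with opposite sign, and the same $B$ in both off-diagonal blocks—is correctly encoded in $\mathscr{S}$, so that the Kronecker local law of \cite{alt2019location} genuinely applies and the fixed-point equation indeed reduces to the real-case scalar equation. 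Everything else is a routine adaptation of Proposition \ref{prop:assumptions_check}.
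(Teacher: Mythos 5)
Your proposal is correct and follows essentially the same route as the paper: properties \ref{enum:prop_gap} and \ref{enum:prop_bound_determinant} transfer directly, the Herbst argument for property \ref{enum:prop_Lip-Trace} is anchored by Proposition \ref{prop:complex_operator_bound}, and for property \ref{enum:prop_Wass} the paper writes $V_N$ exactly in the Kronecker form you propose, with structure matrices $\eta_0\otimes\beta^{(p)}_{ij}$ and $\eta_1\otimes\beta^{(p)}_{ij}$ where $\eta_0=E_{11}-E_{22}$ and $\eta_1=E_{12}+E_{21}$, i.e.\ flavor dimension $2p$. The "obstacle" you flag is resolved just as you anticipate: since $\eta_s^2=\Id$, the self-energy operator collapses to $\mathscr{S}_i[c\Id]=\frac{p-1}{p}c\Id$, giving the same scalar Dyson equation, the same $m_p(z)$ and density $\rho_p$ as in the real case.
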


\begin{proof}
\textbf{Property 3 and 4} carry over directly from the real case. The Herbst argument for \textbf{property 2} requires  the boundedness of the operator norm of cBHGAE, which follows from proposition \ref{prop:complex_operator_bound}. The rest of the argument remains unchanged.  \\
Regarding \textbf{property 1}, in the complex case, the matrix Dyson equations are derived the same way as in proposition \ref{prop:assumptions_check}, using a method analogous to \cite{dartois2024injective}. For all $i<j$, let $\beta_{ij}^{(p)}:=E_{ij}^{(p)}-E_{ji}^{(p)}$ as for the real case, then the cBHGAE is defined as:
\begin{align*}
W_N={\frac1{\sqrt{4p(d-p)}}}\sum_{s=0,1}\sum_{1\le i<j\le p}\eta_s \otimes\beta_{ij}^{(p)}\otimes G^{(s)}_{ij}+(\eta_s \otimes\beta^{(p)}_{ij})^t\otimes G_{ij}^{(s)t},
\end{align*}
with $\eta_0=\begin{pmatrix}
1 & 0 \\
0 & -1
\end{pmatrix}$ and $\eta_1=\begin{pmatrix}
0 & 1 \\
1 & 0
\end{pmatrix}$. Furthermore, the 
$\mathscr{S}$ operator is given as follows:
\begin{align*}
\mathscr{S}_i[r]={\frac{1}{4p(d-p)}}\sum_{s=0,1}\sum_{k=1}^{d-p}\sum_{1\le i < j \le p}\eta_s \otimes \beta^{(p)}_{ij}r_k(\eta_s \otimes\beta^{(p)}_{ij)})^t+\eta_s\otimes\gamma^{(p)}_{ij}r_k(\eta_s \otimes\gamma^{(p)}_{ij})^t,
\end{align*}
with $\gamma_{ij}^{(p)}=-\beta_{ij}^{(p)}$ as in the real case. Using $\eta_s^2=\Id$, and choosing $r=(c\Id,\ldots, c\Id)$, we obtain:
\begin{align*}
    \mathscr{S}_i[c\Id]=\frac{p-1}{p}c\Id.
\end{align*}

Using equations \eqref{MDE1} and \eqref{MDE2} the Stieltjes transform can be given by:
\begin{equation*}
    m_p(z)=\frac{-z+\sqrt{z^2-4\left(\frac{p-1}{p}\right)}}{2\left( \frac{p-1}{p}\right)}.
\end{equation*}
The inverse of this Stieltjes transform yields the distribution $\rho_p(z)=\frac{p}{2\pi (p-1)}\sqrt{\left(4\left( \frac{p-1}{p}\right)-x^2\right)_+}.$ The remainder of the proof follows as in the proof of proposition \ref{prop:assumptions_check} and \cite[3.16]{dartois2024injective}.
\end{proof}
\subsection{Complex, double scaling}
The complex case with $p=\lfloor\alpha d\rfloor $ and $ d\rightarrow \infty$ is likewise analogous to the real case. We introduce a sequence whose asymptotic behavior determines the upper bound of interest. Then, applying Laplace's method, we show that the Kac-Rice integral converges to the same asymptotic limit.
To that end, we state the following lemmas  without proof, referring the reader to the corresponding arguments in the real case:
\begin{lemma}
    Fixing $\alpha$, for every $d$ there exists a unique $\gamma_\alpha(d) \equiv\gamma_\alpha^\C(d) > 0$ satisfying:
    \begin{align}
        \frac{\log d }{2} + \beta_\C(\alpha) + \Omega(\gamma_\alpha(d))- \frac{(\gamma_\alpha(d))^2}{2} = 0,
    \end{align}
where $\beta_\C(\alpha)$ is the same $\beta$ function in the real case. Thus, the asymptotic behavior of $\gamma(d)$ given by:
\begin{align}
    \gamma_\alpha(d)= \sqrt{\log d}+\frac{\log \log d}{2 \sqrt{\log d }} + \frac{\beta(\alpha)}{\log(d)} + o\left(\frac{1}{\log d}\right)
\end{align}
\end{lemma}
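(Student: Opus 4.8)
The plan is to mirror the argument for the real double-scaling case, Lemma \ref{lem: Asympgamma}, since the defining equation has exactly the same shape; the only genuinely new input is the identification $\beta_\C(\alpha)=\beta(\alpha)$. First I would establish this identification. Writing $\beta_\C(\alpha):=\frac{1}{2p(d-p)}\log L(\alpha d,d)-\frac12\log d$ with $L$ the complex volume factor of Lemma \ref{lem:asymptotics_constant_KR_complex}, I would rerun the Stirling computation of Lemma \ref{lem: asym_real_Grass_vol} with the $\Gamma$-factors of $L$. As already observed for the fixed-$p$ volume factor, the non-$\Gamma$ factors in $L$ carry twice the power and the $\Gamma$-arguments are doubled relative to $K$; both effects multiply $\log L$ by an overall factor of two, which is cancelled exactly by normalizing with $2p(d-p)$ instead of $p(d-p)$. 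Hence $\beta_\C(\alpha)=\beta(\alpha)+\mathcal{O}(\log d/d)$, so the defining relation becomes identical to the real one and I may work with the single constant $\beta(\alpha)$.

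Next set $\Phi(u):=\frac{\log d}{2}+\beta(\alpha)+\Omega(u)-\frac{u^2}{2}$ on $(0,\infty)$, where $\Omega$ is the log-potential of the semicircle law $\mu(x)=\frac{1}{2\pi}\sqrt{(4-x^2)_+}$. Existence and uniqueness of a positive root follow from strict monotonicity: using $\Omega'(u)=u/2$ for $0<u<2$ and $\Omega'(u)=\frac{u-\sqrt{u^2-4}}{2}$ for $u>2$, one gets $\Phi'(u)=-u/2<0$ on $(0,2)$ and $\Phi'(u)=-\frac{u+\sqrt{u^2-4}}{2}<0$ on $(2,\infty)$, so $\Phi$ is strictly decreasing on all of $(0,\infty)$. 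Since $\Phi(\sqrt{\log d})=\beta(\alpha)+\Omega(\sqrt{\log d})\to+\infty$ while $\Phi(u)\to-\infty$ as $u\to\infty$ (the $-u^2/2$ term dominates the $\log u$ growth of $\Omega$), the intermediate value theorem yields a unique $\gamma_\alpha(d)>\sqrt{\log d}$.

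For the asymptotics I would use the large-argument expansion $\Omega(u)=\log u-\frac{1}{2u^2}+\mathcal{O}(u^{-4})$, valid outside $[-2,2]$, which comes from $\log|u-\lambda|=\log u+\log(1-\lambda/u)$ together with the vanishing first and unit second moments of $\mu$. The lower bound $\gamma_\alpha(d)>\sqrt{\log d}$ from the previous step guarantees we sit in the regime where this expansion applies. Writing $\gamma_\alpha(d)^2=\log d+c$ and substituting into $\Phi(\gamma_\alpha(d))=0$ gives $\beta(\alpha)+\frac12\log\log d-\frac{c}{2}+o(1)=0$, whence $c=\log\log d+2\beta(\alpha)+o(1)$; taking the square root and expanding produces the three-term expansion $\gamma_\alpha(d)=\sqrt{\log d}+\frac{\log\log d}{2\sqrt{\log d}}+\frac{\beta(\alpha)}{\sqrt{\log d}}+o(1/\sqrt{\log d})$, exactly as in Lemma \ref{lem: Asympgamma}.

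The main obstacle is not the inversion, which is routine once monotonicity and the expansion of $\Omega$ are in hand, but confirming that the complex volume constant $L$ produces precisely the same limiting function $\beta(\alpha)$ as $K$. This requires the Stirling bookkeeping of Lemma \ref{lem: asym_real_Grass_vol} to be carried out with integer-argument $\Gamma$-factors, and the cancellation of the factor of two must be checked to the relevant order in $1/d$ so that the residual error stays $\mathcal{O}(\log d/d)$ and does not contaminate the $\beta(\alpha)/\sqrt{\log d}$ correction in the final expansion.
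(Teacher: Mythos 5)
Your proposal is correct and follows essentially the same route as the paper: you identify $\beta_\C(\alpha)$ with $\beta(\alpha)$ through the Stirling analysis of the complex volume factor $L$, and then solve the same transcendental equation as in the real case — a step the paper handles purely by deferring to Lemma \ref{lem: Asympgamma} (and ultimately to \cite{dartois2024injective}), while you make the monotonicity/IVT argument and the log-potential inversion explicit. One remark: your derivation yields the correction term $\beta(\alpha)/\sqrt{\log d}$ with error $o\left(1/\sqrt{\log d}\right)$, matching the real-case lemma, which indicates that the $\beta(\alpha)/\log(d)$ and $o(1/\log d)$ written in the statement of this lemma are typos rather than a discrepancy in your proof.
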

\noindent The next lemma describes the asymptotic behavior of the Kac-Rice integral: 
\begin{lemma}
    For a fixed $\alpha$ and $\gamma_\alpha(d)$ a divergent sequence such that for some $\delta > 0$:
    \begin{align}
        \gamma_\alpha(d)= o(\exp(d^{1-\delta}))
    \end{align}
Then, the following holds:
\begin{align}
        \lim_{d \rightarrow \infty} \left[ \frac{1}{d^2 \alpha (1-\alpha)} \log \int_{\gamma(d)}^\infty e^{-\frac{N}{2}u^2} \E_\text{BHGAE}[\lvert \det(W-u) \rvert] \dd u  - \left( \Omega(\gamma(d))-\frac{\gamma^2(d)}{2} \right) \right] = 0 
\end{align}
\end{lemma}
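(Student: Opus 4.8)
The plan is to apply Laplace's method to the Kac--Rice integral exactly as in the real double-scaling argument of Lemma~\ref{lem: LimitDeterminant} (itself modeled on \cite[Lemma 3.22]{dartois2024injective}), the only genuinely new inputs being the spectral data of the complex ensemble. Write the integrand as $e^{N\,h_N(u)}$ with
$$h_N(u) = -\tfrac{u^2}{2} + \tfrac{1}{N}\log \E_{\mathrm{cBHGAE}}\bigl[\lvert\det(V_N-u)\rvert\bigr],$$
so that the claim amounts to showing $\tfrac{1}{N}\log\!\int_{\gamma_\alpha(d)}^\infty e^{Nh_N(u)}\diff u = \Omega(\gamma_\alpha(d)) - \tfrac{\gamma_\alpha(d)^2}{2} + o(1)$ after accounting for the complex normalization. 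First I would establish the determinant asymptotic: that, uniformly for $u$ in the relevant window, $\tfrac1N\log\E_{\mathrm{cBHGAE}}[\lvert\det(V_N-u)\rvert] = \Omega(u)+o(1)$, where $\Omega$ is the log-potential of the standard semicircle law. This follows because the limiting spectral measure $\rho_p$ of the cBHGAE, computed from the matrix Dyson equation in Proposition~\ref{prop:complex_assumptions_check}, converges to the semicircle $\mu$ as $p\to\infty$; the determinant concentration estimates of \cite{arous2022exponential,mckenna2024complexity} then upgrade this to the expected absolute determinant.

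Second, I would show that the integral is dominated by its lower endpoint. Since $\Omega(u) = \log u + O(u^{-2})$ for large $u$, the limiting exponent $\phi(u) := -\tfrac{u^2}{2}+\Omega(u)$ has $\phi'(u) = -u + u^{-1} + O(u^{-3}) < 0$ for $u>1$, so $\phi$ is strictly decreasing on the whole domain of integration once $\gamma_\alpha(d) \sim \sqrt{\log d} \to \infty$. Bounding $h_N$ from above by monotonicity and from below on a short interval $[\gamma_\alpha(d),\gamma_\alpha(d)+\eta]$ pins $\tfrac1N\log\int_{\gamma_\alpha(d)}^\infty e^{Nh_N}\diff u$ to $h_N(\gamma_\alpha(d)) + o(1)$. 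Collecting the factors of two that distinguish the complex case---the dimension $\dim V_N = 2p(d-p)$, the variance $\tfrac{1}{2p(d-p)}$, and the integrand prescribed by Lemma~\ref{lem:asymptotics_constant_KR_complex}---these cancel in the same bookkeeping already carried out there, and after dividing by $d^2\alpha(1-\alpha)=p(d-p)$ one recovers $\Omega(\gamma_\alpha(d))-\tfrac{\gamma_\alpha(d)^2}{2}$, which is the assertion.

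The main obstacle is the uniformity of the determinant asymptotic over a window of $u$ that \emph{grows} with $d$. Because $\gamma_\alpha(d)$ diverges (up to $o(\exp(d^{1-\delta}))$), one cannot simply quote a fixed-$u$ large-$N$ limit; instead the error term in $\tfrac1N\log\E_{\mathrm{cBHGAE}}[\lvert\det(V_N-u)\rvert] = \Omega(u)+o(1)$ must stay negligible relative to $N\phi(\gamma_\alpha(d))$ throughout, and this is precisely where the hypothesis $\gamma_\alpha(d)=o(\exp(d^{1-\delta}))$ is consumed. The block-hollow, doubled structure of $V_N$ (with its vanishing diagonal blocks and the $\pm\lambda$ symmetry of its spectrum) rules out a direct appeal to GUE determinant formulas, so the semicircle asymptotic has to be transported through $\rho_p$ together with the operator-norm confinement of Proposition~\ref{prop:complex_operator_bound}, which discards spurious contributions from atypically large eigenvalues. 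Everything else is routine and transcribes from the real case.
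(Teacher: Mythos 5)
Your proposal takes essentially the same route as the paper: the paper proves this lemma by precisely the Laplace-method argument you outline, deferring all details to the real-case Lemma~\ref{lem: LimitDeterminant} (itself resting on \cite[Lemma 3.22]{dartois2024injective}) together with the complex factor-of-two bookkeeping and the spectral data from Propositions~\ref{prop:complex_assumptions_check} and~\ref{prop:complex_operator_bound}. Your explicit attention to the uniformity of the determinant asymptotic over the growing window of $u$, where the hypothesis $\gamma_\alpha(d)=o(\exp(d^{1-\delta}))$ is used, is consistent with (and more detailed than) what the paper records.
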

\noindent The next lemma presents the asymptotic behavior of the volume of $\text{Gr}_\C(\lfloor \alpha d \rfloor,d)$:

\begin{lemma}
    Let $p=\lfloor \alpha d\rfloor$ and $L(p , d)$ be defined as in \ref{lem:asymptotics_constant_KR_complex}. Then we have:
    \begin{align}
        \lim_{d \rightarrow \infty} \frac{1}{2p(d-p)} \log L(p ,d) = \frac{3}{4}+& \frac{1}{4} \frac{\alpha}{1-\alpha} \log(\alpha)+\frac{1}{4} \frac{1-\alpha}{\alpha} \log(1-\alpha) + \frac{1}{2} \log(d)\\ \nonumber & + \frac12 \log(\alpha(1-\alpha))+ \mathcal{O}\left( \frac{\log(d)}{d}\right).
    \end{align}
\end{lemma}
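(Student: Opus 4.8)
The plan is to mirror the proof of Lemma~\ref{lem: asym_real_Grass_vol}, replacing the real volume factor $K(p,d)$ by the complex factor $L(p,d)$ of Lemma~\ref{lem:asymptotics_constant_KR_complex} and dividing by $2p(d-p)$ rather than $p(d-p)$, exactly as was done for the fixed-$p$ complex volume lemma above. First I would take logarithms and split $\frac{1}{2p(d-p)}\log L(p,d)$ into an \emph{elementary} part, coming from the powers of $\pi$ and of $\frac{p(d-p)}{2\pi}$, and a \emph{Gamma} part
$$S(p,d):=\sum_{j=1}^p\log\Gamma(j)+\sum_{j=1}^{d-p}\log\Gamma(j)-\sum_{j=1}^d\log\Gamma(j).$$
Using $p(d-p)=d^2\alpha(1-\alpha)(1+o(1))$, the elementary part is immediate: dividing $p(d-p)\log\pi+(p(d-p)+1)\log\frac{p(d-p)}{2\pi}$ by $2p(d-p)$ yields $\tfrac12\log\bigl(p(d-p)\bigr)-\tfrac12\log 2+o(1)=\log d+\tfrac12\log(\alpha(1-\alpha))-\tfrac12\log 2+o(1)$. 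All the remaining structure — in particular the reduction of the coefficient of $\log d$ from $1$ down to $\tfrac12$ and the production of the $\alpha$-dependent constants — must come from $S(p,d)$.

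For $S(p,d)$ I would use the identity $\Gamma(j)=(j-1)!$, which gives $\sum_{j=1}^n\log\Gamma(j)=\log G(n+1)$ with $G$ the Barnes $G$-function, so that $S(p,d)=\log\frac{G(p+1)\,G(d-p+1)}{G(d+1)}$. Alternatively, and in the spirit of Lemma~\ref{lem: asym_real_Grass_vol}, I would sandwich each sum between the integrals $\int_0^{n}\log\Gamma(x)\diff x$ and $\int_1^{n+1}\log\Gamma(x)\diff x$ using monotonicity of $\log\Gamma$ on $[1,\infty)$, and check that after division by $p(d-p)$ the two bounds coincide in the limit up to $\mathcal{O}(\log d/d^2)$. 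In either route I insert the leading Barnes/Stirling asymptotics $\log G(n+1)=\tfrac12 n^2\log n-\tfrac34 n^2+\tfrac{n}{2}\log(2\pi)-\tfrac1{12}\log n+\mathcal{O}(1)$ and substitute $p=\alpha d$, $d-p=(1-\alpha)d$.

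The actual work is the bookkeeping. In the three Barnes factors the $\tfrac{n}{2}\log(2\pi)$ terms cancel since $p+(d-p)-d=0$ and the $\mathcal{O}(\log n)$ terms are negligible after division, leaving the $\tfrac12 n^2\log n$ and $-\tfrac34 n^2$ contributions. Writing $\log(\alpha d)=\log\alpha+\log d$ and likewise for $(1-\alpha)d$, the $\log d$ parts of the $\tfrac12 n^2\log n$ terms combine into $-d^2\alpha(1-\alpha)\log d$, which after division by $2p(d-p)=2d^2\alpha(1-\alpha)$ contributes $-\tfrac12\log d$ and collapses the coefficient of $\log d$ to $\tfrac12$; the $\log\alpha$ and $\log(1-\alpha)$ parts supply $\tfrac14\frac{\alpha}{1-\alpha}\log\alpha+\tfrac14\frac{1-\alpha}{\alpha}\log(1-\alpha)$, and the $-\tfrac34 n^2$ terms supply the constant $\tfrac34$. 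The main obstacle is keeping these individually divergent $d^2\log d$ quantities to the correct subleading order and, above all, tracking every $\log 2$ contribution — those from the $\frac{p(d-p)}{2\pi}$ factor and those generated by the Stirling expansion of the Gamma products — so that they reorganize into the stated constant; this is precisely the factor-of-two accounting that, as in the fixed-$p$ complex volume lemma, must be reconciled against the division by $2p(d-p)$ rather than $p(d-p)$. Once these cancellations are verified and the lower integral bound is shown to share the same limit, collecting terms yields the claimed expression with error $\mathcal{O}(\log d/d)$.
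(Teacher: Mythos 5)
Your route --- a direct Stirling/Barnes evaluation of $\frac{1}{2p(d-p)}\log L(p,d)$ --- is genuinely different from the paper's proof, which simply reduces to the real-case Lemma \ref{lem: asym_real_Grass_vol} by asserting that the factor of $2$ in the Gamma arguments and in the powers of $\pi$ cancels against the $2$ in the denominator $2p(d-p)$. Your decomposition and your term-by-term bookkeeping are correct as far as they go. The genuine gap is the final assertion that the $\log 2$ contributions ``reorganize into the stated constant'': they do not, and your own intermediate results already contradict the statement. Your elementary part contributes $\log d+\tfrac12\log(\alpha(1-\alpha))-\tfrac12\log 2$, and your Gamma part contributes $-\tfrac12\log d+\tfrac14\tfrac{\alpha}{1-\alpha}\log\alpha+\tfrac14\tfrac{1-\alpha}{\alpha}\log(1-\alpha)+\tfrac34$; their sum is the right-hand side of the lemma \emph{minus} $\tfrac12\log 2$. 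Moreover, nothing in the complex case can absorb this constant: the Gamma functions have integer arguments, and $\log G(n+1)=\tfrac12 n^2\log n-\tfrac34 n^2+\tfrac{n}{2}\log(2\pi)+\mathcal{O}(\log n)$ contains no $\log 2$ at order $n^2$ (the $\log(2\pi)$ terms cancel identically since $p+(d-p)-d=0$, as you note). Contrast this with the real case: there $\sum_{i=1}^{n}\log\Gamma(i/2)=\tfrac{n^2}{4}\log(n/2)-\tfrac{3n^2}{8}+\mathcal{O}(n\log n)$, and the $\log(n/2)$ supplies, after forming the alternating sum and dividing by $p(d-p)$, exactly the $+\tfrac12\log 2$ that cancels the $-\tfrac12\log 2$ coming from the $\bigl(p(d-p)/2\pi\bigr)^{p(d-p)+1}$ prefactor. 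A completed version of your argument therefore proves
\begin{equation*}
\lim_{d\rightarrow\infty}\Bigl(\tfrac{1}{2p(d-p)}\log L(p,d)-\tfrac12\log d\Bigr)=\tfrac34+\tfrac14\tfrac{\alpha}{1-\alpha}\log\alpha+\tfrac14\tfrac{1-\alpha}{\alpha}\log(1-\alpha)+\tfrac12\log(\alpha(1-\alpha))-\tfrac12\log 2,
\end{equation*}
which is not the stated limit.

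To be fair, this defect is not yours alone: the paper's own one-line proof has the same gap, and the statement itself appears to be off by the additive constant $\tfrac12\log 2$. The claimed cancellation would require $\log L(p,d)=2\log K(p,d)+o(p(d-p))$, with $K(p,d)$ the real-case constant of Lemma \ref{lem:asymptotics_constant_KR}. But the Legendre duplication formula gives the exact identity
\begin{equation*}
\prod_{j=1}^{n}\Gamma(j)=2^{n(n-1)/2}\,\pi^{-(n+1)/2}\,\Gamma\Bigl(\tfrac{n+1}{2}\Bigr)\Bigl[\prod_{j=1}^{n}\Gamma(j/2)\Bigr]^{2},
\end{equation*}
and applying it to the three Gamma products (using $p(p-1)+(d-p)(d-p-1)-d(d-1)=-2p(d-p)$) yields $S_{\C}=2S_{\R}-p(d-p)\log 2+\mathcal{O}(d\log d)$, where $S_{\C}$ is your $S(p,d)$ and $S_{\R}$ is the analogous sum of $\log\Gamma(j/2)$'s appearing in $K(p,d)$. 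Since the non-Gamma parts of $\tfrac{1}{2p(d-p)}\log L$ and $\tfrac{1}{p(d-p)}\log K$ coincide exactly, it follows that $\tfrac{1}{2p(d-p)}\log L=\tfrac{1}{p(d-p)}\log K-\tfrac12\log 2+\mathcal{O}(\log d/d)$; the same shift affects the paper's fixed-$p$ complex volume lemma. So no blind proof of the statement as written can close, and the honest conclusion of your computation is the corrected constant displayed above.
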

\proof
This argument closely follows that of Lemma \ref{lem: asym_real_Grass_vol}. Starting from the expression for the volume of the complex Grassmannian in Equation \eqref{eq: Complex_Grass_vol},  we note that the only difference from the real case is a factor of 2 arising from the arguments of the $\Gamma$ functions and the powers $\pi$. This factor of 2 is canceled by factor 2 in the denominator of $\frac{1}{2p(d-p)}$, giving the same limiting result as in the real case.
\endproof
\noindent And finally the upper bound in this case is given by:
\begin{theorem}[Complex, skew-symmetric, double scaling, $p,d \rightarrow \infty$] For a fixed $\alpha \in (0,1) $, let $p=\lfloor \alpha d\rfloor$ and $T\in \bigwedge^p\C^d$ be a random skew-symmetric tensor with entries that are i.i.d. $\mathcal{N}_\C(0,1)$ random variables, then we have for every $\epsilon > 0 $:
    \begin{align}
        \limsup_{d \rightarrow \infty} \frac{1}{d} \log \P\left(\frac{\injnorm{T}}{\sqrt{d^2 \alpha(1-\alpha)}} >\gamma_\alpha(d)+ \frac{\epsilon}{\sqrt{\log d}}\right) < 0
    \end{align}
Let $\ket{\psi_f}:=\frac{T}{\lnorm T\rnorm_{2}}$, then 
$$    \limsup_{d\rightarrow \infty}\frac1{p(d-p)}\log \P\left(\frac{\injnorm{ \ket{\psi_f}}}{\sqrt{d^2\alpha(1-\alpha)}}> d^{-\frac{\alpha d}{2}} e^{\frac12(\alpha + (1-\alpha)\log(1-\alpha))d} (\gamma_\alpha(d)+\epsilon)\right)<0$$
\end{theorem}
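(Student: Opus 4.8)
The plan is to reduce the statement to a first-moment (annealed) computation on the complex Grassmannian, mirroring the real double-scaling argument, and then to assemble the three asymptotic lemmas stated just above together with the complex Kac-Rice bound. First I would pass from the injective norm to the real-part proxy $g_T$ via Lemma~\ref{eq-comp-def}, so that $\max g_T=\injnorm{T}$ and the event $\{\injnorm{T}/\sqrt{d^2\alpha(1-\alpha)}>\gamma_\alpha(d)+\epsilon/\sqrt{\log d}\}$ becomes the event that the Gaussian process $g_T$ admits a critical point whose value exceeds the threshold. Since $g_T$ is a centered real process symmetric under $u\mapsto -u$, I would use the same splitting as in the real case to pick up a harmless factor of $2$, and then Markov's inequality, to obtain
$$\P(\cdots)\le 2\,\E\!\left[\Crt_{g_T}\big([\gamma_\alpha(d)+\epsilon/\sqrt{\log d},\infty)\big)\right].$$

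Next I would insert the complex Kac-Rice upper bound of Lemma~\ref{lem:asymptotics_constant_KR_complex}, which bounds the right-hand side by the volume constant $L(p,d)$ times the integral $\int e^{-Nu^2}\,\E_{\mathrm{cBHGAE}}[|\det(V_N-u)|]\diff u$ with $N=2p(d-p)$. Taking logarithms and dividing by $p(d-p)$, I would feed in the three complex-case lemmas recorded above: the volume asymptotics, giving $\tfrac12\log d+\beta_\C(\alpha)$; the Laplace-type evaluation of the determinant integral, contributing $\Omega(\gamma)-\gamma^2/2$; and the defining relation of $\gamma_\alpha^\C(d)$, namely $\tfrac12\log d+\beta_\C(\alpha)+\Omega(\gamma_\alpha(d))-\tfrac12\gamma_\alpha(d)^2=0$, which cancels the leading terms exactly at the unshifted threshold. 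The residual strict negativity then comes from the shift: since $u\mapsto\Omega(u)-u^2/2$ has derivative $\approx -u\sim-\sqrt{\log d}$ near $\gamma_\alpha(d)$, replacing $\gamma_\alpha(d)$ by $\gamma_\alpha(d)+\epsilon/\sqrt{\log d}$ lowers the exponent by roughly $\epsilon$, yielding a limsup bounded above by $-\epsilon<0$.

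For the normalized state I would convert through the Hilbert-Schmidt norm. Using $\injnorm{\ket{\psi_f}}=\injnorm{T}/\lnorm T\rnorm_2$, it suffices to combine the bound on $\injnorm{T}$ with a high-probability lower bound on $\lnorm T\rnorm_2$. Lemma~\ref{lem: Frob. norm concen.} supplies both the mean $\E[\lnorm T\rnorm_2^2]\sim d^{\alpha d}e^{-(\alpha+(1-\alpha)\log(1-\alpha))d}$ and a sub-Gaussian lower-tail estimate; substituting this into the tensor bound reproduces precisely the prefactor $d^{-\alpha d/2}e^{\frac12(\alpha+(1-\alpha)\log(1-\alpha))d}$ of the statement, with the (exponentially small) failure probability of the norm estimate absorbed into the error.

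\textbf{The main obstacle.} I expect the genuinely delicate step to be the joint-limit determinant asymptotics $\E_{\mathrm{cBHGAE}}[|\det(V_N-u)|]$ and the uniform validity of the Laplace method as $p,d\to\infty$ simultaneously. Although the supporting lemmas are inherited from the real case, in the double-scaling regime both the dimension $N=2p(d-p)$ and the density of structural zeros in the Hessian grow, so one must verify that the local law of~\cite{alt2019location} and the operator-norm control of Proposition~\ref{prop:complex_operator_bound} (hence the gap and complexity assumptions of Proposition~\ref{prop:complex_assumptions_check}) persist uniformly in this limit, and that the $u$-dependent error terms in the determinant estimate stay negligible across the relevant window of $u$. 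Ensuring that these errors do not swamp the $O(\epsilon)$ gap opened by the shift $\epsilon/\sqrt{\log d}$ is where the real care is needed; the remaining bookkeeping, including the complex normalization factors of $2$, is routine once this is in place.
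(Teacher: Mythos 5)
Your proposal follows the same route as the paper: the paper's proof of this theorem is precisely a reduction to the real double-scaling argument (symmetry plus Markov to bound by the expected number of critical points, the complex Kac--Rice bound with $L(p,d)$ and the cBHGAE determinant integral, then the complex analogues of Lemma~\ref{lem: Asympgamma} and Lemma~\ref{lem: LimitDeterminant} together with the volume asymptotics to extract the $-\epsilon$ gap, and Lemma~\ref{lem: Frob. norm concen.} for the normalized state). Your closing remark about uniformity of the Laplace method and the local law in the joint limit correctly identifies where the technical weight of the cited lemmas lies, but it does not alter the argument, which matches the paper's.
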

 \noindent The proof of this theorem follows that of the real case. It relies on complex analogues of Lemma \ref{lem: Asympgamma} and Lemma \ref{lem: LimitDeterminant}, which are stated above. Given the results established above, the proof proceeds identically to the real case and follows the argument in \cite{dartois2024injective}. 
\endproof
%\newpage
\section{Numerical Simulations}
To validate our analytical results in the case of finite $p$, we use the numerical methods implemented in \cite{fitter2022estimating}. 
These simulations are carried out for both real and complex skew-symmetric tensors, with tensor orders $p=2,3,4$. As anticipated, the results in both the real and complex settings closely match, supporting the asymptotic behavior that we obtain. Although simulations are performed for both real and complex tensors of orders $p=2,3$ and $4$, we display the complex-case results only for $p=3$, as a representative example. The behavior in the $p=2$ or $4$ cases is quantitatively indistinguishable from the real-case simulations, both in terms of convergence and the limiting bounds, and is therefore omitted to avoid redundancy. This reinforces the observation that the real and complex cases exhibit identical large-$d$ behavior in all tested orders. We do not extend the simulations to higher orders $p>4$, as the memory and runtime requirements grow prohibitively with both tensor order and ambient dimension, making large-scale sampling computationally infeasible in that regime.\\ In each case, we report the normalized injective norm of a random skew-symmetric tensor $T$, defined as
$$\frac{\injnorm{T}}{\sqrt{(d-p)}}.$$
This observable is expected to remain bounded in the large-$d$ limit. 
We employ two numerical approaches for estimating the injective norm, depending on the tensor order:
\begin{itemize}
    \item For  $p = 2$, the norm reduces to the largest singular value of the associated skew-symmetric matrix and can therefore be computed exactly via singular value decomposition.
    \item For $p > 2$, we use the gradient descent to obtain the approximate maximum value 
\end{itemize}
The number of samples used in the simulations varies with $p$  and $d$, primarily due to the memory demands of storing and optimizing high-order tensors:
\begin{itemize}
    \item For $p = 2$: 200 samples for all \( d \)
    \item For $p = 3$: 100 samples for $d \leq 250$, and 20 samples for $d > 250$ 
    \item For $p = 4$: 100 samples for $d \leq 70$, and 10 samples for $d > 70$
\end{itemize}
These choices reflect the computational cost of statistical precision, especially as both the rank and ambient dimension increase. However, in the large-dimensional limit, the self-averaging behavior of the injective norm leads to suppressed statistical fluctuations, partially mitigating the effect of reduced sample sizes.\\
We compare the simulation results against the theoretical upper bounds predicted by our analysis. These are given by:
\begin{align}
    \alpha(p)=\sqrt{p} E_0(p).
\end{align}
According to our analytical predictions, the values are ${\alpha(2)=2, \,\alpha(3)\approx2.870}$ and ${\alpha(4)\approx 3.588}$, indicated by blue lines in the plots.
\begin{figure}%[H]
    \centering
    \begin{subfigure}[t]{0.48\textwidth}
        \centering
        \includegraphics[width=\textwidth]{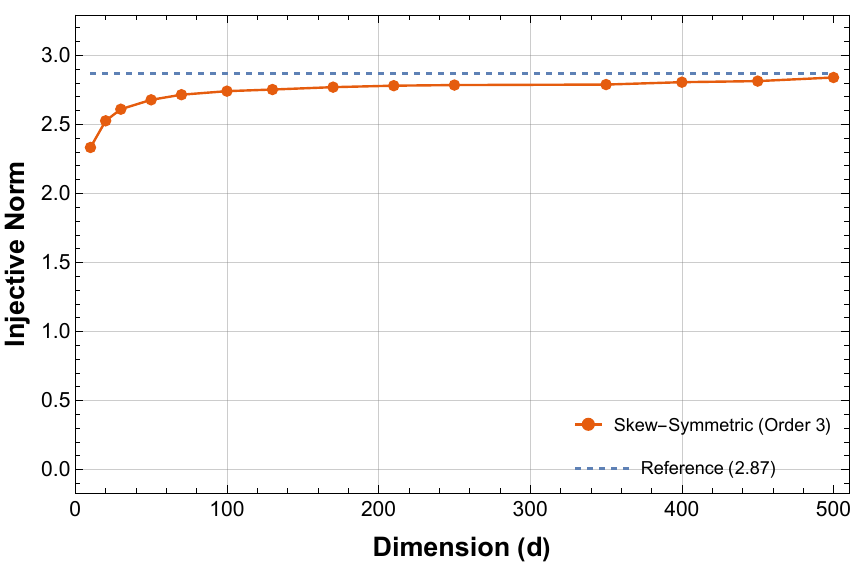}
        \caption{Order 3}
    \end{subfigure}%
    \hfill
    \begin{subfigure}[t]{0.48\textwidth}
        \centering
        \includegraphics[width=\textwidth]{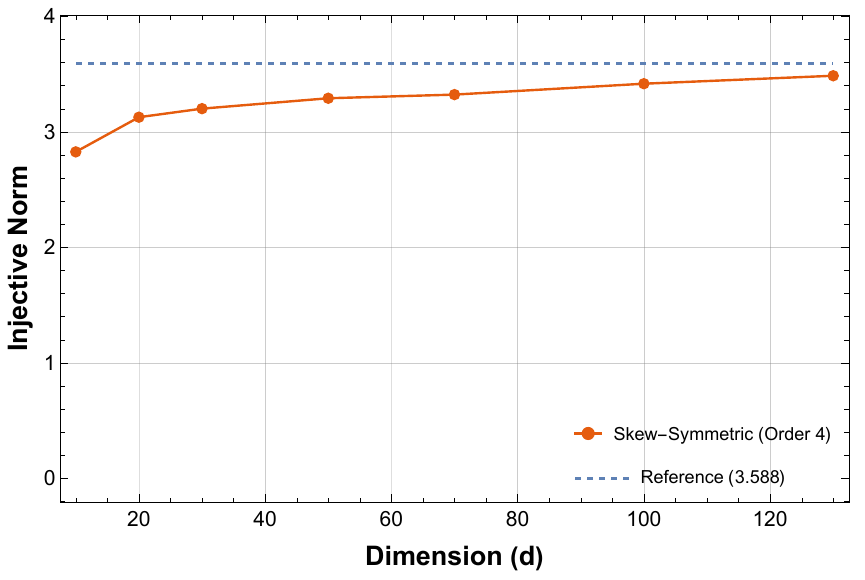}
        \caption{Order 4}
    \end{subfigure}\\[1ex]
    \begin{subfigure}[t]{0.48\textwidth}
        \centering
        \includegraphics[width=\textwidth]{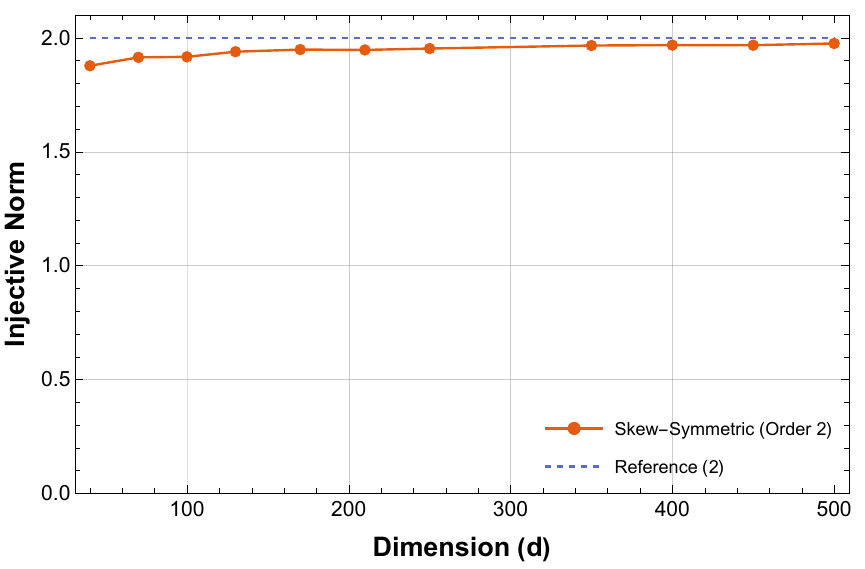}
        \caption{Order 2}
    \end{subfigure}%
    \hfill
    \begin{subfigure}[t]{0.48\textwidth}
        \centering
        \includegraphics[width=\textwidth]{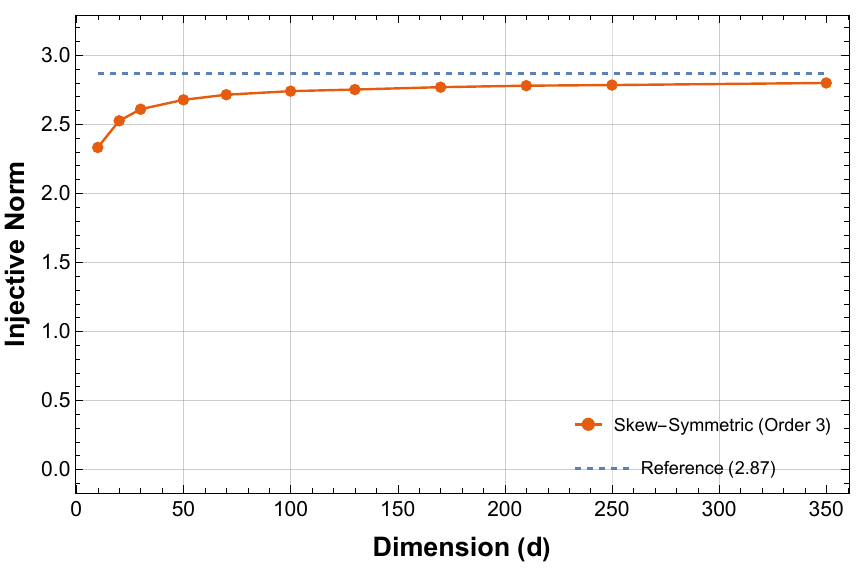}
        \caption{Order 3 (Complex)}
    \end{subfigure}
    \caption{Comparison of the values of the injective norm for random tensors of order 2, 3 and 4 as a function of the dimension d. The asymptotic analytical upper bound is indicated by the blue line.}
    \label{fig:order-comparison}
\end{figure}
The following figures show the normalized values and their corresponding ratios:\\
\begin{figure}%[H]
    \centering
    \begin{subfigure}[t]{0.48\textwidth}
        \centering
        \includegraphics[width=\textwidth]{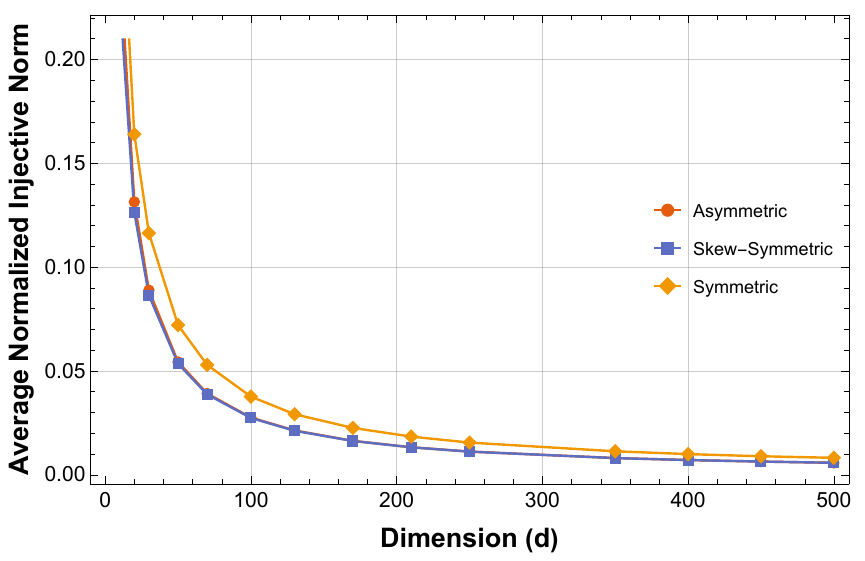}
        \caption{Injective norms of symmetric, skew- and, asymmetric order 3 normalized tensors}
    \end{subfigure}%
    \hfill
    \begin{subfigure}[t]{0.48\textwidth}
        \centering
        \includegraphics[width=\textwidth]{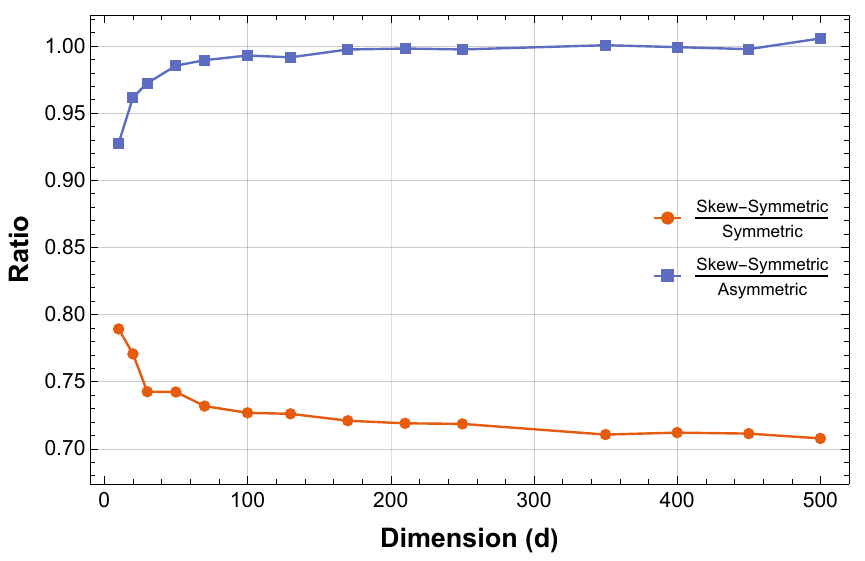}
        \caption{Ratios of normalized skew-symmetric injective norm against asymmetric and symmetric cases.}
    \end{subfigure}
    \caption{Comparison between the injective norm of normalized tensors and corresponding ratios.}
    \label{fig:normalized-ratios}
\end{figure}
\appendix
\section{Grassmannian Geometry} 
\label{AppA}
In this section, we briefly review the fundamental geometric properties of the Grassmann manifold relevant to our work. The topics discussed are mostly standard and well-known results and constructions. Our exposition closely follows the treatment provided in \cite{bendokat2024grassmann} and the references therein. 
We begin by presenting the normal coordinates on the Grassmannian around a reference point. We then discuss its structure as a homogeneous space and derive its volume, which will be used in the main text. \\
In general the Grassmannian $\text{Gr}(p,d)$ is defined as the set of all $p$-dimensional subspaces of the $d$-dimensional vector space $\mathbb{C}^d$ or $\mathbb{R}^d$.
\begin{definition}
Formally, for $\mathbb{K}=\R$ or $\C$, the Grassmannian can be defined as:
\begin{align}
    \text{Gr}(p,d)= \{ P \in \mathbb{K}^{d \times d}| P^T=P, P^2=P, \text{rank }P=p \}  .      
\end{align}
Equivalently, these projectors can be expressed for the real Grassmannian as:
\begin{align}
    \text{Gr}(p,d)=\Bigg\{O^T \begin{pmatrix}
    I_p & 0 \\
    0 & 0 \\  
\end{pmatrix}O \Bigg| O \in O(d) \Bigg\} \label{Grassmannian from North pole} ,
\end{align}
\end{definition}
\noindent with complex conjugate and $U(d)$ replacing transpose and $O(d)$ for the complex Grassmannian. The matrix $P_0=\begin{pmatrix}
    I_p & 0 \\
    0 & 0 \\ 
\end{pmatrix}$ plays a central role and will serve as our reference point on the Grassmannian in the subsequent discussion. $P_0$ represents the subspace spanned by the first $p$ basis vectors. This is reminiscent of the north pole $(1,0,\ldots,0)\in S^n$ being used in many works as a reference point.\cite{dartois2024injective}\\
\begin{lemma} \label{Volumeapp}
    The volume of the real Grassmann manifold is given as:
\begin{align}
    \text{Vol}(\text{Gr}_\R(p,d))=\frac{\text{Vol}(O(d))}{\text{Vol}(O(p))\text{Vol}(O(d-p))}.
\end{align}
The volume of the complex Grassmann manifold is given by:
\begin{align}
    \text{Vol}(\text{Gr}_\C(p,d))=\frac{\text{Vol}(U(d))}{\text{Vol}(U(p))\text{Vol}(U(d-p))}.
\end{align}
Explicitly, the volumes are:
\begin{align}
\label{eq: Real_Grass_vol}    \text{Vol}(\text{Gr}_\R(p,d))=\pi^{\frac{p(d-p)}{2}} \frac{\prod_{i=1}^{d-p}\Gamma(i/2)}{\prod_{i=p+1}^{d}\Gamma(i/2)}
\end{align}
\begin{align}
\label{eq: Complex_Grass_vol}
    \text{Vol}(\text{Gr}_\C(p,d))=\pi^{p(d-p)} \frac{\prod_{i=1}^{d-p}\Gamma(i)}{\prod_{i=p+1}^{d}\Gamma(i)}
\end{align}
\end{lemma}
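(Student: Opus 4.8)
The plan is to realize the Grassmannian as a homogeneous space, so that its volume reduces to the classical volumes of the compact groups $O(n)$ and $U(n)$, and then to compute these by an iterated sphere fibration. First I would establish the coset description. From the projector characterization \eqref{Grassmannian from North pole}, the group $O(d)$ (resp.\ $U(d)$) acts transitively on $\text{Gr}(p,d)$ via $O\mapsto O^T P_0 O$ (resp.\ by unitary conjugation), and the stabilizer of the reference projector $P_0$ is exactly the block-diagonal subgroup $O(p)\times O(d-p)$ (resp.\ $U(p)\times U(d-p)$): an orthogonal matrix fixes both the span of the first $p$ basis vectors and its orthogonal complement if and only if it is block diagonal. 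Hence $\text{Gr}_\R(p,d)\cong O(d)/\bigl(O(p)\times O(d-p)\bigr)$, and likewise in the complex case.

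Second, I would invoke the volume factorization for the resulting principal bundle. Equipping $O(d)$ with the bi-invariant metric induced by the Hilbert--Schmidt inner product on $\R^{d\times d}$, the quotient metric on $\text{Gr}(p,d)$ is the normal-coordinate metric used throughout the appendix on $T_{P_0}\text{Gr}\cong\R^{(d-p)\times p}$. Since $O(p)\times O(d-p)\hookrightarrow O(d)\to \text{Gr}_\R(p,d)$ is a Riemannian submersion with totally geodesic fibers isometric to the stabilizer, Fubini's theorem gives $\text{Vol}(O(d))=\text{Vol}(\text{Gr}_\R(p,d))\cdot\text{Vol}(O(p))\text{Vol}(O(d-p))$, which is the first displayed identity; the unitary analogue is identical.

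Third, I would compute the group volumes recursively and substitute. Iterating the fibration $O(n-1)\hookrightarrow O(n)\to S^{n-1}$ (a matrix in $O(n)$ records its first column on $S^{n-1}$ together with an element of $O(n-1)$ on the orthogonal complement) from $\text{Vol}(O(1))=2$ gives $\text{Vol}(O(n))=\prod_{k=1}^n \text{Vol}(S^{k-1})$, and inserting $\text{Vol}(S^{m-1})=2\pi^{m/2}/\Gamma(m/2)$ yields $\text{Vol}(O(n))=2^n\pi^{n(n+1)/4}/\prod_{k=1}^n\Gamma(k/2)$; the complex case uses $U(n-1)\hookrightarrow U(n)\to S^{2n-1}$, giving $\text{Vol}(U(n))=2^n\pi^{n(n+1)/2}/\prod_{k=1}^n\Gamma(k)$. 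In the ratio the powers of $2$ cancel, the exponent of $\pi$ collapses through $d(d+1)-p(p+1)-(d-p)(d-p+1)=2p(d-p)$ to $p(d-p)/2$ (and to $p(d-p)$ in the complex case, which carries an extra factor $2$ in its exponents), and the $\Gamma$-product telescopes via $\prod_{k=1}^p\Gamma(k/2)/\prod_{k=1}^d\Gamma(k/2)=1/\prod_{k=p+1}^d\Gamma(k/2)$, leaving exactly $\pi^{p(d-p)/2}\prod_{i=1}^{d-p}\Gamma(i/2)/\prod_{i=p+1}^d\Gamma(i/2)$ as in \eqref{eq: Real_Grass_vol}; the complex computation is verbatim with $i/2\to i$.

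The step requiring real care is the matching of metric normalizations. The factorization $\text{Vol}(G)=\text{Vol}(G/H)\text{Vol}(H)$ reproduces the normal-coordinate volume entering the Kac--Rice constant $K(p,d)$ only if the horizontal metric inherited from the Hilbert--Schmidt metric on $O(d)$ coincides, with no stray factor of $2$ from off-diagonal entries, with the metric on $\R^{(d-p)\times p}$ implicit in Proposition \ref{prop:upper-KR}. Pinning down this identification of tangent spaces and confirming the absence of spurious constants is the delicate point; once it is fixed, the group-volume recursions and the $\Gamma$/$\pi$ bookkeeping are routine.
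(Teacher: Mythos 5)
Your proof is correct, and it is essentially the argument the paper relies on: the paper does not prove this lemma itself but defers to \cite{nicolaescu2020lectures}, remarking only that the result is consistent with the homogeneous-space construction --- which is exactly the coset identification $\text{Gr}_\R(p,d)\cong O(d)/\bigl(O(p)\times O(d-p)\bigr)$, submersion volume factorization, and sphere-fibration recursions $\text{Vol}(O(n))=\prod_{k=1}^n\text{Vol}(S^{k-1})$, $\text{Vol}(U(n))=\prod_{k=1}^n\text{Vol}(S^{2k-1})$ that you carry out. Your $\pi$- and $\Gamma$-bookkeeping checks out in both the real and complex cases, so your write-up supplies a self-contained proof where the paper has only a citation.

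The one step you leave open --- the metric normalization --- is a genuine issue, but it resolves in favor of the stated formulas, and it is worth recording how. Equip the Lie algebras with the inner product $\langle A,B\rangle=\frac12\Tr(A^*B)$, i.e.\ \emph{half} the Hilbert--Schmidt pairing, rather than the full Frobenius one. With this choice the map $O(n)\to S^{n-1}$, $O\mapsto Oe_1$, is a Riemannian submersion onto the \emph{unit} sphere (a horizontal vector $e_1v^T-ve_1^T$ with $v\perp e_1$ has squared norm $\lvert v\rvert^2$ and image $-v$), so your sphere-product recursion holds verbatim; and simultaneously the horizontal vector $\bigl(\begin{smallmatrix}0&-B^T\\ B&0\end{smallmatrix}\bigr)$ at $P_0$ has squared norm exactly $\Tr(B^TB)$, i.e.\ the quotient metric on the Grassmannian is the standard Euclidean metric on the normal coordinates $B\in\R^{(d-p)\times p}$ of Appendix \ref{AppA}, which is precisely what the Kac--Rice constant in Proposition \ref{prop:upper-KR} requires. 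Had you instead used the full Hilbert--Schmidt metric consistently, the quotient identity would still hold, since both $\text{Vol}(O(d))/\bigl(\text{Vol}(O(p))\text{Vol}(O(d-p))\bigr)$ and $\text{Vol}(\text{Gr}_\R(p,d))$ acquire the same factor $2^{p(d-p)/2}$ (the former because $\text{Vol}_{HS}(O(n))=2^{n(n-1)/4}\prod_{k=1}^n\text{Vol}(S^{k-1})$ and the exponents combine to $p(d-p)/2$, the latter because each of the $p(d-p)$ horizontal directions is stretched by $\sqrt2$); but the explicit formulas \eqref{eq: Real_Grass_vol} and \eqref{eq: Complex_Grass_vol} are the ones in the half-Hilbert--Schmidt convention, so that is the convention under which the lemma and its use in the Kac--Rice bound are mutually consistent.
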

\noindent A proof of this lemma can be found in \cite{nicolaescu2020lectures}, consistent with the construction of the Grassmannian as a homogeneous space.\\ 
\subsection{Normal coordinates and non-compact Stiefel manifold}
In particular, one can explicitly define normal coordinates around the $P_0$, i.e. the subspace spanned by the first $p$ canonical basis vectors. The tangent-space at this point is homeomorphic to $\mathbb{R}^{(d-p)\times p}$. Thus, any point in the tangent space can be represented as a $d-p$ by $p$ matrix $B \in \mathbb{R}^{(d-p)\times p}$. It is useful to define for each matrix $B \in \mathbb{R}^{(d-p)\times p} $ a matrix $\tilde{B} \in \mathbb{R}^{d\times p}$ with $\tilde{B}= \begin{pmatrix} I_p \\ B  \\ \end{pmatrix}$, \\
Furthermore, the coordinate map is given by 
\begin{align}
    \varphi: \mathcal{U} \rightarrow \mathbb{R}^{d \times d}, \quad B \mapsto \tilde{B} (\tilde{B}^T\tilde{B})^{-1} \tilde{B}^T
\end{align}
where $\mathcal{U}$ is an open neighborhood around zero in $\mathbb{R}^{(d-p) \times p}$.
Here, each point on the Grassmannian is represented by the orthogonal projector onto the corresponding subspace.
Note that this projection matrix corresponds precisely to the subspace spanned by the columns of $\tilde{B}$.\\
Since the integrand in the Kac-Rice formula is independent of the choice of point (as shown in Lemma \ref{lem:homogeneity}) we can focus instead on the pullback of $f$ from \eqref{eq:definition_f} by the map $\varphi$, $\varphi^* f(B)=f(\varphi(B))$ in $\mathbb{R}^{(d-p) \times p}$ and evaluate it as such in the neighborhood $\mathcal{U}$, and multiply the result by the volume of the Grassmannian. In particular, $\varphi^* f(B)=f(\varphi(B))$ can be written as:
\begin{align}
    \varphi^* f_T(B) = \frac{1}{\sqrt{p(d-p)}}\frac{T(\tilde{b}_1,\dots,\tilde{b}_k)}{\sqrt{\det(\tilde{B}^T\tilde{B})}}
\end{align}
with $\tilde{b}_i$ being the $i$-th column of $\tilde{B}$.
\section{Computation of Hessian and other correlations}\label{AppB}

We begin with the \textbf{real case} and, for convenience, write $f$ instead of $\phi^*f$ under an abuse of notation. On the tangent space of the point $P_0$, let $X,Y \in \mathbb{R}^{(d-p) \times p} $ we  have $\tilde{X}= \begin{pmatrix} I_p \\ X  \\ \end{pmatrix}$ and $\tilde{Y}= \begin{pmatrix} I_p \\ Y  \\ \end{pmatrix}$ whose columns span the corresponding vector subspace of $\phi(X)$ and $\phi(Y   )$. The expectation value $\mathbb{E}\left[ f(X)f(Y) \right]$ is then given as:
\begin{align}
    \mathbb{E}\left[ f(X)f(Y)    \right] =  \frac{1}{p(d-p)}\frac{\det[\tilde{X}^t\tilde{Y}]}{\sqrt{\det[\tilde{X}^t\tilde{X}]\det[\tilde{Y}^t\tilde{Y}]}} \label{covariance_of_f}
\end{align} 

\begin{lemma} \label{lem:homogeneity}
Considering the integrand of the Kac-Rice formula derived from the function $f_T$ on the Grassmann manifold $\text{Gr}(p,d)$:
\begin{align}
    J(x) \equiv  \E(\lvert \det \text{Hess }f(x)\rvert \mathbf{1}_{f(x)\in [t_1,t_2]}:\nabla f(x)=0)\rho_{\nabla f(x)}(0),
\end{align}
then function $J: \text{Gr}(p,d) \rightarrow \R$ is  constant  across the Grassmann manifold. 
\end{lemma}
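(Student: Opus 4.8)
The plan is to exploit the homogeneity of $\text{Gr}(p,d)$ as a symmetric space under the isometric action of $O(d)$ (respectively $U(d)$ in the complex case), combined with the orthogonal invariance of the Gaussian law of $T$. Since $O(d)$ acts transitively on $\text{Gr}(p,d)$ by isometries of the canonical invariant metric, it suffices to prove that $J(g\cdot x)=J(x)$ for every $g\in O(d)$ and every $x\in\text{Gr}(p,d)$; transitivity then forces $J$ to be constant.

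First I would record the invariance of the underlying Gaussian field. The action of $g\in O(d)$ on $(\R^d)^{\otimes p}$ by $g^{\otimes p}$ is orthogonal for the Hilbert--Schmidt inner product and commutes with $P_A$, hence preserves the skew-symmetric subspace $\bigwedge^p\R^d$. Because the entries of $T$ are i.i.d. centered Gaussians up to antisymmetry, the joint density is proportional to $\exp(-\tfrac{1}{2p!}\lnorm T\rnorm_2^2)$, a function of $\lnorm T\rnorm_2^2$ alone, so $g^{\otimes p}T\overset{d}{=}T$. Since $f_T$ is a centered Gaussian field, its law is determined by the covariance \eqref{covariance_of_f}; inspecting that formula with representatives $g\tilde X,\,g\tilde Y$ of the subspaces $g\cdot x,\,g\cdot y$ gives $\det[(g\tilde X)^t(g\tilde Y)]=\det[\tilde X^t\tilde Y]$ (and likewise for the normalizers), so $\E[f_T(g\cdot x)f_T(g\cdot y)]=\E[f_T(x)f_T(y)]$ for all $x,y$. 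Equivalently, this covariance is $\tfrac{1}{p(d-p)}$ times the product of the cosines of the principal angles between the two subspaces, which is manifestly $O(d)$-invariant. Hence $f_T\circ g\overset{d}{=}f_T$ as random fields.

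Next I would transfer this field invariance to the Kac-Rice integrand. As $g$ is an isometry, its differential $dg_x\colon T_x\text{Gr}(p,d)\to T_{g\cdot x}\text{Gr}(p,d)$ is an orthogonal isomorphism of tangent spaces, and the field data pushes forward covariantly: $f(g\cdot x)$, $\nabla f(g\cdot x)=dg_x\,\nabla f(x)$, and $\text{Hess}\,f(g\cdot x)=dg_x\,\text{Hess}\,f(x)\,dg_x^{-1}$. Combined with $f_T\circ g\overset{d}{=}f_T$, the joint law of $(f,\nabla f,\text{Hess}\,f)$ at $g\cdot x$ equals its law at $x$ up to the orthogonal change of frame $dg_x$. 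I would then verify that each factor of $J$ is invariant under such an orthogonal change: the indicator $\mathbf 1_{f(x)\in[t_1,t_2]}$ is a scalar and is unaffected; $\lvert\det\text{Hess}\,f\rvert$ is unchanged under orthogonal conjugation; and the density $\rho_{\nabla f(x)}(0)$ is preserved because an orthogonal map sends a centered Gaussian vector to a centered Gaussian vector with the same covariance determinant. Assembling these gives $J(g\cdot x)=J(x)$, and transitivity yields the claim.

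The main obstacle is the bookkeeping in this last step: one must make the identification of each tangent space with $\R^{(d-p)\times p}$ compatible with the normal coordinates of Appendix \ref{AppA}, and confirm that the conditional quantity $\E(\lvert\det\text{Hess}\,f(x)\rvert\,\mathbf 1_{f(x)\in[t_1,t_2]}\mid\nabla f(x)=0)$ together with the density factor depends only on the orthogonal-frame-invariant data of the field. Once the covariance \eqref{covariance_of_f} and its first and second derivatives are written in these coordinates, as computed in Appendix \ref{AppB}, their $O(d)$-covariance makes the constancy of $J$ immediate; the complex case is identical with $U(d)$ in place of $O(d)$.
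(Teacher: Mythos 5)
Your proposal is correct and takes essentially the same route as the paper's proof: both reduce the claim to the transitive isometric action of $O(d)$ (resp. $U(d)$) on $\text{Gr}(p,d)$, observe that the covariance \eqref{covariance_of_f} is invariant under $\tilde X\mapsto O\tilde X$, $\tilde Y\mapsto O\tilde Y$, and note that $\lvert\det\text{Hess}\,f\rvert$, the condition $\nabla f=0$, and the remaining factors of $J$ are insensitive to the induced change of frame. Your version is somewhat more explicit than the paper's (notably in checking the density factor $\rho_{\nabla f(x)}(0)$ and the distributional invariance $g^{\otimes p}T\overset{d}{=}T$), but the underlying argument is the same.
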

\proof
First we define the Hessian as a $(1,1)$-tensor on the manifold. $(\text{Hess} f)_{ij} = \nabla_i (\text{grad} \, f_T)_j $  where $\nabla$ is the Levi-Civita connection on the manifold. The determinant of the Hessian, defined in this way, is invariant under any coordinate transformation. Furthermore, the condition $\nabla f\equiv \text{grad} \, f=0$ is invariant under any change of coordinate. Finally it remains to check that the covariance of function $f_T$ is invariant under the action of $O(d)$ which transfers one point of the Grassmannian to another. This is clear from the equation \eqref{covariance_of_f} and its invariance under $\tilde{X} \mapsto O\tilde{X}$ and $\tilde{Y} \mapsto O\tilde{Y}$. 
\endproof
\noindent{\bf Correlations of the random function $f$ and its derivatives:} Taking the derivative with respect to one of the coordinates yields the following tangent-function correlation:
\begin{align}
    \mathbb{E}\left[\frac{\partial f(X)}{\partial x^{(a)}_i} f(Y)\right] \Bigg|_{X=Y=0} = 0
\end{align}
The next relevant correlation is:
\begin{align}
    \mathbb{E}\left[\frac{\partial f(X)}{\partial x^{(a)}_i} \frac{ \partial f(Y)}{\partial y^{(b)}_j}\right]\Bigg|_{X=Y=0} =  \frac{1}{p(d-p)}\delta^{ab} \delta_{ij}
\end{align}
We now consider the correlation between the Hessian and the gradient:
\begin{align}
    \mathbb{E}\left[\frac{\partial^2 f(X)}{\partial x^{(a)}_i\partial x^{(b)}_j} \frac{\partial f(Y)}{\partial y^{(c)}_k}\right]\Bigg|_{X=Y=0}=0
\end{align}
Finally, we compute the covariance of the Hessian entries:
\begin{multline}\label{eq:hessian_cov}
    \mathbb{E}\left[\frac{\partial^2 f(X)}{\partial x^{(a)}_i\partial x^{(b)}_j} \frac{\partial f(Y)}{\partial y^{(c)}_k\partial y^{(d)}_l}\right]\Bigg|_{X=Y=0}=\frac{1}{p(d-p)}\times\\
    \left(\delta^{ac}\delta^{bd} \delta_{ik}\delta_{jl}+\delta^{ad}\delta^{bc} \delta_{il}\delta_{jk} -\delta^{ac}\delta^{bd} \delta_{il}\delta_{jk}-\delta^{ad}\delta^{bc} \delta_{ik}\delta_{jl}+\delta^{ab}\delta^{cd} \delta_{ij}\delta_{kl}\right)
\end{multline}
Furthermore for \textbf{the complex case}, one can show that:\\
$\E(\Re{f(X)}\Re{f(Y)})=\frac12\left(\Re{\E(f(X)\overline{f(Y)})}+\Re{\E(f(X)f(Y))} \right)$, where due to the choice of the distributions the second term vanishes.
Now we compute the correlation functions of  derivatives of $\Re{f(X)}$, following the same procedure as in the real case, up to second order. Note, however, that in the complex case, derivatives can be taken with respect to the real or imaginary parts of each variable. To clarify this, we define:
\begin{align*}
    x_i^a=\alpha_i^a+i \beta_i^a \\
    y_i^a=\sigma_i^a+i \tau_i^a
\end{align*}
Accordingly, the correlation function becomes:
\begin{align}
    \E[\Re{f(X)}\Re{f(Y)}]=\frac{1}{2} \frac{\Re{\det[\tilde{X}^\dagger Y]}}{\sqrt{\det[\tilde{X}^\dagger X]\det[\tilde{Y}^\dagger Y]}}
\end{align}
Here, we have used the same coordinate charts as in the real case to define  $X$ and $Y$.\\
As in the real case, the gradient and Hessian of the function can be expressed as centered Gaussian random variables, with the correlation structure given below. For convenience, we write $g=\Re f$ :
\begin{align}
    &\E\left[g(X)\frac{\partial}{\partial \tau_i^a} g(Y)\right]=\E\left[g(X)\frac{\partial}{\partial \sigma_i^a} g(Y)\right]=0 \\
    &\E\left[\frac{\partial}{\partial \alpha_j^b}g(X)\frac{\partial}{\partial \tau_i^a} g(Y)\right]=0 \\ &
    \E\left[\frac{\partial}{\partial \beta_j^b}g(X)\frac{\partial}{\partial \tau_i^a} g(Y)\right]=\E\left[\frac{\partial}{\partial \alpha_j^b}g(X)\frac{\partial}{\partial \sigma_i^a} g(Y)\right]= \frac{1}{2p(d-p)}\delta_{ij} \delta^{ab}\\ & \E\left[g(X)\frac{\partial}{\partial \tau_j^b} \frac{\partial}{\partial \tau_i^a} g(Y)\right]=\E\left[g(X)\frac{\partial}{\partial \sigma_j^b} \frac{\partial}{\partial \sigma_i^a} g(Y)\right]=-\frac{1}{2p(d-p)}\delta_{ij} \delta^{ab} \\
    & \E\left[\frac{\partial}{\partial \alpha_k^c}g(X)\frac{\partial}{\partial \tau_j^b} \frac{\partial}{\partial \tau_i^a} g(Y)\right]=\E\left[\frac{\partial}{\partial \beta_k^c}g(X)\frac{\partial}{\partial \sigma_j^b} \frac{\partial}{\partial \sigma_i^a} g(Y)\right]=0 \\
    & \E\left[\frac{\partial}{\partial \alpha_l^d}\frac{\partial}{\partial \alpha_k^c}g(X)\frac{\partial}{\partial \tau_j^b} \frac{\partial}{\partial \tau_i^a} g(Y)\right]=\E\left[\frac{\partial}{\partial \beta_l^d}\frac{\partial}{\partial \beta_k^c}g(X)\frac{\partial}{\partial \sigma_j^b} \frac{\partial}{\partial \sigma_i^a} f(Y)\right]  \\ \nonumber &  = \frac{1}{2p(d-p)}\left(\delta^{ac}\delta^{bd} \delta_{ik}\delta_{jl}+\delta^{ad}\delta^{bc} \delta_{il}\delta_{jk} -\delta^{ac}\delta^{bd} \delta_{il}\delta_{jk}-\delta^{ad}\delta^{bc} \delta_{ik}\delta_{jl}\right)\\
    & \E\left[ \frac{\partial}{\partial \beta_l^d}\frac{\partial}{\partial \beta_k^c}g(X)\frac{\partial}{\partial \tau_j^b} \frac{\partial}{\partial \tau_i^a} g(Y) \right] = \E\left[\frac{\partial}{\partial \alpha_l^d}\frac{\partial}{\partial \alpha_k^c}g(X)\frac{\partial}{\partial \sigma_j^b} \frac{\partial}{\partial \sigma_i^a} f(Y)\right]  \\ \nonumber &  =  \frac{1}{2p(d-p)}\left(\delta^{ac}\delta^{bd} \delta_{ik}\delta_{jl}+\delta^{ad}\delta^{bc} \delta_{il}\delta_{jk} -\delta^{ac}\delta^{bd} \delta_{il}\delta_{jk}-\delta^{ad}\delta^{bc} \delta_{ik}\delta_{jl}+\delta^{ab}\delta^{cd} \delta_{ij}\delta_{kl}\right).
\end{align}

\newpage

% Your content goes here...

% Bibliography
\bibliographystyle{alpha}
\bibliography{references}

\end{document}